\title{Doubly partially conservative sentences}
\author{Haruka Kogure\footnote{Email: kogure@stu.kobe-u.ac.jp}
\footnote{Graduate School of System Informatics, Kobe University, 1-1 Rokkodai, Nada, Kobe 657-8501, Japan.}
and Taishi Kurahashi\footnote{Email: kurahashi@people.kobe-u.ac.jp}
\footnote{Graduate School of System Informatics, Kobe University, 1-1 Rokkodai, Nada, Kobe 657-8501, Japan.}}
\date{}
\theoremstyle{plain}
\newtheorem{thm}{Theorem}[section]
\newtheorem*{thm*}{Theorem}
\newtheorem{lem}[thm]{Lemma}
\newtheorem{prop}[thm]{Proposition}
\newtheorem{cor}[thm]{Corollary}
\newtheorem{fact}[thm]{Fact}
\newtheorem*{fact*}{Fact}
\newtheorem*{prob*}{Problem}
\newtheorem*{cl*}{Claim}
\newtheorem{cl}{Claim}
\newtheorem*{scl*}{Subclaim}
\theoremstyle{definition}
\newtheorem{defn}[thm]{Definition}
\newcommand{\PA}{\mathsf{PA}}
\newcommand{\PR}{\mathrm{Pr}}
\newcommand{\Prf}{\mathrm{Prf}}
\newcommand{\Con}{\mathrm{Con}}
\newcommand{\RFN}{\mathrm{RFN}}
\newcommand{\gn}[1]{\ulcorner#1\urcorner}
\newcommand{\num}{\overline}
\newcommand{\True}{\mathrm{True}}
\newcommand{\SP}{\Sigma_n {\downarrow} \Pi_n}
\newcommand{\Cons}{\mathrm{Cons}}
\newcommand{\HCons}{\mathrm{HCons}}
\newcommand{\DCons}{\mathrm{DCons}}
\newcommand{\HDCons}{\mathrm{HDCons}}
\newcommand{\Th}{\mathrm{Th}}
\begin{document}

\maketitle

\begin{abstract}
The purpose of the present paper is to analyze several variants of Solovay's theorem on the existence of doubly partially conservative sentences. 
First, we investigate $\Theta$ sentences that are doubly $(\Gamma, \Lambda)$-conservative over $T$ for several triples $(\Theta, \Gamma, \Lambda)$. 
Among other things, we prove that the existence of a $\Delta_{n+1}(\PA)$ sentence that is doubly $(\Sigma_n, \Sigma_n)$-conservative over $T$ is equivalent to the $\Sigma_{n+1}$-inconsistency of $T$ over $\PA$. 
Secondly, we study $\Theta$ sentences that are hereditarily doubly $(\Gamma, \Lambda)$-conservative over $T$ for several triples $(\Theta, \Gamma, \Lambda)$. 
\end{abstract}

\maketitle

\section{Introduction}

Partially conservative sentences have been studied in the context of the incompleteness theorems (cf.~\cite{HP,Lin}), and also recently have been studied by the authors \cite{KOSV,KK}. 
We fix $n \geq 1$ and a consistent c.e.~extension $T$ of Peano Arithmetic $\PA$. 
For a class $\Gamma$ of sentences, a sentence $\varphi$ is said to be $\Gamma$-conservative over a theory $T$ if every $\Gamma$ sentence provable in $T + \varphi$ is already provable in $T$. 
Guaspari \cite[Theorem 2.4]{Gua} proved that there exists a $\Sigma_n$ (resp.~$\Pi_n$) sentence $\varphi$ which is $\Pi_n$-conservative (resp.~$\Sigma_n$-conservative) over $T$ and $T \nvdash \varphi$. 
It is easy to see that a sentence $\varphi$ is $\Pi_n$-conservative (resp.~$\Sigma_n$-conservative) over $T$ if and only if for any set $X$ of $\Sigma_n$ sentences (resp.~$\Pi_n$ sentences), $T + X \nvdash \neg \varphi$ whenever $T + X$ is consistent. 
So, Guaspari's sentences are independent of $T$ and are irrefutable in this strong sense. 

In this line of thought, it is natural to take into account sentences that are simultaneously unprovable and irrefutable in the strong sense, and the existence of such sentences were proved by Solovay (cf.~\cite[Theorem 2.7]{Gua}, \cite[Theorem 3.4.(3)]{Haj87}, \cite[Theorem 5.3]{Lin}, and \cite[Application 2]{Smo81}). 
Namely, there exists a $\Sigma_n$ sentence $\varphi$ such that $\varphi$ is $\Pi_n$-conservative over $T$ and $\neg \varphi$ is $\Sigma_n$-conservative over $T$. 
Solovay's theorem has been interestingly strengthened by Arana \cite{Ara}, Shavrukov \cite{Sha}, and Lindstr\"om \cite{Lin11} in the form of a chain of sentences.

Several variants of the Solovay-type theorem have also been studied. 
H\'ajek \cite[Theorem 2]{Haj79} proved that there exists a $\Delta_{n+1}(\PA)$ sentence $\varphi$ such that $\varphi$ is $\mathcal{B}(\Sigma_n)$-conservative over $T$ and $\neg \varphi$ is $\Pi_n$-conservative over $T$, where $\mathcal{B}(\Sigma_n)$ is the class of all Boolean combinations of $\Sigma_n$ sentences.  
We call such a sentence $\varphi$ a \textit{doubly $(\mathcal{B}(\Sigma_n), \Pi_n)$-conservative sentence}. 
Doubly $(\Pi_1, \Pi_1)$-conservative sentences are known as Orey sentences in the context of interpretability \cite{Ore} (see also \cite{Lin}). 
Recently, the second author and Visser \cite[Theorem 3.9]{KV} proved the existence of a $\Pi_{n+1}$ consistency statement that is doubly $(\Pi_n, \Pi_n)$-conservative. 
On the other hand, for some classes $\Gamma$ and $\Lambda$ of sentences, the situation where doubly $(\Gamma, \Lambda)$-conservative sentences do not exist is discussed in Lindstr\"om's textbook \cite[Exercise 5.6]{Lin} (see Fact \ref{Fact} below).

Guaspari \cite[Theorem 2.4]{Gua} actually proved the existence of sentences enjoying a stronger hereditary conservation property.
That is, it is proved that there exists a $\Sigma_n$ (resp.~$\Pi_n$) sentence $\varphi$ such that for any theory $U$ with $T \vdash U \vdash \PA$, $\varphi$ is $\Pi_n$-conservative (resp.~$\Sigma_n$-conservative) over $U$ and $T \nvdash \varphi$. 
Smory\'nski \cite[p.~343]{Smo81} mentioned that the Solovay-type sentences with this subtheory property had not been analyzed. 
Lindstr\"om \cite[Exercise 5.5,(c)]{Lin} described that hereditarily doubly $(\Pi_n, \Sigma_n)$-conservative $\Sigma_n$ sentences do not always exist.
Recently, the authors introduced the notion of $\SP$-conservativity (see Definition \ref{cons} below) and proved that $T$ is $\SP$-conservative over $\PA$ if and only if there exists a $\Sigma_n$ sentence that is hereditarily doubly $(\Pi_n, \Sigma_n)$-conservative over $T$ \cite[Theorem 5.7]{KK}. 

The purpose of the present paper is to further analyze several variants of the Solovay-type theorem and its hereditary version in order to clearly understand the situation surrounding doubly partially conservative sentences. 
In Section \ref{Sec:DCS}, we study $\Theta$ sentences that are doubly $(\Gamma, \Lambda)$-conservative for several triples $(\Theta, \Gamma, \Lambda)$. 
Tables \ref{table0}, \ref{table1}, and \ref{table2} below summarize the situations of the existence of such a sentence for $\Theta = \Sigma_n \land \Pi_n$, $\mathcal{B}(\Sigma_n)$, and $\Delta_{n+1}(\PA)$, respectively. 
In particular, Theorem \ref{Consis_Cons} that is a strengthening of Lindstr\"om's observation \cite[Exercise 5.6.(c)]{Lin} may be interesting: We prove that there exists a $\Delta_{n+1}(\PA)$ sentence that is doubly $(\Sigma_n, \Sigma_n)$-conservative over $T$ if and only if $T$ is not $\Sigma_{n+1}$-consistent over $\PA$. 
Here, $\Sigma_{n+1}$-consistency over $\PA$ is a newly introduced relativized version of Kreisel's $n$-consistency \cite{Kre57} that is a stratification of G\"odel's $\omega$-consistency. 
In Section \ref{Sec:HDCS}, we investigate $\Theta$ sentences that are hereditarily doubly $(\Gamma, \Lambda)$-conservative. 
Tables \ref{table3}, \ref{table4}, \ref{table5}, \ref{table6}, and \ref{table7} summarize the situations of the existence of such a sentence for $\Theta = \Sigma_n$, $\Sigma_n \land \Pi_n$, $\mathcal{B}(\Sigma_n)$, $\Delta_{n+1}(\PA)$, and $\Sigma_{n+1}$, respectively.

\section{Preliminaries}\label{Sec:Pre}

Throughout the present paper, we may assume that $T$ always denotes a consistent c.e.~extensions $T$ of Peano Arithmetic $\PA$ in the language $\mathcal{L}_A$ of first-order arithmetic. 
We also assume that $n$ denotes a natural number with $n \geq 1$. 
Let $\omega$ denote the set of all natural numbers. 
Let $\mathbb{N}$ denote the $\mathcal{L}_A$-structure that is the standard model of first-order arithmetic, whose domain is $\omega$. 
For each $k \in \omega$, let $\num{k}$ denote the numeral of $k$. 
For each $\mathcal{L}_A$-formula $\varphi$, let $\gn{\varphi}$ denote the numeral of the fixed G\"{o}del number of $\varphi$.
A formula is called $\Sigma_n \wedge \Pi_n$ (resp.~$\Sigma_n \lor \Pi_n$) iff it is of the form $\sigma \wedge \pi$ (resp.~$\sigma \lor \pi$) for some $\Sigma_n$ formula $\sigma$ and $\Pi_n$ formula $\pi$. 
A formula is called $\mathcal{B}(\Sigma_n)$ iff it is a Boolean combination of $\Sigma_n$ formulas.
For a theory $U$, a formula is called $\Delta_n(U)$ iff it is $U$-provably equivalent to both some $\Sigma_n$ formula and some $\Pi_n$ formula. 
We assume that the set of all $\Gamma$ sentences is denoted $\Gamma$. 

Here, we introduce the useful witness comparison notation. 
For formulas $\varphi \equiv \exists x \alpha (x)$ and $\psi \equiv \exists x \beta(x)$, we define the formulas $\varphi \prec \psi$ and $\varphi \preccurlyeq \psi$ as follows: 
\begin{itemize}
\item 
$\varphi \prec \psi : \equiv \exists x \, (\alpha(x) \wedge \forall y \leq x \, \neg \beta(y))$, 
\item 
$\varphi \preccurlyeq \psi : \equiv \exists x \, (\alpha(x) \wedge \forall y < x \, \neg \beta(y))$.
\end{itemize}
We may apply the witness comparison notation to formulas of the forms $\neg \forall x \varphi(x)$ and $\exists x \alpha(x) \vee \exists x \beta(x)$ by considering $\exists x \neg \varphi(x)$ and $\exists x (\alpha(x) \vee \beta(x))$, respectively. 
We will freely use the following proposition without referring to it: 

\begin{prop}[cf.~Lindstr\"om {\cite[Lemma 1.3]{Lin}}]\label{wc}
For any formulas $\varphi \equiv \exists x \alpha(x)$ and $\psi \equiv \exists x \beta(x)$, the following clauses hold: 
\begin{enumerate}
\item 
$\PA \vdash \varphi \prec \psi \to \varphi \preccurlyeq \psi$.
\item 
$\PA \vdash \varphi \vee \psi \to (\varphi \prec \psi) \vee (\psi \preccurlyeq \varphi)$.
\item 
$\PA \vdash \neg \bigl((\varphi \prec \psi) \wedge (\psi \preccurlyeq \varphi) \bigr)$.
\item 
$\PA \vdash \varphi \wedge \neg \psi \to \varphi \prec \psi$.
\end{enumerate}
\end{prop}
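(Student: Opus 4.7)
The whole proposition amounts to a direct verification from the definitions of $\prec$ and $\preccurlyeq$, and I plan to carry it out inside $\PA$ one clause at a time. The only non-trivial ingredient will be a single application of the least number principle in clause (2); the remaining three clauses unpack essentially by inspection.

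Clauses (1) and (4) are straightforward weakenings. For (1), any $x$ witnessing $\varphi \prec \psi$ also witnesses $\varphi \preccurlyeq \psi$, because $\forall y \leq x \, \neg \beta(y)$ entails $\forall y < x \, \neg \beta(y)$. For (4), under $\neg \psi$ we have $\forall y \, \neg \beta(y)$, so any witness for $\varphi$ automatically witnesses $\varphi \prec \psi$. For (3), I would argue by contradiction: from witnesses $x$ for $\varphi \prec \psi$ and $x'$ for $\psi \preccurlyeq \varphi$, the conjunction of $\forall y \leq x \, \neg \beta(y)$ with $\beta(x')$ forces $x < x'$, and then $\forall y < x' \, \neg \alpha(y)$ contradicts $\alpha(x)$.

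The only clause requiring a small idea is (2). Here I plan to apply the least number principle to the formula $\alpha(x) \vee \beta(x)$: assuming $\varphi \vee \psi$, pick the least $x_0$ with $\alpha(x_0) \vee \beta(x_0)$. If $\alpha(x_0) \wedge \neg \beta(x_0)$, then combining the minimality of $x_0$ with $\neg \beta(x_0)$ gives $\forall y \leq x_0 \, \neg \beta(y)$, whence $\varphi \prec \psi$. Otherwise $\beta(x_0)$ holds, and minimality yields $\forall y < x_0 \, \neg \alpha(y)$, whence $\psi \preccurlyeq \varphi$. The only thing to watch throughout is the interplay of strict versus non-strict inequalities in the two witness-comparison notations -- precisely the feature that makes clause (3) go through -- but beyond this bookkeeping no real obstacle is expected.
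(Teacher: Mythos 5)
Your proof is correct: all four clauses check out, and the use of the least number principle for $\alpha(x)\vee\beta(x)$ in clause (2), together with the case split on $\beta(x_0)$, is exactly the standard argument, with the strict/non-strict bookkeeping handled properly. The paper itself gives no proof of this proposition, deferring entirely to Lindstr\"om's textbook, so there is no in-paper argument to compare against; your verification matches the standard one found there.
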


Let $\Prf_T(x,y)$ be a $\Delta_1(\PA)$ formula naturally expressing that ``$y$ is a $T$-proof of $x$''. 
Let $\PR_T(x)$ be the $\Sigma_1$ formula $\exists y \Prf_T(x,y)$.
For each $\Gamma \in \{\Sigma_n, \Pi_n\}$,
let $\Prf_{T}^{\Gamma}(x,y)$ denote the \textit{relativized proof predicate of $T$ with respect to $\Gamma$} defined as follows (See Lindstr\"{o}m \cite{Lin}): 
\[
\Prf_{T}^{\Gamma}(x,y) \equiv \exists u \leq y\, 
\bigl(\Gamma(u) \wedge \True_{\Gamma}(u) \wedge \Prf_T(u \dot{\to} x, y) \bigr).
\]
Here, $\Gamma(u)$ is a $\Delta_1(\PA)$ formula naturally expressing that ``$u$ is a $\Gamma$ sentence'', $\True_{\Gamma}(x)$ is a $\Gamma$ formula expressing that ``$x$ is a true $\Gamma$ sentence'' (cf.~H\'{a}jek and Pudl\'ak \cite{HP}), and $u \dot{\to} x$ is a primitive recursive term corresponding to the primitive function calculating the G\"{o}del number of $\varphi \to \psi$ from those of $\varphi$ and $\psi$.
We also introduce the relativized proof predicates with respect to $\Sigma_n \land \Pi_n$ and $\Delta_n$ as follows (cf.~H\'{a}jek \cite{Haj79} and Kogure and Kurahashi \cite{KK}): 
\begin{align*}
\Prf_{T}^{\Sigma_n \wedge \Pi_n}(x,y) : \equiv \exists u, v \leq y \, \bigl(\Sigma_n(u) & \wedge \Pi_n(v) \wedge \True_{\Sigma_n}(u) \\ & \wedge \True_{\Pi_n}(v) \wedge \Prf_T(u \dot{\wedge} v \dot{\to} x, y) \bigr).
\end{align*}
\begin{align*}
\Prf_{T}^{(\Delta_n, \Sigma_n)} (x,y) : \equiv & \exists u, v, w \leq y \, \Bigl(\Sigma_n(u) \wedge \Pi_n(v) \\
& \quad \wedge \Prf_{T}(u \dot{\leftrightarrow} v, w) \wedge \True_{\Sigma_n}(u) \wedge \Prf_{T}(u \dot{\to} x, y)\Bigr). 
\end{align*}
\begin{align*}
\Prf_{T}^{(\Delta_n, \Pi_n)} (x,y) : \equiv & \exists u, v, w \leq y \, \Bigl(\Sigma_n(u) \wedge \Pi_n(v) \\
& \quad \wedge \Prf_{T}(u \dot{\leftrightarrow} v, w) \wedge \True_{\Pi_n}(v) \wedge \Prf_{T}(v \dot{\to} x, y)\Bigr). 
\end{align*}
For $\Gamma \in \{\Sigma_n, \Pi_n\}$, the formulas $\Prf_{T}^{\Gamma}(x,y)$, $\Prf_{T}^{\Sigma_n \wedge \Pi_n}(x,y)$, $\Prf_{T}^{(\Delta_n, \Sigma_n)} (x,y)$, and $\Prf_{T}^{(\Delta_n, \Pi_n)} (x,y)$ are $\PA$-provably equivalent to $\Gamma$, $\Sigma_{n+1}$, $\Sigma_n$, and $\Pi_n$ formulas, respectively.
For any $\Lambda$, we define $\PR_{T}^{\Lambda}(x) : \equiv \exists y \, \Prf_{T}^{\Lambda}(x,y)$. 
We will also freely use the following proposition without referring to it:  

\begin{prop}[cf.~Lindstr\"{o}m \cite{Lin} and Kogure and Kurahashi \cite{KK}]\label{small_rfn}
Let $\Gamma$ denote one of $\Sigma_n$, $\Pi_n$, and $\Sigma_n \land \Pi_n$. 
Let $\Lambda$ denote one of $\Sigma_n$, $\Pi_n$, $\Sigma_n \land \Pi_n$, $(\Delta_n, \Sigma_n)$, and $(\Delta_n, \Pi_n)$. 
\begin{enumerate}
\item 
For any sentence $\varphi$ and $p \in \omega$, we have $T \vdash \Prf_{T}^{\Lambda}(\gn{\varphi}, \num{p}) \to \varphi$.
\item 
For any $\Gamma$ sentence $\gamma$, if $T + \gamma \vdash \varphi$, then $\PA + \gamma \vdash \Prf_{T}^{\Gamma}(\gn{\varphi}, \num{p})$ for some $p \in \omega$.
\item 
For any subtheory $U$ of $T$ and any $\Delta_n(U)$ sentence $\delta$, if $U + \delta \vdash \varphi$, then $\PA + U + \delta \vdash \Prf_{T}^{(\Delta_n, \Sigma_n)}(\gn{\varphi}, \num{p})$ for some $p \in \omega$ and $\PA + U + \delta \vdash \Prf_{T}^{(\Delta_n, \Pi_n)}(\gn{\varphi}, \num{q})$ for some $q \in \omega$.
\end{enumerate}
\end{prop}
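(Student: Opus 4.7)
The plan is to prove the three clauses separately, leveraging two standard facts: Tarski's partial truth lemma, which gives $\PA \vdash \True_\Gamma(\gn{\sigma}) \leftrightarrow \sigma$ for each standard $\Gamma$ sentence $\sigma$ with $\Gamma \in \{\Sigma_n, \Pi_n\}$, together with the $\Sigma_n$-/$\Pi_n$-completeness of $\PA$; and the observation that when the second argument is a standard numeral $\num{p}$, the bounded quantifiers in $\Prf_T^\Lambda(\cdot, \num{p})$ range over a finite, computable set of standard codes, so the formula is $\PA$-provably equivalent to a finite Boolean combination whose atoms are either $\True_\Gamma$-evaluations of fixed standard sentences or decidable statements about standard $T$-proofs.

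For clause (1), I fix the standard $p$ and unfold $\Prf_T^\Lambda(\gn{\varphi}, \num{p})$ as the disjunction, over those standard codes $u$ (and $v$, in the $\Sigma_n \wedge \Pi_n$ and $(\Delta_n, \cdot)$ cases) bounded by $p$ that satisfy the purely decidable conjuncts (such as $\Gamma(u)$ and $\Prf_T(u \dot{\to} \gn{\varphi}, p)$) in $\mathbb{N}$, of the remaining $\True$-clauses. Each such $u$ codes an actual $\Gamma$ sentence $\sigma$ for which $T \vdash \sigma \to \varphi$ genuinely holds, so by the truth lemma $T$ proves each disjunct implies $\varphi$. For clause (2), I extract a standard $T$-proof of $\gamma \to \varphi$, pick $p$ large enough to dominate $\gn{\gamma}$ and that proof's code, verify $\Prf_T(\gn{\gamma \to \varphi}, \num{p})$ in $\PA$ by $\Delta_1$-completeness, and obtain $\True_\Gamma(\gn{\gamma})$ inside $\PA + \gamma$ by the appropriate $\Sigma_n$-/$\Pi_n$-completeness; witnessing the existential quantifier with $\gn{\gamma}$ (and its $\Pi_n$ companion when $\Gamma = \Sigma_n \wedge \Pi_n$) closes the argument.

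For clause (3), let $\sigma$ and $\pi$ be $\Sigma_n$ and $\Pi_n$ sentences with $U \vdash \sigma \leftrightarrow \delta$ and $U \vdash \pi \leftrightarrow \delta$; combining yields $U \vdash \sigma \leftrightarrow \pi$ with some standard proof $w$, and $U + \delta \vdash \varphi$ gives $U \vdash \sigma \to \varphi$ and $U \vdash \pi \to \varphi$ with standard $T$-proofs (as $U \subseteq T$). Choose $p$ majorizing every relevant code. Inside $\PA + U + \delta$ I derive $\sigma$ from the $U$-provable implication $\delta \to \sigma$, and hence $\True_{\Sigma_n}(\gn{\sigma})$ by the truth lemma; $\Prf_T(\gn{\sigma \dot{\leftrightarrow} \pi}, \num{w})$ and $\Prf_T(\gn{\sigma \dot{\to} \varphi}, \num{p})$ follow from $\Delta_1$-completeness. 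Together these witness $\Prf_T^{(\Delta_n, \Sigma_n)}(\gn{\varphi}, \num{p})$, and the $(\Delta_n, \Pi_n)$ half is symmetric, using $\True_{\Pi_n}(\gn{\pi})$ and the $U$-proof of $\delta \to \pi$ in place of $\delta \to \sigma$.

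There is no substantive obstacle: the whole proposition is a bookkeeping exercise in provable $\Sigma_1$-completeness and partial truth. The one mild nuisance is presenting the five variants of $\Lambda$ in clause (1) uniformly; I would spell out one representative case ($\Lambda = \Sigma_n$) in detail and remark that the other four ($\Pi_n$, $\Sigma_n \wedge \Pi_n$, $(\Delta_n, \Sigma_n)$, $(\Delta_n, \Pi_n)$) follow by the same reduction, with the only changes being the number of witness variables and the choice of truth predicate(s) applied to the standardly-coded $\Gamma$-components.
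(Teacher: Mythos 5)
Your proof is correct and is exactly the standard argument: the paper does not actually prove Proposition \ref{small_rfn} but states it with a pointer to Lindstr\"om \cite{Lin} and to \cite{KK}, and your method --- unfolding the bounded quantifiers at a standard numeral into a finite disjunction, deciding the $\Delta_1$ conjuncts by provable $\Delta_1$-completeness, and handling the remaining $\True_\Gamma$-conjuncts via the Tarski truth lemma $\PA \vdash \sigma \leftrightarrow \True_\Gamma(\gn{\sigma})$ --- is precisely what those sources do. The one point worth stating explicitly is that in clauses (2) and (3) you need a single standard $p$ that simultaneously bounds the codes of the witnesses (and, in clause (3), of the proof $w$ of $\sigma \leftrightarrow \pi$) and is itself the code of a $T$-proof of the relevant implication, which requires proof padding or a Gödel numbering under which a proof's code majorizes the codes of the formulas occurring in it; this is a standard convention that the paper also takes for granted.
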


From now on, we assume that $\Theta$ always denotes one of $\Sigma_n$, $\Sigma_n \land \Pi_n$, $\mathcal{B}(\Sigma_n)$, $\Delta_{n+1}(\PA)$, and $\Sigma_{n+1}$. 
Also, $\Gamma$ and $\Lambda$ always denote one of $\Delta_n$, $\SP$, $\Sigma_n$, $\Pi_n$, $\Sigma_n \land \Pi_n$, and $\mathcal{B}(\Sigma_n)$.

\begin{defn}[$\Gamma$-conservativity]\label{cons}
Let $U$ and $V$ be any theories. 
\begin{itemize}
    \item For $\Gamma \notin \{\Delta_n, \SP\}$, we say that $U$ is \textit{$\Gamma$-conservative over $V$} iff for any $\Gamma$ sentence $\gamma$, if $U \vdash \gamma$, then $V \vdash \gamma$. 

    \item We say that $U$ is \textit{$\SP$-conservative over $V$} iff for any $\sigma \in \Sigma_n$ and $\pi \in \Pi_n$, if $U \vdash \sigma \land \pi$, then $V \vdash \sigma \lor \pi$. 

    \item We say that $U$ is \textit{$\Delta_n$-conservative over $V$} iff for any sentence $\delta$ which is $\Delta_n(\Th(U) \cap \Th(V))$, if $U \vdash \delta$, then $V \vdash \delta$. 
    
    \item We say that a sentence $\varphi$ is \textit{$\Gamma$-conservative over $U$} iff $U + \varphi$ is $\Gamma$-conservative over $U$. 

    \item Let $\Cons(\Gamma, U)$ denote the set of all sentences that are $\Gamma$-conservative over $U$. 
\end{itemize}
\end{defn}

The notion of $\SP$-conservativity was introduced by the authors \cite{KK}. 

\begin{fact}[{\cite[Proposition 5.2]{KK}}]\label{Fact_DA}
Let $U$ and $V$ be any theories.
\begin{enumerate}
    \item If $U$ is $\Sigma_n$-conservative over $V$, then $U$ is $\SP$-conservative over $V$. 

    \item If $U$ is $\Pi_n$-conservative over $V$, then $U$ is $\SP$-conservative over $V$. 

    \item If $U$ is $\SP$-conservative over $V$, then $U$ is $\Delta_n$-conservative over $V$. 
\end{enumerate}
\end{fact}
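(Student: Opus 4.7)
The plan is to verify each clause by a direct, essentially definitional argument. Clauses (1) and (2) should follow immediately by dropping one conjunct of an $\Sigma_n \wedge \Pi_n$ theorem of $U$ and then invoking the given $\Sigma_n$- or $\Pi_n$-conservativity. Clause (3) is the substantive one: I would unpack $\Delta_n(\Th(U) \cap \Th(V))$ so as to convert a $\Delta_n$ theorem of $U$ into a $\Sigma_n \wedge \Pi_n$ theorem of $U$ and then back to a theorem of $V$ by way of $\SP$-conservativity.

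For clause (1), suppose $U \vdash \sigma \wedge \pi$ with $\sigma \in \Sigma_n$ and $\pi \in \Pi_n$. Since $U \vdash \sigma$ and $\sigma$ is $\Sigma_n$, the hypothesis gives $V \vdash \sigma$, so $V \vdash \sigma \vee \pi$. Clause (2) is perfectly symmetric, using the $\Pi_n$ conjunct $\pi$ instead.

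For clause (3), assume $U$ is $\SP$-conservative over $V$, let $\delta$ be a $\Delta_n(\Th(U) \cap \Th(V))$ sentence, and suppose $U \vdash \delta$. By definition, there exist $\sigma \in \Sigma_n$ and $\pi \in \Pi_n$ such that the equivalences $\delta \leftrightarrow \sigma$ and $\delta \leftrightarrow \pi$ both belong to $\Th(U) \cap \Th(V)$, hence are simultaneously provable in $U$ and in $V$. From $U \vdash \delta$ I then derive $U \vdash \sigma \wedge \pi$; applying $\SP$-conservativity yields $V \vdash \sigma \vee \pi$; and since $V$ also proves $\sigma \to \delta$ and $\pi \to \delta$, one concludes $V \vdash \delta$.

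I do not expect a genuine obstacle here: the only conceptual point is to notice that the witnesses $\sigma, \pi$ furnished by the definition of $\Delta_n(\Th(U) \cap \Th(V))$ are automatically equivalent to $\delta$ in \emph{both} theories, which is precisely what is needed to pass through $\SP$-conservativity in the one direction and recover $\delta$ in the other. No further computation or coding is required.
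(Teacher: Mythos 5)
Your proof is correct: clauses (1) and (2) follow by weakening one conjunct, and for clause (3) the key observation that the witnesses $\sigma, \pi$ are equivalent to $\delta$ in both $U$ and $V$ (since $\Th(U) \cap \Th(V)$ is deductively closed) is exactly what makes the round trip through $\SP$-conservativity work. The paper states this as a Fact imported from \cite[Proposition 5.2]{KK} without reproducing a proof, and your argument is the natural definitional one that the cited result rests on.
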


Figure \ref{Fig0} visualizes the implications between the variations of $\Gamma$-conservativity. 

\begin{figure}[ht]
\centering
\begin{tikzpicture}
\node (D+) at (0,1) {$\Delta_{n+1}$-cons.};
\node (B) at (0,0) {$\mathcal{B}(\Sigma_{n})$-cons.};
\node (and) at (0,-1) {$\Sigma_{n} \land \Pi_{n}$-cons.};
\node (S) at (2,-2) {$\Pi_{n}$-cons.};
\node (P) at (-2,-2) {$\Sigma_{n}$-cons.};
\node (SP) at (0,-3) {$\SP$-cons.};
\node (D) at (0,-4) {$\Delta_{n}$-cons.};

\draw [->, double] (D+)--(B);
\draw [->, double] (B)--(and);
\draw [->, double] (and)--(S);
\draw [->, double] (and)--(P);
\draw [->, double] (S)--(SP);
\draw [->, double] (P)--(SP);
\draw [->, double] (SP)--(D);

\end{tikzpicture}
\caption{The implications between the variations of partial conservativity}\label{Fig0}
\end{figure}
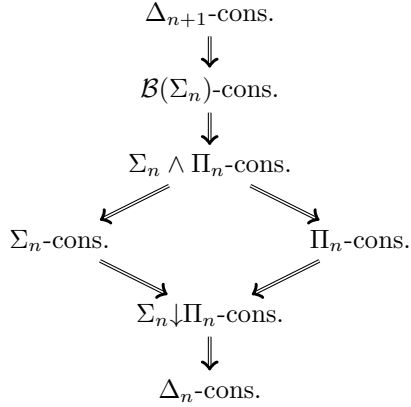

The following hereditary $\Gamma$-conservativity was introduced by Guaspari \cite{Gua}. 

\begin{defn}[Hereditary $\Gamma$-conservativity]\label{HCons}\leavevmode
\begin{itemize}
    \item We say that a sentence $\varphi$ is \textit{hereditarily $\Gamma$-conservative over $T$} iff $\varphi \in \bigcap \{\Cons(\Gamma, U) \mid T \vdash U \vdash \PA\}$. 
    
    \item Let $\HCons(\Gamma, T)$ be the set of all sentences that are hereditarily $\Gamma$-conservative over $T$. 
\end{itemize}
\end{defn}

In the sense of Definition \ref{HCons}, throughout the present paper, we assume that the term ``subtheory'' always denotes a subtheory that includes $\PA$.

\section{Doubly conservative sentences}\label{Sec:DCS}

In this section, we study $\Theta$ sentences that are doubly $(\Gamma, \Lambda)$-conservative over $U$.  

\begin{defn}[Double $(\Gamma, \Lambda)$-conservativity]
Let $U$ be any theory. 
\begin{enumerate}
    \item We say that a sentence $\varphi$ is \textit{doubly $(\Gamma, \Lambda)$-conservative} over $U$ iff $\varphi \in \Cons(\Gamma, U)$ and $\neg \varphi \in \Cons(\Lambda, U)$. 

    \item Let $\DCons(\Gamma, \Lambda; U)$ denote the set of all sentences that are doubly $(\Gamma, \Lambda)$-conservative over $U$. 
    \end{enumerate}
\end{defn}

Known results on the existence of a $\Theta$ sentence that is doubly $(\Gamma, \Lambda)$-conservative over $T$ are summarized as follows. 

\begin{fact}\label{Fact}\leavevmode
\begin{enumerate}
    \item $\Sigma_n \cap \DCons(\Pi_n, \Sigma_n; T) \neq \emptyset$. \hfill \textup{(Solovay (cf.~Lindstr\"om {\cite[Theorem 5.3]{Lin}}))}

    \item $\Delta_{n+1}(\PA) \cap \DCons(\Pi_n, \mathcal{B}(\Sigma_n); T) \neq \emptyset$. \hfill \textup{(H\'ajek \cite[Theorem 2]{Haj79})}

    \item If $T$ is $\Sigma_n$-sound, then $\Delta_{n+1}(\PA) \cap \DCons(\Sigma_n, \Sigma_n; T) = \emptyset$. \hfill \textup{(Lindstr\"om {\cite[Exercise 5.6.(b)]{Lin}})}

    \item $\mathcal{B}(\Sigma_n) \cap \DCons(\Sigma_n, \Sigma_n; T) = \mathcal{B}(\Sigma_n) \cap \DCons(\Pi_n, \Pi_n; T) = \emptyset$. \hfill \textup{(Lindstr\"om {\cite[Exercise 5.6.(c)]{Lin}})}

\end{enumerate}
\end{fact}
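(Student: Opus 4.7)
The Fact assembles four results, and I would split my proof plan accordingly: (1) and (2) are positive diagonal constructions, (3) is a short semantic argument using $\Sigma_n$-soundness, and (4) is a combinatorial argument exploiting the structure of $\mathcal{B}(\Sigma_n)$.

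For (1), I plan to construct a $\Sigma_n$ sentence $\varphi$ via the diagonal lemma so that
\[
\PA \vdash \varphi \leftrightarrow \bigl(\exists y\, \Prf_T^{\Sigma_n}(\gn{\neg\varphi}, y) \prec \exists y\, \Prf_T^{\Pi_n}(\gn\varphi, y)\bigr),
\]
which is indeed $\Sigma_n$ because both sides of $\prec$ have $\Sigma_n$ matrices (by Proposition \ref{small_rfn}, the first is $\Sigma_n$; the second is $\Pi_n$, and the witness-comparison converts this to a $\Sigma_n$ whole). To verify $\Pi_n$-conservativity of $\varphi$, suppose $T + \varphi \vdash \pi$ for some $\pi \in \Pi_n$. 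Then $T + \neg\pi \vdash \neg\varphi$, and Proposition \ref{small_rfn}(2) gives $\PA + \neg\pi \vdash \Prf_T^{\Sigma_n}(\gn{\neg\varphi}, \num{p})$ for some $p \in \omega$. If $T \nvdash \pi$, I reason in a model of $T + \neg\pi$: the left-hand $\prec$-witness is bounded by $\num{p}$, and any right-hand witness strictly below it forces $\varphi$ by reflection (Proposition \ref{small_rfn}(1)), while if no such smaller right-hand witness exists the fixed point again forces $\varphi$; in either case $\pi$ follows, contradicting $\neg\pi$. Hence $T \vdash \pi$. The $\Sigma_n$-conservativity of $\neg\varphi$ is symmetric. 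For (2), I run the same template with a $\mathcal{B}(\Sigma_n)$-provability predicate (assembled from $\Prf_T^{\Sigma_n \wedge \Pi_n}$ and the $(\Delta_n,\cdot)$-predicates of the preliminaries) on the right of $\prec$, yielding a $\Delta_{n+1}(\PA)$ fixed point.

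For (3), I argue semantically. Suppose $\varphi \in \Delta_{n+1}(\PA) \cap \DCons(\Sigma_n, \Sigma_n; T)$. Since $\Delta_{n+1}(\PA)$ is closed under negation, fix $\tau, \rho \in \Sigma_n$ with $\PA \vdash \varphi \leftrightarrow \forall x\, \tau(x)$ and $\PA \vdash \neg\varphi \leftrightarrow \forall x\, \rho(x)$. For each $k \in \omega$, $T + \varphi \vdash \tau(\num{k}) \in \Sigma_n$, so $\Sigma_n$-conservativity of $\varphi$ yields $T \vdash \tau(\num{k})$, and $\Sigma_n$-soundness of $T$ gives $\mathbb{N} \models \tau(\num{k})$; since $k$ was arbitrary, $\mathbb{N} \models \varphi$. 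The symmetric computation applied to $\neg\varphi$ yields $\mathbb{N} \models \neg\varphi$, a contradiction.

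For (4), I would exploit the Boolean structure of $\mathcal{B}(\Sigma_n)$. Write $\varphi \leftrightarrow \bigvee_{i=1}^m(\sigma_i \wedge \pi_i)$ with $\sigma_i \in \Sigma_n$, $\pi_i \in \Pi_n$, and suppose both $T + \varphi$ and $T + \neg\varphi$ are consistent and $\Sigma_n$-conservative over $T$. From $T + \varphi \vdash \bigvee_i \sigma_i \in \Sigma_n$, conservativity gives $T \vdash \bigvee_i \sigma_i$; combining this with $T + \neg\varphi \vdash \bigwedge_i(\neg\sigma_i \vee \neg\pi_i)$ yields $T + \neg\varphi \vdash \bigvee_i(\sigma_i \wedge \neg\pi_i) \in \Sigma_n$, hence $T \vdash \bigvee_i(\sigma_i \wedge \neg\pi_i)$. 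Iterating these extractions—e.g., when $m = 2$ one directly obtains $T \vdash \sigma_1 \wedge \sigma_2$, so $\varphi$ reduces modulo $T$ to the $\Pi_n$ sentence $\pi_1 \vee \pi_2$ whose negation $\neg\pi_1 \wedge \neg\pi_2 \in \Sigma_n$ is then forced by $\Sigma_n$-conservativity of $\neg\varphi$ to be $T$-provable, contradicting consistency of $T + \varphi$—I expect in general to force $T$ to decide $\varphi$, yielding the contradiction. The $(\Pi_n, \Pi_n)$-case follows by applying the same argument to $\neg\varphi$.

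The main obstacle is (4) in general $m$: the low-$m$ cases go through cleanly, but generalizing requires careful bookkeeping of which $\sigma_i$'s and $\pi_i$'s have been pinned down by $T$ at each stage, and likely benefits from alternating between the DNF $\bigvee_i(\sigma_i \wedge \pi_i)$ and the CNF $\bigwedge_j(\sigma'_j \to \sigma''_j)$ normal forms of $\mathcal{B}(\Sigma_n)$ when extracting $\Sigma_n$-consequences from $T + \varphi$ and $T + \neg\varphi$ respectively.
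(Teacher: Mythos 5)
The paper itself gives no proof of this Fact---it is stated as a collection of known results with citations to Solovay, H\'ajek, and Lindstr\"om---so there is nothing in the text to compare against line by line. Your constructions for (1) and (2) are the standard ones and match the fixed-point templates the paper itself uses elsewhere (Theorem \ref{Doubly_B_B}, the proof of Theorem \ref{Pi}), and your semantic argument for (3) is correct and complete. Two small repairs on the positive parts: in (1), with $\prec$ as defined the case split inside $T+\neg\pi$ is on whether some $z\le\num{p}$ (not $z<\num{p}$) witnesses $\Prf_T^{\Pi_n}(\gn{\varphi},z)$, though both branches still yield $\varphi$; in (2) the right-hand predicate should simply be $\Prf_T^{\Sigma_n\wedge\Pi_n}$ (the $(\Delta_n,\cdot)$-predicates play no role here), $\mathcal{B}(\Sigma_n)$-conservativity of $\neg\varphi$ reduces to $\Sigma_n\vee\Pi_n$-conservativity since every $\mathcal{B}(\Sigma_n)$ sentence is a conjunction of such disjunctions, and you still owe the check that the fixed point is $\Delta_{n+1}(\PA)$, which comes from rewriting it as $\PR_T^{\Sigma_n}(\gn{\neg\varphi})\wedge\neg\bigl(\PR_T^{\Sigma_n\wedge\Pi_n}(\gn{\varphi})\prec\PR_T^{\Sigma_n}(\gn{\neg\varphi})\bigr)$, a $\Pi_{n+1}$ form.

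The genuine gap is item (4) for general $m$, which you acknowledge; as written, the plan stalls exactly where the work is. The missing idea is to abandon the raw DNF in favour of the difference-hierarchy normal form: every $\mathcal{B}(\Sigma_n)$ sentence is logically equivalent to $\bigvee_{i=1}^{k}(\sigma_{2i-1}\wedge\neg\sigma_{2i})$ with $\sigma_1,\dots,\sigma_{2k}\in\Sigma_n$ and $\vdash\sigma_{j+1}\to\sigma_j$ for every $j$ (the chain elements live in the lattice generated by the given $\Sigma_n$ sentences, which is again $\Sigma_n$). Then $\varphi$ asserts that the largest $j$ with $\sigma_j$ true is odd, and the bookkeeping becomes a clean induction: $T+\varphi\vdash\sigma_1$ gives $T\vdash\sigma_1$ by $\Sigma_n$-conservativity of $\varphi$; and if $T\vdash\sigma_1,\dots,\sigma_j$, then whichever of $\varphi,\neg\varphi$ asserts that the largest true index differs in parity from $j$ proves $\sigma_{j+1}$ over $T$, so the corresponding conservativity assumption yields $T\vdash\sigma_{j+1}$. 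After $2k$ steps $T\vdash\sigma_{2k}$, hence $T\vdash\neg\varphi$, contradicting the consistency of $T+\varphi$. Finally, your reduction of the $(\Pi_n,\Pi_n)$ case to ``the same argument applied to $\neg\varphi$'' does not work: replacing $\varphi$ by $\neg\varphi$ only swaps $\Gamma$ and $\Lambda$, which are already equal, and does not convert $\Pi_n$-conservativity into $\Sigma_n$-conservativity. You need the dual argument, using the decreasing-chain form $\bigvee_i(\pi_{2i-1}\wedge\neg\pi_{2i})$ with $\pi_j\in\Pi_n$ and extracting $\Pi_n$ consequences at each stage.
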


Notice that $\varphi \in \DCons(\Gamma, \Lambda; T)$ is equivalent to $\neg \varphi \in \DCons(\Lambda, \Gamma; T)$. 
So, $\Pi_n \cap \DCons(\Sigma_n, \Pi_n; T) \neq \emptyset$ directly follows from Fact \ref{Fact}.1.  
Then, from Fact \ref{Fact_DA}, we obtain $\Sigma_n \cap \DCons(\SP, \Sigma_n; T) \neq \emptyset$, $\Pi_n \cap \DCons(\SP, \Pi_n; T) \neq \emptyset$, and so on. 
Fact \ref{Fact}.1 turns out to be the best result for the case $\Theta \in \{\Sigma_n, \Pi_n\}$.
Also, $(\Sigma_n \land \Pi_n) \cap \DCons(\Pi_n, \Sigma_n; T) \neq \emptyset$ and $(\Sigma_n \land \Pi_n) \cap \DCons(\Sigma_n, \Pi_n; T) \neq \emptyset$ follow from Fact \ref{Fact}.1. 
In the following, we will not mention such obvious consequences of the facts and theorems.
We prove the following theorem regarding a missing case.

\begin{thm}\label{Doubly_and_SP}
$(\Sigma_n \land \Pi_n) \cap \DCons(\SP, \Sigma_n \land \Pi_n; T) \neq \emptyset$. 
\end{thm}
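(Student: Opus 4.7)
The plan is to construct $\varphi$ as a conjunction $\sigma_0 \wedge \pi_0$ of a $\Sigma_n$ sentence $\sigma_0$ and a $\Pi_n$ sentence $\pi_0$, produced by a simultaneous Gödel fixed point. This syntactic shape is motivated by a clean reduction of the $\neg\varphi$ side: since a $\Sigma_n \wedge \Pi_n$ consequence of $\neg\varphi$ decomposes into its $\Sigma_n$ and $\Pi_n$ conjuncts, $\neg\varphi \in \Cons(\Sigma_n \wedge \Pi_n, T)$ is equivalent to $\neg\varphi$ being both $\Sigma_n$- and $\Pi_n$-conservative over $T$. Since $\neg\varphi = \neg\sigma_0 \vee \neg\pi_0$, a $T$-derivation of a $\Sigma_n$ sentence $\sigma$ from $\neg\varphi$ specializes to one from $\neg\sigma_0$, so it is enough to arrange $\neg\sigma_0 \in \Cons(\Sigma_n, T)$ and, symmetrically, $\neg\pi_0 \in \Cons(\Pi_n, T)$. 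Both are the one-sided Solovay-type conditions of Fact \ref{Fact}.1 and its dual.

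The main obstacle is the $\SP$-conservativity of $\varphi$, i.e.\ the implication $T + \sigma_0 + \pi_0 \vdash \sigma \wedge \pi \Rightarrow T \vdash \sigma \vee \pi$. Two independent Solovay sentences do not suffice: without coordination $T + \sigma_0 + \pi_0$ may be inconsistent, and even in the consistent case the standard Solovay reasoning yields only separated conclusions such as $T + \pi_0 \vdash \pi$ or $T + \sigma_0 \vdash \sigma$, neither of which implies $T \vdash \sigma \vee \pi$. I would therefore couple $\sigma_0$ and $\pi_0$ by a simultaneous fixed point whose witness comparisons reference the joint formulas $\neg\sigma_0 \vee \neg\pi_0$ and $\sigma_0 \wedge \pi_0$, using the relativized proof predicates $\PR_T^{\Sigma_n}$, $\PR_T^{\Pi_n}$, and the $(\Delta_n,\cdot)$-refinements supplied by Proposition \ref{small_rfn}. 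A natural candidate is
\[
\sigma_0 \leftrightarrow \PR_T^{\Sigma_n}(\gn{\neg\sigma_0 \vee \neg\pi_0}) \preccurlyeq \PR_T^{(\Delta_n, \Pi_n)}(\gn{\sigma_0 \wedge \pi_0}),
\]
with a dual $\Pi_n$-shaped equation for $\pi_0$ using $\PR_T^{(\Delta_n, \Sigma_n)}$; the complexities of $\Prf_T^{\Sigma_n}$, $\Prf_T^{(\Delta_n,\Sigma_n)}$ and $\Prf_T^{(\Delta_n,\Pi_n)}$ together with the witness-comparison calculus keep $\sigma_0$ in $\Sigma_n$ and $\pi_0$ in $\Pi_n$.

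To verify $\SP$-conservativity I would argue by contradiction. Assume $T + \sigma_0 + \pi_0 \vdash \sigma \wedge \pi$ and $T \nvdash \sigma \vee \pi$, so that $T + \neg\sigma + \neg\pi$ is consistent. From $T + (\sigma_0 \wedge \pi_0) \vdash \sigma$ one gets $T + \neg\sigma \vdash \neg\sigma_0 \vee \neg\pi_0$, which under the $\Pi_n$-truth of $\neg\sigma$ gives (via Proposition \ref{small_rfn}.2 with $\Gamma = \Pi_n$) a $\Prf_T^{\Pi_n}$-witness for $\neg\sigma_0 \vee \neg\pi_0$; symmetrically, $\neg\pi \in \Sigma_n$ yields a $\Prf_T^{\Sigma_n}$-witness. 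On the other hand, working inside $T + \sigma_0 + \pi_0$ where $\sigma_0 \wedge \pi_0$ is $\Delta_n$, Proposition \ref{small_rfn}.3 produces numerical witnesses for $\Prf_T^{(\Delta_n, \Sigma_n)}$ and $\Prf_T^{(\Delta_n, \Pi_n)}$ of $\sigma_0 \wedge \pi_0$. Comparing these witnesses in a model of $T + \neg\sigma + \neg\pi$, using the fixed-point equations and Proposition \ref{wc}, forces a contradiction, from which $T \vdash \sigma \vee \pi$ follows. The delicate point is to design the coupling so that the witness coming from $\neg\sigma$ engages the $\pi_0$-equation and the one from $\neg\pi$ engages the $\sigma_0$-equation; this cross-interaction is exactly what yields the disjunctive conclusion instead of the separated Solovay conclusions.
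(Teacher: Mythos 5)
Your reduction of the $\neg\varphi$ side is sound: for $\varphi = \sigma_0 \wedge \pi_0$, the $\Sigma_n\wedge\Pi_n$-conservativity of $\neg\varphi$ does follow from $\neg\sigma_0 \in \Cons(\Sigma_n,T)$ and $\neg\pi_0 \in \Cons(\Pi_n,T)$, and you correctly identify that the whole difficulty lies in the $\SP$-conservativity of the conjunction. But that is precisely where your argument stops being a proof. The candidate simultaneous fixed point is never verified to have any of the three required properties, and your closing sentence explicitly defers ``the delicate point'' of designing the coupling so that the cross-interaction works. There are also concrete technical obstacles in the sketch as written: your appeal to Proposition \ref{small_rfn}.3 requires a sentence that is $\Delta_n(U)$ for a \emph{subtheory $U$ of $T$}, whereas $\sigma_0 \wedge \pi_0$ is only trivially $\Delta_n$ of $T+\sigma_0+\pi_0$, which is not a subtheory of $T$; and the model-theoretic comparison of witnesses in a model of $T+\neg\sigma+\neg\pi$ is not set up in enough detail to see that a contradiction actually falls out. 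So the central step is a plan, not an argument.

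For comparison, the paper obtains the coordination you are after without any new fixed point: it takes $\varphi \in \Sigma_n \cap \DCons(\Pi_n,\Sigma_n;T)$ by Solovay's theorem and then $\psi \in \Pi_n \cap \DCons(\Sigma_n,\Pi_n;T+\varphi)$, i.e.\ the second Solovay sentence is chosen generic over the \emph{extended} theory $T+\varphi$. The $\SP$-conservativity of $\varphi\wedge\psi$ is then a short witness-comparison argument applied to the \emph{consequence} $\sigma\wedge\pi$ itself: from $T+(\varphi\wedge\psi)\vdash\sigma\wedge\pi$ one passes to the $\Sigma_n$ sentence $\sigma\prec\neg\pi$, strips off $\psi$ using $\Sigma_n$-conservativity over $T+\varphi$, passes to the $\Pi_n$ sentence $\neg(\neg\pi\preccurlyeq\sigma)$, strips off $\varphi$ using $\Pi_n$-conservativity over $T$, and concludes $T\vdash\sigma\lor\pi$. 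If you want to salvage your approach, the cleanest fix is to adopt this iterated use of Fact \ref{Fact}.1 (note that the paper also needs $\neg\psi\in\Cons(\Pi_n,T+\varphi)$ rather than your $\neg\pi_0\in\Cons(\Pi_n,T)$ for the second half, but either suffices once the sentences are chosen appropriately); attempting to realize the same effect by a bespoke simultaneous fixed point would require substantially more work than your sketch provides.
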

\begin{proof}
By Fact \ref{Fact}.1, we obtain $\varphi \in \Sigma_n \cap \DCons(\Pi_n, \Sigma_n;T)$ and $\psi \in \Pi_n \cap \DCons(\Sigma_n, \Pi_n;T +\varphi)$. 
We prove $\varphi \wedge \psi \in \DCons(\SP, \Sigma_n \land \Pi_n; T)$.

First, we show $\varphi \wedge \psi \in \Cons(\SP, T)$. 
Suppose $T + (\varphi \land \psi) \vdash \sigma \wedge \pi$ where $\sigma \in \Sigma_n$ and $\pi \in \Pi_n$. 
It follows that $T+ (\varphi \land \psi) \vdash \sigma \prec \neg \pi$.
Since $\psi \in \Cons(\Sigma_n, T+\varphi)$, we obtain $T+ \varphi \vdash \sigma \prec \neg \pi$, which implies $T+ \varphi \vdash \neg(\neg \pi \preccurlyeq \sigma)$. 
Since $\varphi \in \Cons(\Pi_n, T)$, we obtain $T \vdash \neg (\neg \pi \preccurlyeq \sigma)$, and $T + \neg \pi \vdash \sigma \prec \neg \pi$. 
It follows $T + \neg \pi \vdash \sigma$, equivalently, $T \vdash \sigma \lor \pi$. 

Second, we prove $\neg (\varphi \land \psi) \in \Cons(\Sigma_n \wedge \Pi_n;T)$. 
Let $\sigma \in \Sigma_n$ and $\pi \in \Pi_n$ be such that $T + \neg (\varphi \land \psi) \vdash \sigma \wedge \pi$. 
Then, we obtain $T + \neg \varphi \vdash \sigma$ and $T+ \neg \psi \vdash \pi$. Since $\neg \varphi \in \Cons(\Sigma_n,T)$, it follows that $T \vdash \sigma$.
Since $T + \neg \psi + \varphi \vdash \pi$, $\varphi \in \Cons(\Pi_n,T)$, and $\neg \psi \in \Cons(\Pi_n, T+ \varphi)$, we obtain $T \vdash \pi$. 
We conclude $T \vdash \sigma \wedge \pi$.
\end{proof}

Table \ref{table0} summarizes the $\Theta = \Sigma_n \land \Pi_n$ case. 

\begin{table}[ht]
\centering
\scriptsize{
\begin{tabular}{|c||c|c|c|c|}
\hline
\diagbox{$\Gamma$}{$\Lambda$} & $\SP$ & $\Pi_n$ & $\Sigma_n$ & $\Sigma_n \land \Pi_n$ \\
\hline

\hline 
$\Sigma_n$ &
\multicolumn{1}{>{\columncolor[gray]{1}}c|}{$\checkmark$}
& \multicolumn{1}{>{\columncolor[gray]{1}}c|}{\begin{tabular}{c} $\checkmark$ \\ Fact \ref{Fact}.1 \\ Solovay \end{tabular}}
& \multicolumn{1}{>{\columncolor[gray]{0.5}}c|}{\begin{tabular}{c} $\times$ \\ Fact \ref{Fact}.4 \\ Lindstr\"om \end{tabular}}
& \multicolumn{1}{>{\columncolor[gray]{0.5}}c|}{$\times$} \\

\hline 
$\Pi_n$ & 
\multicolumn{1}{>{\columncolor[gray]{1}}c|}{$\checkmark$}
& \multicolumn{1}{>{\columncolor[gray]{0.5}}c|}{\begin{tabular}{c} $\times$ \\ Fact \ref{Fact}.4 \\ Lindstr\"om \end{tabular}}
& \multicolumn{1}{>{\columncolor[gray]{1}}c|}{\begin{tabular}{c} $\checkmark$ \\ Fact \ref{Fact}.1 \\ Solovay \end{tabular}}
& \multicolumn{1}{>{\columncolor[gray]{0.5}}c|}{$\times$} \\

\hline
$\SP$ & 
\multicolumn{1}{>{\columncolor[gray]{1}}c|}{$\checkmark$}
& \multicolumn{1}{>{\columncolor[gray]{1}}c|}{$\checkmark$}
& \multicolumn{1}{>{\columncolor[gray]{1}}c|}{$\checkmark$}
& \multicolumn{1}{>{\columncolor[gray]{1}}c|}{\begin{tabular}{c} $\checkmark$ \\ Thm.~\ref{Doubly_and_SP} \end{tabular}}\\

\hline
\end{tabular}
}
\caption{The existence of a $\Sigma_n \land \Pi_n$ sentence thah is doubly $(\Gamma, \Lambda)$-conservative over $T$}\label{table0}
\end{table}

It immediately follows from Theorem \ref{Doubly_and_SP} that there exists a $\Sigma_n \lor \Pi_n$ sentence $\varphi$ such that $\varphi \in \DCons(\Sigma_n \land \Pi_n, \SP; T)$. 
Table \ref{table1} then summarizes the $\Theta = \mathcal{B}(\Sigma_n)$ case.

\begin{table}[ht]
\centering
\scriptsize{
\begin{tabular}{|c||c|c|c|c|}
\hline
\diagbox{$\Gamma$}{$\Lambda$} & $\SP$ & $\Pi_n$ & $\Sigma_n$ & $\Sigma_n \land \Pi_n$ \\
\hline

\hline 
$\Sigma_n \land \Pi_n$ & 
\multicolumn{1}{>{\columncolor[gray]{1}}c|}{\begin{tabular}{c} $\checkmark$ \\ Thm.~\ref{Doubly_and_SP} \end{tabular}}
& \multicolumn{1}{>{\columncolor[gray]{0.5}}c|}{$\times$}
& \multicolumn{1}{>{\columncolor[gray]{0.5}}c|}{$\times$}
& \multicolumn{1}{>{\columncolor[gray]{0.5}}c|}{$\times$} \\

\hline 
$\Sigma_n$ &
\multicolumn{1}{>{\columncolor[gray]{1}}c|}{$\checkmark$}
& \multicolumn{1}{>{\columncolor[gray]{1}}c|}{\begin{tabular}{c} $\checkmark$ \\ Fact \ref{Fact}.1 \\ Solovay \end{tabular}}
& \multicolumn{1}{>{\columncolor[gray]{0.5}}c|}{\begin{tabular}{c} $\times$ \\ Fact \ref{Fact}.4 \\ Lindstr\"om \end{tabular}}
& \multicolumn{1}{>{\columncolor[gray]{0.5}}c|}{$\times$} \\

\hline 
$\Pi_n$ & 
\multicolumn{1}{>{\columncolor[gray]{1}}c|}{$\checkmark$}
& \multicolumn{1}{>{\columncolor[gray]{0.5}}c|}{\begin{tabular}{c} $\times$ \\ Fact \ref{Fact}.4 \\ Lindstr\"om \end{tabular}}
& \multicolumn{1}{>{\columncolor[gray]{1}}c|}{\begin{tabular}{c} $\checkmark$ \\ Fact \ref{Fact}.1 \\ Solovay \end{tabular}}
& \multicolumn{1}{>{\columncolor[gray]{0.5}}c|}{$\times$} \\

\hline
$\SP$ & 
\multicolumn{1}{>{\columncolor[gray]{1}}c|}{$\checkmark$}
& \multicolumn{1}{>{\columncolor[gray]{1}}c|}{$\checkmark$}
& \multicolumn{1}{>{\columncolor[gray]{1}}c|}{$\checkmark$}
& \multicolumn{1}{>{\columncolor[gray]{1}}c|}{\begin{tabular}{c} $\checkmark$ \\ Thm.~\ref{Doubly_and_SP} \end{tabular}}\\

\hline
\end{tabular}
}
\caption{The existence of a $\mathcal{B}(\Sigma_n)$ sentence that is doubly $(\Gamma, \Lambda)$-conservative over $T$}\label{table1}
\end{table}

We prove there always exists a $\Sigma_{n+1}$ sentence with the strongest double conservation property for the $n$-th level of arithmetical hierarchy.

\begin{thm}\label{Doubly_B_B}
$\Sigma_{n+1} \cap \DCons(\mathcal{B}(\Sigma_n), \mathcal{B}(\Sigma_n); T) \neq \emptyset$. 
\end{thm}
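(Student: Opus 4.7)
The plan is to imitate Solovay's original fixed-point construction, but performing the witness comparison on a $\mathcal{B}(\Sigma_n)$-relativized proof predicate rather than on $\Sigma_n$- or $\Pi_n$-relativized ones. First I would define, analogously to the predicates introduced in Section~\ref{Sec:Pre},
\[
\Prf_T^{\mathcal{B}(\Sigma_n)}(x, y) :\equiv \exists u \leq y \bigl(\mathcal{B}(\Sigma_n)(u) \wedge \True_{\mathcal{B}(\Sigma_n)}(u) \wedge \Prf_T(u \dot{\to} x, y)\bigr),
\]
where $\True_{\mathcal{B}(\Sigma_n)}$ is the canonical truth predicate for $\mathcal{B}(\Sigma_n)$ sentences. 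Using $\Sigma_n$-collection, this formula is $\PA$-provably equivalent to a $\mathcal{B}(\Sigma_n)$ formula, so $\PR_T^{\mathcal{B}(\Sigma_n)}(x) :\equiv \exists y \, \Prf_T^{\mathcal{B}(\Sigma_n)}(x, y)$ is $\Sigma_{n+1}$. The obvious analogues of Proposition~\ref{small_rfn}.(1) and (2) for this predicate are routine.

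Next I would invoke the diagonal lemma to fix a sentence $\varphi$ satisfying
\[
\PA \vdash \varphi \leftrightarrow \bigl(\PR_T^{\mathcal{B}(\Sigma_n)}(\gn{\neg\varphi}) \prec \PR_T^{\mathcal{B}(\Sigma_n)}(\gn{\varphi})\bigr).
\]
Since the right-hand side is a single unbounded existential of a $\mathcal{B}(\Sigma_n)$ matrix, $\varphi$ can be taken to be $\Sigma_{n+1}$. To show $\varphi \in \Cons(\mathcal{B}(\Sigma_n), T)$, I would argue by contradiction: assuming $T + \varphi \vdash \beta$ and $T \nvdash \beta$ for some $\beta \in \mathcal{B}(\Sigma_n)$, we get that $T + \neg \beta$ is consistent and $T + \neg \beta \vdash \neg \varphi$, and by the extended Proposition~\ref{small_rfn}.(2) there is some $p \in \omega$ with $\PA + \neg\beta \vdash \Prf_T^{\mathcal{B}(\Sigma_n)}(\gn{\neg\varphi}, \num{p})$. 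Reasoning in $T + \neg\beta$, from $\neg\varphi$ and the fixed-point equivalence the witness $\num{p}$ to $\Prf_T^{\mathcal{B}(\Sigma_n)}(\gn{\neg\varphi}, \cdot)$ forces $\exists z \leq \num{p} \, \Prf_T^{\mathcal{B}(\Sigma_n)}(\gn{\varphi}, z)$, whence uniform small reflection gives $\varphi$, a contradiction. The argument for $\neg \varphi \in \Cons(\mathcal{B}(\Sigma_n), T)$ is symmetric: from $T + \neg \beta \vdash \varphi$ and a witness $\num{q}$ to $\Prf_T^{\mathcal{B}(\Sigma_n)}(\gn{\varphi}, \cdot)$, the fixed-point equivalence applied to $\varphi$ supplies some $y < \num{q}$ with $\Prf_T^{\mathcal{B}(\Sigma_n)}(\gn{\neg\varphi}, y)$, and small reflection then yields $\neg \varphi$.

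The main obstacle will be the complexity bookkeeping, in particular confirming that the formula $\PR_T^{\mathcal{B}(\Sigma_n)}(\gn{\neg\varphi}) \prec \PR_T^{\mathcal{B}(\Sigma_n)}(\gn{\varphi})$ is genuinely $\Sigma_{n+1}$ rather than escaping up to $\Sigma_{n+2}$; this rests on the standard fact that $\mathcal{B}(\Sigma_n)$ is closed under bounded quantifiers over $\PA$, so that negating a $\mathcal{B}(\Sigma_n)$ matrix and then applying $\forall z \leq y$ keeps one inside $\mathcal{B}(\Sigma_n)$. Once this closure and the $\mathcal{B}(\Sigma_n)$-version of Proposition~\ref{small_rfn} are verified, the rest of the proof is a direct symmetric replay of Solovay's construction.
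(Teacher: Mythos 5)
There is a genuine gap, and it sits exactly where you flag ``the main obstacle'': your complexity bookkeeping rests on two claims that are both false. First, there is no $\mathcal{B}(\Sigma_n)$ formula $\True_{\mathcal{B}(\Sigma_n)}(x)$ defining truth for $\mathcal{B}(\Sigma_n)$ sentences: since $\mathcal{B}(\Sigma_n)$ is closed under negation, such a predicate would diagonalize to a liar sentence inside $\mathcal{B}(\Sigma_n)$, contradicting Tarski's undefinability theorem. The canonical truth predicate for $\mathcal{B}(\Sigma_n)$ sentences is only $\Delta_{n+1}(\PA)$. Second, $\mathcal{B}(\Sigma_n)$ is \emph{not} closed under bounded quantification over $\PA$: a formula $\forall z \leq y\, \beta(z)$ with $\beta \in \mathcal{B}(\Sigma_n)$ climbs through the finite levels of the difference hierarchy over $\Sigma_n$ as $y$ grows and is in general only $\Delta_{n+1}(\PA)$. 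So your $\Prf_T^{\mathcal{B}(\Sigma_n)}(x,y)$ is not $\mathcal{B}(\Sigma_n)$, and the justification you give for the fixed point being $\Sigma_{n+1}$ does not stand as written. (The conclusion itself is salvageable: with the $\Delta_{n+1}(\PA)$ truth predicate the matrix of the witness comparison is $\Delta_{n+1}(\PA)$, and $\Sigma_{n+1}$-collection then pushes the bounded quantifiers inside a $\Sigma_{n+1}$ prefix; but that is a different argument from the one you propose, and it is the whole content of the step you single out as the crux.)

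The paper sidesteps this entirely, and the contrast is instructive. It uses the already-available predicate $\Prf_T^{\Sigma_n \wedge \Pi_n}$, which is honestly $\PA$-provably $\Sigma_{n+1}$, and diagonalizes with
\[
\PA \vdash \varphi \leftrightarrow \PR_T^{\Sigma_n \land \Pi_n}(\gn{\neg \varphi}) \preccurlyeq \PR_T^{\Sigma_n \land \Pi_n}(\gn{\varphi}).
\]
The point you are missing is the reduction of $\mathcal{B}(\Sigma_n)$-conservativity to $\Sigma_n \lor \Pi_n$-conservativity: every $\mathcal{B}(\Sigma_n)$ sentence is a conjunction of $\Sigma_n \lor \Pi_n$ sentences, so it suffices to treat a consequence $\sigma \lor \pi$ with $\sigma \in \Sigma_n$, $\pi \in \Pi_n$; and then the hypothesis that gets internalized is $T + \neg\sigma + \neg\pi \vdash \neg\varphi$, where $\neg\sigma \land \neg\pi$ \emph{is} (equivalent to) a $\Sigma_n \land \Pi_n$ sentence, so Proposition~\ref{small_rfn}.2 applies directly with $\Gamma = \Sigma_n \land \Pi_n$ and no new truth predicate is needed. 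Apart from this, your two symmetric conservativity arguments follow the same Solovay-style shape as the paper's, and your use of $\prec$ rather than $\preccurlyeq$ in the fixed point is harmless. To repair your write-up, either adopt the paper's reduction, or redo the complexity analysis honestly at the $\Delta_{n+1}(\PA)$ level and verify the $\mathcal{B}(\Sigma_n)$ analogues of Proposition~\ref{small_rfn} for that predicate.
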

\begin{proof}
By the Fixed Point Theorem (cf.~Lindstr\"om \cite{Lin}), we can effectively find a $\Sigma_{n+1}$ sentence $\varphi$ satisfying the following $\PA$-provable equivalence: 
\[
\PA \vdash \varphi \leftrightarrow \PR_T^{\Sigma_n \land \Pi_n}(\gn{\neg \varphi}) \preccurlyeq \PR_T^{\Sigma_n \land \Pi_n}(\gn{\varphi}).
\]

We prove $\varphi \in \Cons(\mathcal{B}(\Sigma_n),T)$. 
It suffices to prove that $\varphi \in \Cons(\Sigma_n \lor \Pi_n, T)$. 
Let $\sigma \in \Sigma_n$ and $\pi \in \Pi_n$ be such that $T+ \varphi \vdash \sigma \vee \pi$. 
Then, $T+ \neg \sigma + \neg \pi \vdash \neg \varphi$ and there exists a natural number $p \in \omega$ such that $\PA + \neg \sigma + \neg \pi \vdash \Prf_T^{\Sigma_n \wedge \Pi_n}(\gn{\neg \varphi}, \num{p})$. 
Since $T+ \neg \varphi \vdash \forall z < \num{p} \, \neg \Prf_T^{\Sigma_n \wedge \Pi_n}(\gn{\varphi},z)$, it follows that $T+ \neg \varphi + \neg \sigma + \neg \pi \vdash \varphi$, and thus $T+ \neg \sigma + \neg \pi \vdash \varphi$. 
By combining this with $T+ \varphi \vdash \sigma \vee \pi$, we conclude $T \vdash \sigma \vee \pi$.

$\neg \varphi \in \Cons(\mathcal{B}(\Sigma_n),T)$ is proved similarly.
\end{proof}

\subsection{$\Delta_{n+1}(U)$ sentences}

In this subsection, we consider the $\Theta = \Delta_{n+1}(U)$ case for subtheories $U$ of $T$. 
Fact \ref{Fact}.2 states that there always exists a $\Delta_{n+1}(\PA)$ sentence $\varphi$ such that $\varphi \in \DCons(\Pi_n, \mathcal{B}(\Sigma_n); T)$, and equivalently, $\neg \varphi \in \DCons(\mathcal{B}(\Sigma_n), \Pi_n; T)$.
On the other hand, it follows from Fact \ref{Fact}.3 that there exists not always a $\Delta_{n+1}(\PA)$ sentence $\varphi \in \DCons(\Sigma_n, \Sigma_n; T)$. 
Here, we refine Fact \ref{Fact}.3 by specifying the condition for which $\Delta_{n+1}(U) \cap \DCons(\Sigma_n, \Sigma_n; T) = \emptyset$ holds. 
For this purpose, we introduce the following definition. 

\begin{defn}\label{Consis}
Let $U$ be any theory. 
\begin{itemize}
    \item $T$ is said to be \textit{$\Gamma$-consistent over $U$} iff there is no $\Gamma$ formula $\varphi(x)$ such that $U \vdash \exists x \, \varphi(x)$ and $T \vdash \neg \varphi(\num{k})$ for all $k \in \omega$. 

    \item $T$ is said to be \textit{$\Gamma$-consistent} iff $T$ is $\Gamma$-consistent over $T$. 
\end{itemize}
\end{defn}

$\Sigma_n$-consistency is exactly Kreisel's $n$-consistency \cite{Kre57} that is a stratification of G\"odel's $\omega$-consistency. 
Our Definition \ref{Consis} provides a relativized version of Kreisel's $n$-consistency. 
We then prove the following refinement of Fact \ref{Fact}.3. 

\begin{thm}\label{Consis_Cons}
Let $U$ be any subtheory of $T$. 
The following are equivalent:
\begin{enumerate}
    \item $T$ is $\Sigma_{n+1}$-consistent over $U$.

    \item $T$ is $\Pi_n$-consistent over $U$.

    \item $\Delta_{n+1}(U) \cap \DCons(\Sigma_n, \Sigma_n; T) = \emptyset$. 

    \item $\Delta_{n+1}(U) \cap \DCons(\mathcal{B}(\Sigma_n), \mathcal{B}(\Sigma_n); T) = \emptyset$. 

\end{enumerate}
\end{thm}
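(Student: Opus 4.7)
The plan is to prove the cycle $(1) \Leftrightarrow (2) \Rightarrow (3) \Rightarrow (4) \Rightarrow (2)$, with the closing direction carrying essentially all of the arithmetical content.

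The equivalence $(1) \Leftrightarrow (2)$ is immediate: every $\Pi_n$ formula is a fortiori $\Sigma_{n+1}$, giving $(1) \Rightarrow (2)$; a $\Sigma_{n+1}$ witness $\exists y\, \pi(x,y)$ (with $\pi \in \Pi_n$) to the failure of $\Sigma_{n+1}$-consistency of $T$ over $U$ converts via pairing into the $\Pi_n$ witness $\pi'(z) := \pi((z)_0, (z)_1)$, giving $(2) \Rightarrow (1)$. Next, $(3) \Rightarrow (4)$ is trivial from $\DCons(\mathcal{B}(\Sigma_n), \mathcal{B}(\Sigma_n); T) \subseteq \DCons(\Sigma_n, \Sigma_n; T)$. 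For $(2) \Rightarrow (3)$, I argue contrapositively: given $\varphi \in \Delta_{n+1}(U) \cap \DCons(\Sigma_n, \Sigma_n; T)$, fix $U$-provable equivalences $\varphi \leftrightarrow \exists x \pi(x)$ and $\varphi \leftrightarrow \forall y \sigma(y)$ with $\pi \in \Pi_n$ and $\sigma \in \Sigma_n$. Since $U \subseteq T$, $\Sigma_n$-conservativity of $\varphi$ yields $T \vdash \sigma(\num{k})$ for every $k$, and dually $\Sigma_n$-conservativity of $\neg \varphi$ yields $T \vdash \neg \pi(\num{k})$ for every $k$. The tautology $U \vdash \varphi \lor \neg \varphi$ then gives $U \vdash \exists z\, \theta(z)$ for the $\Pi_n$ formula $\theta(z) := \pi((z)_0) \lor \neg \sigma((z)_1)$, which is refuted by $T$ at every numeral, contradicting $(2)$.

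The substantive implication is $(4) \Rightarrow (2)$. Contrapositively, assume $\pi \in \Pi_n$ with $U \vdash \exists x \pi(x)$ and $T \vdash \neg \pi(\num{k})$ for every $k$, and let $\chi(x) := \pi(x) \land \forall y < x \neg \pi(y)$, so $U \vdash \exists !x\, \chi(x)$ and every $\chi$-witness in any model of $T$ is non-standard. I would construct $\varphi$ by a Theorem~\ref{Doubly_B_B}-style fixed point with the witness comparison bounded by a $\chi$-witness; schematically
\[
\varphi \leftrightarrow \exists x \Bigl[ \chi(x) \land \exists y \leq x \bigl( \Prf_T^{\Sigma_n \land \Pi_n}(\gn{\neg \varphi}, y) \land \forall z < y\, \neg \Prf_T^{\Sigma_n \land \Pi_n}(\gn{\varphi}, z) \bigr) \Bigr].
\]
Using $U \vdash \exists ! x\, \chi(x)$, $\varphi$ is $U$-equivalent to its $\forall x (\chi(x) \to \cdots)$ dual, which (subject to the complexity caveat below) places it in $\Delta_{n+1}(U)$. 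For $\varphi \in \Cons(\mathcal{B}(\Sigma_n), T)$: if $T + \varphi \vdash \beta$ while $T \nvdash \beta$ for some $\beta \in \mathcal{B}(\Sigma_n)$, select a $\Sigma_n \land \Pi_n$-disjunct $\gamma$ of $\neg \beta$ with $T + \gamma$ consistent; if moreover $T + \gamma \vdash \neg \varphi$, Proposition~\ref{small_rfn}.2 supplies a standard $p$ with $\PA + \gamma \vdash \Prf_T^{\Sigma_n \land \Pi_n}(\gn{\neg \varphi}, \num{p})$. Unfolding $\neg \varphi$ at a non-standard $\chi$-witness $c$ and $y = p \leq c$ forces $T + \gamma \vdash \bigvee_{z < p} \Prf_T^{\Sigma_n \land \Pi_n}(\gn{\varphi}, \num{z})$; Proposition~\ref{small_rfn}.1 applied to each standard disjunct then gives $T + \gamma \vdash \varphi$, contradicting the consistency of $T + \gamma$. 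The argument for $\neg \varphi$ is symmetric.

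The main obstacle is the complexity bookkeeping for the $\Delta_{n+1}(U)$ claim. Since $\Prf_T^{\Sigma_n \land \Pi_n}$ is only $\Sigma_{n+1}$ rather than $\Sigma_n$, the naive reading of the body sits in $\Sigma_{n+1} \land \Pi_{n+1}$ and the bounded $\exists y \leq x$ on top threatens to push $\varphi$ above $\Sigma_{n+1}$ syntactically. The natural remedy is to replace the single $\Prf_T^{\Sigma_n \land \Pi_n}$ witness comparison by two independent comparisons using $\Prf_T^{\Sigma_n} \in \Sigma_n$ and $\Prf_T^{\Pi_n} \in \Pi_n$, and to exploit the elementary fact that $\Sigma_n$- and $\Pi_n$-conservativity of a single sentence together imply its $\Sigma_n \land \Pi_n$-conservativity, hence (by DNF of the negated Boolean combination and consistency) its $\mathcal{B}(\Sigma_n)$-conservativity. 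The decisive feature---that every $\chi$-witness in $T$ exceeds every standard numeral and so accommodates the standard $p$ extracted from Proposition~\ref{small_rfn}.2---survives the refactoring unchanged.
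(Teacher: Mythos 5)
Your treatment of $(1 \Leftrightarrow 2)$, $(2 \Rightarrow 3)$, and $(3 \Rightarrow 4)$ matches the paper's proof essentially verbatim (the paper even avoids your explicit pairing in $(2\Rightarrow 3)$ by observing that $\exists x\,\pi(x) \lor \exists x\,\neg\sigma(x)$ already yields $\exists x\,(\pi(x)\lor\neg\sigma(x))$). The problem is the closing implication $(4 \Rightarrow 2)$. You correctly diagnose that your fixed point, with the comparison run over the proof-code argument of the $\Sigma_{n+1}$ predicate $\Prf_T^{\Sigma_n\land\Pi_n}$ and bounded by a $\chi$-witness, lands outside $\Sigma_{n+1}$, but your proposed remedy does not work: it rests on the claim that $\Sigma_n$- and $\Pi_n$-conservativity (hence $\Sigma_n\land\Pi_n$-conservativity) of a sentence yield its $\mathcal{B}(\Sigma_n)$-conservativity ``by DNF of the negated Boolean combination.'' This implication is false, and the paper itself records the counterexample: Figure \ref{Fig0} places $\mathcal{B}(\Sigma_n)$-conservativity strictly above $\Sigma_n\land\Pi_n$-conservativity, and Proposition \ref{Prop_easy}.4 (citing \cite[Corollary 9.3]{KK}) exhibits a sentence in $\Cons(\Sigma_n\land\Pi_n,\PA)\setminus\Cons(\mathcal{B}(\Sigma_n),\PA)$. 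Concretely, $\mathcal{B}(\Sigma_n)$-conservativity reduces via CNF to $\Sigma_n\lor\Pi_n$-conservativity, whose verification requires handling the hypothesis $\neg\sigma\land\neg\pi$ --- a $\Sigma_n\land\Pi_n$ sentence --- and that is exactly what the $\Sigma_n\land\Pi_n$-relativized proof predicate is for; splitting into $\Prf_T^{\Sigma_n}$ and $\Prf_T^{\Pi_n}$ only buys $\Pi_n$- and $\Sigma_n$-conservativity separately. So after your refactoring you obtain at best a $\Delta_{n+1}(U)$ sentence witnessing $\neg(3)$, which closes the cycle through $(3)$ but leaves $(4)$ dangling: $(4)\Rightarrow(3)$ is not the trivial direction, and without a doubly $\mathcal{B}(\Sigma_n)$-conservative witness the equivalence with item $(4)$ is not established.

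The paper resolves the tension you identified with a different device: it keeps the single predicate $\PR_T^{\Sigma_n\land\Pi_n}$ on both sides of a standard witness comparison of $\Sigma_{n+1}$ sentences, and instead absorbs the hypothesis $\exists x\,\pi(x)$ as a disjunct into the second argument,
\[
\PA \vdash \varphi \leftrightarrow \PR_{T}^{\Sigma_n \wedge \Pi_n}(\gn{\neg \varphi}) \preccurlyeq \bigl(\exists x \, \pi(x) \vee \PR_{T}^{\Sigma_n \wedge \Pi_n}(\gn{\varphi})\bigr).
\]
This keeps $\varphi$ honestly $\Sigma_{n+1}$; since $U \vdash \exists x\,\pi(x)$, Proposition \ref{wc} makes $\varphi$ $U$-equivalent to the $\Pi_{n+1}$ sentence $\neg\bigl((\exists x\,\pi(x)\vee\PR_T^{\Sigma_n\land\Pi_n}(\gn{\varphi}))\prec\PR_T^{\Sigma_n\land\Pi_n}(\gn{\neg\varphi})\bigr)$, giving $\varphi\in\Delta_{n+1}(U)$, while the hypothesis $T\vdash\neg\pi(\num{k})$ for all $k$ ensures the extra disjunct contributes no standard witnesses, so the double $\mathcal{B}(\Sigma_n)$-conservativity goes through exactly as in Theorem \ref{Doubly_B_B}. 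You would need to adopt some such restructuring; as written, your proof of $(4\Rightarrow 2)$ has a genuine gap.
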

\begin{proof}
$(1 \Leftrightarrow 2)$: This equivalence is easily proved by using a pairing function. 

$(2 \Rightarrow 3)$: 
We prove the contrapositive. 
Suppose that there exists a $\Delta_{n+1}(U)$ sentence $\varphi$ such that $\varphi \in \DCons(\Sigma_n, \Sigma_n;T)$. 
Since $\varphi \in \Delta_{n+1}(U)$, there exist a $\Sigma_n$ formula $\sigma(x)$ and a $\Pi_n$ formula $\pi(x)$ such that 
\[
U \vdash \varphi \leftrightarrow \forall x \, \sigma(x) \quad \text{and} \quad  U \vdash \varphi \leftrightarrow \exists x \, \pi(x).
\]
Since $U+ \varphi \vdash \exists x \left(\pi(x) \vee \neg \sigma(x) \right)$ and $U+ \neg \varphi \vdash \exists x \left(\pi(x) \vee \neg \sigma(x) \right)$, 
we obtain $U \vdash \exists x \left(\pi(x) \vee \neg \sigma(x) \right)$ by the law of excluded middle.
On the other hand, since $U + \varphi \vdash \forall x \, \sigma(x)$ and $U + \neg \varphi \vdash \forall x\, \neg \pi(x)$, we obtain $T + \varphi \vdash \sigma(\num{k})$ and $T+ \neg \varphi \vdash \neg \pi(\num{k})$ for all $k \in \omega$. 
Then, it follows from $\varphi \in \DCons(\Sigma_n, \Sigma_n;T)$ that $T \vdash \sigma(\num{k}) \wedge \neg \pi(\num{k})$ for all $k \in \omega$.
Since $\pi(x) \lor \neg \sigma(x)$ is a $\Pi_n$ formula, we conclude that $T$ is not $\Pi_n$-consistent over $U$. 

\medskip

$(3 \Rightarrow 4)$: Obvious.

\medskip

$(4 \Rightarrow 2)$: We prove the contrapositive.
Suppose that $T$ is not $\Pi_{n}$-consistent over $U$. 
Then, there exists a $\Pi_{n}$ formula $\pi(x)$ such that 
\[
U \vdash \exists x \, \pi(x) \quad \text{and} \quad T \vdash \neg \pi(\num{k}) \ \text{for all} \ k \in \omega.
\]

Let $\varphi$ be a $\Sigma_{n+1}$ sentence satisfying
\[
\PA \vdash \varphi \leftrightarrow \PR_{T}^{\Sigma_n \wedge \Pi_n}(\gn{\neg \varphi}) \preccurlyeq \bigl(\exists x \, \pi(x) \vee \PR_{T}^{\Sigma_n \wedge \Pi_n}(\gn{\varphi})\bigr).
\]
Since $U \vdash \exists x \, \pi(x)$, we obtain
\[
U \vdash \varphi \leftrightarrow \neg \Bigl( \bigl(\exists x \, \pi(x) \vee \PR_{T}^{\Sigma_n \wedge \Pi_n}(\gn{\varphi}) \bigr) \prec \PR_{T}^{\Sigma_n \wedge \Pi_n}(\gn{\neg \varphi}) \Bigr).
\]
Thus, $\varphi$ is a $\Delta_{n+1}(U)$ sentence.
Then, $\varphi \in \DCons(\mathcal{B}(\Sigma_n), \mathcal{B}(\Sigma_n); T)$ is proved in the same way as in the proof of Theorem \ref{Doubly_B_B}, so we omit its proof.
\end{proof}

Table \ref{table2} summarizes the case $\Theta = \Delta_{n+1}(\PA)$. 

\begin{table}[ht]
\centering
\scriptsize{
\begin{tabular}{|c||c|c|c|c|}
\hline
\diagbox{$\Gamma$}{$\Lambda$} & $\Pi_n$ & $\Sigma_n$ & $\Sigma_n \land \Pi_n$ & $\mathcal{B}(\Sigma_n)$ \\
\hline

\hline 
$\mathcal{B}(\Sigma_n)$ & 
\multicolumn{1}{>{\columncolor[gray]{1}}c|}{\begin{tabular}{c} $\checkmark$ \\ Fact \ref{Fact}.2 \\ H\'ajek \end{tabular}}
& \multicolumn{1}{>{\columncolor[gray]{0.8}}c|}{\begin{tabular}{c} $\Sigma_{n+1}$-incon. \\ over $\PA$ \end{tabular}}
& \multicolumn{1}{>{\columncolor[gray]{0.8}}c|}{\begin{tabular}{c} $\Sigma_{n+1}$-incon. \\ over $\PA$ \end{tabular}}
& \multicolumn{1}{>{\columncolor[gray]{0.8}}c|}{\begin{tabular}{c} $\Sigma_{n+1}$-incon. \\ over $\PA$ \\ Thm.~\ref{Consis_Cons} \end{tabular}} \\

\hline 
$\Sigma_n \land \Pi_n$ & \multicolumn{1}{>{\columncolor[gray]{1}}c|}{$\checkmark$}
& \multicolumn{1}{>{\columncolor[gray]{0.8}}c|}{\begin{tabular}{c} $\Sigma_{n+1}$-incon. \\ over $\PA$ \end{tabular}}
& \multicolumn{1}{>{\columncolor[gray]{0.8}}c|}{\begin{tabular}{c} $\Sigma_{n+1}$-incon. \\ over $\PA$ \end{tabular}}
& \multicolumn{1}{>{\columncolor[gray]{0.8}}c|}{\begin{tabular}{c} $\Sigma_{n+1}$-incon. \\ over $\PA$ \end{tabular}} \\

\hline 
$\Sigma_n$ & 
\multicolumn{1}{>{\columncolor[gray]{1}}c|}{$\checkmark$}
& \multicolumn{1}{>{\columncolor[gray]{0.8}}c|}{\begin{tabular}{c} $\Sigma_{n+1}$-incon. \\ over $\PA$ \\ Thm.~\ref{Consis_Cons} \end{tabular}}
& \multicolumn{1}{>{\columncolor[gray]{0.8}}c|}{\begin{tabular}{c} $\Sigma_{n+1}$-incon. \\ over $\PA$ \end{tabular}}
& \multicolumn{1}{>{\columncolor[gray]{0.8}}c|}{\begin{tabular}{c} $\Sigma_{n+1}$-incon. \\ over $\PA$ \end{tabular}} \\

\hline 
$\Pi_n$ & 
\multicolumn{1}{>{\columncolor[gray]{1}}c|}{$\checkmark$}
& \multicolumn{1}{>{\columncolor[gray]{1}}c|}{$\checkmark$}
& \multicolumn{1}{>{\columncolor[gray]{1}}c|}{$\checkmark$}
& \multicolumn{1}{>{\columncolor[gray]{1}}c|}{\begin{tabular}{c} $\checkmark$ \\ Fact \ref{Fact}.2 \\ H\'ajek \end{tabular}} \\

\hline
\end{tabular}
}
\caption{The existence of a $\Delta_{n+1}(\PA)$ sentence that is doubly $(\Gamma, \Lambda)$-conservative over $T$}\label{table2}
\end{table}

The following kinds of questions naturally arise here:  
\begin{itemize}
    \item Is there a theory $T$ such that $\Delta_{n+1}(\PA) \cap \DCons(\Sigma_n, \Sigma_n; T) =  \emptyset$ but $\Delta_{n+1}(T) \cap \DCons(\Sigma_n, \Sigma_n; T) \neq \emptyset$?

    \item Is there a theory $T$ such that $\Delta_{n+1}(T) \cap \DCons(\Sigma_n, \Sigma_n; T) = \emptyset$ but $\Delta_{n+2}(\PA) \cap \DCons(\Sigma_{n+1}, \Sigma_{n+1}; T) \neq \emptyset$?
\end{itemize}
By Theorem \ref{Consis_Cons}, these questions are transformed into the following questions respectively: 
\begin{itemize}
    \item Is there a theory $T$ such that $T$ is $\Sigma_{n+1}$-consistent over $\PA$ but is not $\Sigma_{n+1}$-consistent?

    \item Is there a theory $T$ such that $T$ is $\Sigma_{n+1}$-consistent but is not $\Sigma_{n+2}$-consistent over $\PA$?
\end{itemize}
To answer these questions, we pay attention to the following Smory\'nski's characterization of $n$-consistency. 
Let $\RFN_{\Pi_n}(T)$ be the $\Pi_n$ sentence
\[
    \forall x \bigl(\PR_T(\gn{\True_{\Pi_n}(\dot{x})}) \to \True_{\Pi_n}(x) \bigr), 
\]
which is called the uniform $\Pi_n$ reflection principle for $T$. 

\begin{fact}[Smory\'nski {\cite[Theorem 1.1]{Smo77_a}}]\label{Smo_RFN}
For $n \geq 2$, the following are equivalent: 
\begin{enumerate}
    \item $T$ is $\Sigma_n$-consistent. 
    \item $T + \RFN_{\Pi_n}(T)$ is $\Sigma_2$-sound. 
\end{enumerate}
\end{fact}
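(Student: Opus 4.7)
Both directions are proved by contraposition, converting between $\Sigma_n$-inconsistency of $T$ and $\Sigma_2$-unsoundness of $T + \RFN_{\Pi_n}(T)$ by constructing appropriate witnessing formulas.

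For $(2) \Rightarrow (1)$, I would suppose $T$ is not $\Sigma_n$-consistent and fix a $\Sigma_n$ formula $\varphi(x)$ with $T \vdash \exists x\, \varphi(x)$ and $T \vdash \neg \varphi(\num{k})$ for every $k \in \omega$. Each $\neg \varphi(\num{k})$ is a $\Pi_n$ sentence, so within $T + \RFN_{\Pi_n}(T)$ the $\Pi_2$ hypothesis $\forall k\, \PR_T(\gn{\neg \varphi(\dot{k})})$ combined with $\RFN_{\Pi_n}(T)$ yields $\forall k\, \neg \varphi(k)$, contradicting $\exists x\, \varphi(x)$. Hence
\[
T + \RFN_{\Pi_n}(T) \vdash \exists k\, \neg \PR_T(\gn{\neg \varphi(\dot{k})}),
\]
a $\Sigma_2$ sentence that is false in $\mathbb{N}$ by the choice of $\varphi$, witnessing the failure of $\Sigma_2$-soundness.

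For $(1) \Rightarrow (2)$, I would suppose $T + \RFN_{\Pi_n}(T) \vdash \sigma$ for some false $\Sigma_2$ sentence $\sigma = \exists x\, \pi(x)$ with $\pi \in \Pi_1$. Since $\sigma$ is false, $\Sigma_1$-completeness yields $T \vdash \neg \pi(\num{k})$ for every $k$. The idea is to absorb $\neg \RFN_{\Pi_n}(T)$ into an existential by defining the $\Sigma_n$ formula
\[
\psi(x) \; := \; \pi(x) \vee \exists y \leq x \bigl(\Pi_n(y) \wedge \Prf_T(\gn{\True_{\Pi_n}(\dot{y})}, x) \wedge \neg \True_{\Pi_n}(y)\bigr).
\]
Proof-padding shows $\exists x\, \psi(x)$ is $\PA$-equivalent to $\sigma \vee \neg \RFN_{\Pi_n}(T)$, so $T \vdash \exists x\, \psi(x)$ follows from $T \vdash \RFN_{\Pi_n}(T) \to \sigma$. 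For each $k \in \omega$, the bounded second disjunct of $\neg \psi(\num{k})$ reduces to a $T$-decidable finite case-analysis over $y \leq k$ on whether $\num{k}$ codes a $T$-proof of $\True_{\Pi_n}(\num{y})$; together with $T \vdash \neg \pi(\num{k})$ this gives $T \vdash \neg \psi(\num{k})$ for every $k$. So $\psi$ witnesses $\Sigma_n$-inconsistency of $T$, completing the contraposition.

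The main obstacle is engineering the right $\psi$ in the $(1) \Rightarrow (2)$ direction: the bounded existential $\exists y \leq x$ is essential because it simultaneously lets the outer existential absorb any witness for $\neg \RFN_{\Pi_n}(T)$ by proof-padding while enabling $T$ to refute each instance $\psi(\num{k})$ by inspecting only finitely many $\Delta_0$-decidable cases. Without this bound, the $\Sigma_1$-in-$x$ structure of the inner proof predicate would be lost, and the finite-conjunction argument yielding $T \vdash \neg \psi(\num{k})$ would break down.
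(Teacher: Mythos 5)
Your proof is correct and follows essentially the same route as the paper's own argument (the proof of Theorem~\ref{RFN}, which specializes to this Fact when $U = T$): one direction rests on $\PA + \RFN_{\Pi_n}(T) \vdash \PR_T(\gn{\neg\varphi(\dot{x})}) \to \neg\varphi(x)$ producing the $\Sigma_2$ sentence $\exists x\, \neg\PR_T(\gn{\neg\varphi(\dot{x})})$, and the other absorbs a counterexample to $\RFN_{\Pi_n}(T)$ into a single $\Sigma_n$ existential whose numerical instances $T$ refutes via small reflection. The only differences are cosmetic: you argue both directions by contraposition, and in the harder direction you bound the inner quantifier by the proof variable and invoke padding where the paper instead uses a pairing function and routes through the intermediate condition ``$T$ is $\Sigma_n$-consistent over $T + \RFN_{\Pi_n}(T)$''.
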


We prove the following refinement of Smory\'nski's theorem. 
Our proof of Theorem \ref{RFN} merely traces Smory\'nski's proof of Theorem \ref{Smo_RFN}. 

\begin{thm}\label{RFN}
For $n \geq 2$, the following are equivalent: 
\begin{enumerate}
    \item $T$ is $\Sigma_n$-consistent over $U$. 
    \item $T$ is $\Sigma_n$-consistent over $U + \RFN_{\Pi_n}(T)$. 
    \item $U + \RFN_{\Pi_n}(T)$ is $\Sigma_2$-sound. 
\end{enumerate}
\end{thm}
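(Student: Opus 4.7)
The plan is to prove $(1) \Leftrightarrow (3)$ for an arbitrary theory in the role of $U$ and then derive $(2) \Leftrightarrow (3)$ by applying this equivalence with $U$ replaced by $U + \RFN_{\Pi_n}(T)$, using $(U + \RFN_{\Pi_n}(T)) + \RFN_{\Pi_n}(T) = U + \RFN_{\Pi_n}(T)$. This yields all three equivalences at once.

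For $(3) \Rightarrow (1)$, I argue contrapositively. Suppose some $\sigma(x) \in \Sigma_n$ witnesses the failure of $\Sigma_n$-consistency of $T$ over $U$: $U \vdash \exists x \sigma(x)$ and $T \vdash \neg \sigma(\num{k})$ for every $k \in \omega$. Since $\neg \sigma(x) \in \Pi_n$, the instance $\forall x \bigl(\PR_T(\gn{\neg \sigma(\dot{x})}) \to \neg \sigma(x)\bigr)$ of $\RFN_{\Pi_n}(T)$, combined with $\exists x \sigma(x)$, yields
\[
U + \RFN_{\Pi_n}(T) \vdash \exists x \, \neg \PR_T(\gn{\neg \sigma(\dot{x})}),
\]
which is a $\Sigma_2$ sentence. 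By formalized $\Sigma_1$-completeness, each $\PR_T(\gn{\neg \sigma(\num{k})})$ is true in $\mathbb{N}$, so this $\Sigma_2$ sentence is false, contradicting $(3)$.

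For $(1) \Rightarrow (3)$, the main direction, I trace Smory\'nski's original argument contrapositively. Suppose $U + \RFN_{\Pi_n}(T) \vdash \chi$ for some false $\Sigma_2$ sentence $\chi = \exists x \, \theta(x)$ with $\theta \in \Pi_1$; by $\Sigma_1$-completeness each $\neg \theta(\num{k})$ is $T$-provable. Rewriting $U \vdash \RFN_{\Pi_n}(T) \to \chi$ furnishes $U \vdash \exists z \, \sigma(z)$, where
\[
\sigma(z) := \theta(z) \,\vee\, \bigl(\PR_T(\gn{\True_{\Pi_n}(\dot{z})}) \wedge \neg \True_{\Pi_n}(z)\bigr),
\]
which is $\Sigma_n$ since the hypothesis $n \geq 2$ ensures $\Pi_1 \subseteq \Sigma_n$. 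To contradict $(1)$ I must verify $T \vdash \neg \sigma(\num{k})$ for every $k$. The conjunct $T \vdash \neg \theta(\num{k})$ is immediate from $\Sigma_1$-completeness; the remaining conjunct amounts to the local $\Pi_n$-reflection $T \vdash \PR_T(\gn{\True_{\Pi_n}(\num{k})}) \to \True_{\Pi_n}(\num{k})$ for each $k$, and this is the principal technical obstacle.

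Following Smory\'nski, this obstacle is overcome by a case analysis on the $k$ provided by $\Sigma_n$-consistency: the consistency of $T + \sigma(\num{k})$ splits into the case where $T + \theta(\num{k})$ is consistent (which, since $\neg \theta(\num{k})$ is $\Sigma_1$, forces $\theta(\num{k})$ to be true in $\mathbb{N}$ and hence contradicts the falsity of $\chi$) and the case where $T$ is consistent with $\PR_T(\gn{\True_{\Pi_n}(\num{k})}) \wedge \neg \True_{\Pi_n}(\num{k})$ (which, via L\"ob-style reasoning, forces $T \nvdash \True_{\Pi_n}(\num{k})$ and, unwinding further, conflicts with the $\Sigma_n$-consistency of $T$ over $U$). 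The passage from Smory\'nski's original setting $U = T$ to an arbitrary $U$ introduces no new subtleties, since the construction of $\sigma$ depends only on the derivation of $\chi$ and its instance-wise $T$-refutation involves only $T$ itself.
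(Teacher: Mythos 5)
Your overall architecture (prove $(1)\Leftrightarrow(3)$ for an arbitrary theory in place of $U$ and obtain $(2)\Leftrightarrow(3)$ by substituting $U + \RFN_{\Pi_n}(T)$) is sound, and your $(3)\Rightarrow(1)$ argument is essentially the paper's. The gap is in $(1)\Rightarrow(3)$, at exactly the point you flag as the principal technical obstacle: your witnessing formula
\[
\sigma(z) := \theta(z) \vee \bigl(\PR_T(\gn{\True_{\Pi_n}(\dot z)}) \wedge \neg\True_{\Pi_n}(z)\bigr)
\]
retains the unbounded provability predicate $\PR_T$, so refuting its instances in $T$ would require the local reflection $T \vdash \PR_T(\gn{\True_{\Pi_n}(\num k)}) \to \True_{\Pi_n}(\num k)$, which by L\"ob's theorem holds only when $T \vdash \True_{\Pi_n}(\num k)$. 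Your proposed repair --- use $\Sigma_n$-consistency to get a $k$ with $T + \sigma(\num k)$ consistent and split into cases --- does dispose of the case that $T + \theta(\num k)$ is consistent, but the other case, that $T + \PR_T(\gn{\True_{\Pi_n}(\num k)}) \wedge \neg\True_{\Pi_n}(\num k)$ is consistent, is not a contradiction: by L\"ob it occurs for every $k$ coding a $T$-unprovable $\Pi_n$ sentence, and it yields nothing beyond $T \nvdash \True_{\Pi_n}(\num k)$, which conflicts neither with the $\Sigma_n$-consistency of $T$ over $U$ nor with the falsity of $\chi$. The argument dead-ends there; equivalently, in the contrapositive framing, this $\sigma$ simply is not a witness to $\Sigma_n$-inconsistency, because most of its instances are not $T$-refutable.

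The actual resolution (the paper's, tracing Smory\'nski) is to keep the proof variable explicit rather than hidden inside $\PR_T$: from $U \vdash \RFN_{\Pi_n}(T) \to \chi$ one prenexes so that the proof variable $w$ of $\Prf_T(\gn{\True_{\Pi_n}(\dot z)}, w)$ is existentially quantified out front together with $z$ and $x$, and then contracts the three quantifiers into one via a pairing $v = \langle w, z, x\rangle$. The matrix is still $\Sigma_n$, and each numerical instance now requires only the \emph{small} reflection $T \vdash \Prf_T(\gn{\True_{\Pi_n}(\num j)}, \num i) \to \True_{\Pi_n}(\num j)$ for fixed numerals, which is $T$-provable (either $i$ really is such a proof, in which case $T \vdash \True_{\Pi_n}(\num j)$, or $T$ refutes the $\Delta_1$ antecedent). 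With that replacement no case analysis is needed and the direction goes through. (A cosmetic point on your $(3)\Rightarrow(1)$: the truth of $\PR_T(\gn{\neg\sigma(\num k)})$ follows simply from the fact that $T$ actually proves $\neg\sigma(\num k)$, not from formalized $\Sigma_1$-completeness.)
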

\begin{proof}
$(1 \Rightarrow 2)$: Suppose that $T$ is $\Sigma_n$-consistent over $U$. 
Let $\varphi(x)$ be any $\Sigma_n$ formula and suppose $U + \RFN_{\Pi_n}(T) \vdash \exists x \, \varphi(x)$.
Since $U \vdash \bigl( \exists w\, \Prf_T(\gn{\True_{\Pi_n}(\dot{z})}, w) \to \True_{\Pi_n}(z) \bigr) \to \exists x \, \varphi(x)$, we have
\[
   U \vdash \exists w, z, x \, \Bigl( \bigl(\Prf_T(\gn{\True_{\Pi_n}(\dot{z})},w) \to \True_{\Pi_n}(z)\bigr) \to \varphi(x) \Bigr).
\]
Then, by using a computable bijection $\langle \cdot, \cdot, \cdot \rangle : \mathbb{N}^3 \to \mathbb{N}$, we obtain
\[
    U \vdash \exists v \, \Bigl[\exists w, z, x \leq v \, \Bigl( v = \langle w, z, x \rangle \land \Bigl( \bigl(\Prf_T(\gn{\True_{\Pi_n}(\dot{z})},w) \to \True_{\Pi_n}(z)\bigr) \to \varphi(x) \Bigr) \Bigr]. 
\]
Since the formula in the square brackets $\left[ \cdots \right]$ is $\Sigma_n$ and $T$ is $\Sigma_n$-consistent over $U$, there exists a natural number $m$ such that
\[
T \nvdash \forall w, z, x \leq \num{m} \, \Bigl( \num{m} = \langle w, z, x \rangle \to \Bigl( \bigl(\Prf_T(\gn{\True_{\Pi_n}(\dot{z})},w) \to \True_{\Pi_n}(z)\bigr) \land \neg \varphi(x) \Bigr). 
\]
For $i, j, k \in \omega$ with $m = \langle i, j, k \rangle$, this is equivalent to 
\[
T \nvdash \bigl(\Prf_T(\gn{\True_{\Pi_n}(\num{j})}, \num{i}) \to \True_{\Pi_n}(\num{j})\bigr) \land \neg \varphi(\num{k}). 
\]
Since $T \vdash \Prf_T(\gn{\True_{\Pi_n}(\num{j})}, \num{i}) \to \True_{\Pi_n}(\num{j})$, we conclude $T \nvdash \neg \varphi(\num{k})$.

\medskip

$(2 \Rightarrow 3)$: 
Suppose that $T$ is $\Sigma_n$-consistent over $U + \RFN_{\Pi_n}(T)$. 
Let $\pi(x)$ be a $\Pi_1$ formula and suppose $U + \RFN_{\Pi_n}(T) \vdash \exists x \, \pi(x)$. 
Since $n \geq 2$, $T$ is $\Sigma_2$-consistent over $U+ \RFN_{\Pi_n}(T)$. 
Then, we find a natural number $m \in \omega$ such that $T \nvdash \neg \, \pi(\num{m})$. 
Since $\neg \pi(\num{m})$ is a $\Sigma_1$ sentence, by $\Sigma_1$-completeness, we obtain $\mathbb{N} \models \pi(\num{m})$. 
Thus, $\mathbb{N} \models \exists x \, \pi(x)$. 
We have proved that $U + \RFN_{\Pi_n}(T)$ is $\Sigma_2$-sound.

\medskip

$(3 \Rightarrow 1)$: 
Suppose that $U + \RFN_{\Pi_n}(T)$ is $\Sigma_2$-sound.
Let $\varphi(x)$ be any $\Sigma_n$ formula such that $U \vdash \exists x \, \varphi(x)$.
Since $\PA  + \RFN_{\Pi_n}(T) \vdash \PR_T(\gn{\neg \varphi(\dot{x})}) \to \neg \varphi(x)$, we have $\PA  + \RFN_{\Pi_n}(T) \vdash \exists x \, \varphi(x) \to \exists x\, \neg \PR_T(\gn{\neg \varphi(\dot{x})})$, and hence $U + \RFN_{\Pi_n}(T) \vdash \exists x\, \neg \PR_T(\gn{\neg \varphi(\dot{x})})$. 
By the supposition, we have $\mathbb{N} \models \exists x \, \neg \PR_T(\gn{\neg \varphi(\dot{x})})$, that is, there exists a natural number $m$ such that $T \nvdash \neg \varphi(\num{m})$.
Thus, $T$ is $\Sigma_n$-consistent over $U$.
\end{proof}

We then obtain the following equivalences and implications. 

\begin{prop}\label{Eq_Impl}
Let $U$ be any subtheory of $T$.  
\begin{enumerate}
    \item $T$ is $\Sigma_1$-consistent over $U$ if and only if $U$ is $\Sigma_1$-sound. 

    \item $T$ is $\Sigma_2$-consistent over $U$ if and only if $T$ is $\Sigma_1$-sound and $U$ is $\Sigma_2$-sound. 

    \item $T$ is $\Sigma_3$-consistent over $\PA$ if and only if $T$ is $\Sigma_2$-sound.

    \item If $T$ is $\Sigma_n$-consistent, then $T$ is $\Sigma_n$-consistent over $U$. 

    \item If $T$ is $\Sigma_n$-sound, then $T$ is $\Sigma_{n+1}$-consistent over $\PA$. 

    \item If $T$ is $\Sigma_{n+1}$-consistent over $\PA$, then $T$ is $\Sigma_n$-consistent. 

\end{enumerate}
\end{prop}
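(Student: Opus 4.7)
My plan is that all six items follow from elementary soundness arguments combined with a single \emph{law-of-excluded-middle (LEM) trick}: given a $\Gamma$ formula $\varphi(x)$ witnessing failure of some form of $\Gamma$-consistency, the formula $\psi(x) := \varphi(x) \lor \forall z\,\neg\varphi(z)$ lifts that witness one level up---indeed, $\exists x\,\psi(x) \leftrightarrow \top$ is a tautology already provable in $\PA$, while $T \vdash \neg \psi(\num{k})$ reduces to $T \vdash \neg \varphi(\num{k}) \land \exists z\,\varphi(z)$, both conjuncts being hypotheses.

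Items (4) and (5) are immediate. For (4), $U \subseteq T$ upgrades any $\Sigma_n$-witness to failure of $\Sigma_n$-consistency over $U$ to a witness over $T$. For (5), I argue contrapositively: a $\PA$-provable $\exists x\,\varphi(x)$ with $\varphi(x) = \exists y\,\pi(y,x)$, $\pi \in \Pi_n$, and $T \vdash \neg\varphi(\num{k})$ for every $k$ gives, by soundness of $\PA$, a standard $\num{k}$ and $\num{m}$ with $\mathbb{N} \models \pi(\num{m}, \num{k})$; then $T \vdash \neg\pi(\num{m}, \num{k})$ is a false $\Sigma_n$-theorem of $T$, contradicting $\Sigma_n$-soundness. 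For (1), the ``soundness $\Rightarrow$ consistency'' direction is the same style; the converse uses $\Sigma_1$-completeness, a false $U$-provable $\exists x\,\sigma_0(x)$ with $\sigma_0 \in \Delta_0$ making $\sigma_0(x)$ itself a $\Sigma_1$ witness since $\PA \vdash \neg \sigma_0(\num{k})$ for every $k$.

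For (2) and (3), the ``soundness $\Rightarrow$ consistency'' direction is again a soundness-plus-witness argument as in (5), and the ``$U$ is $\Sigma_2$-sound'' part of (2) follows by $\Sigma_1$-completeness as in (1). The remaining parts---``$T$ is $\Sigma_1$-sound'' in (2), ``$T$ is $\Sigma_2$-sound'' in (3), and all of (6)---all fall to the LEM trick. In (2), a false $\sigma = \exists y\,\sigma_0(y) \in \Sigma_1$ with $T \vdash \sigma$ and $T \vdash \neg \sigma_0(\num{k})$ yields $\psi(y) = \sigma_0(y) \lor \forall z\,\neg \sigma_0(z) \in \Sigma_2$ witnessing failure over $U$ (using $\PA \subseteq U$ for the $\exists y\,\psi(y)$ clause). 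In (3), a false $\sigma = \exists y\,\tau(y) \in \Sigma_2$ with $\tau \in \Pi_1$, $T \vdash \sigma$, and $T \vdash \neg \tau(\num{k})$ yields $\psi(y) = \tau(y) \lor \forall z\,\neg \tau(z) \in \Pi_2 \subseteq \Sigma_3$ witnessing failure over $\PA$. In (6), a $\Sigma_n$ witness $\varphi$ to failure of $\Sigma_n$-consistency of $T$ yields $\psi(x) = \varphi(x) \lor \forall z\,\neg \varphi(z) \in \Sigma_n \lor \Pi_n \subseteq \Sigma_{n+1}$ witnessing failure of $\Sigma_{n+1}$-consistency of $T$ over $\PA$.

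The main point to verify in each application of the LEM trick is the complexity tally---concretely, that $\Pi_n \subseteq \Sigma_{n+1}$ (via a dummy $\exists$) and that $\Sigma_{n+1}$ is closed under $\lor$, so $\psi$ remains in $\Sigma_{n+1}$---together with confirming that the disjunct $\forall z\,\neg\varphi(z)$ is $T$-refutable from the hypothesis $T \vdash \exists z\,\varphi(z)$. Because the trick already pushes each failure directly into $\Sigma_{n+1}$-consistency over $\PA$, Theorem \ref{RFN} is not needed inside Proposition \ref{Eq_Impl}; it serves as the background against which the statements of (2), (3), and (6) become natural.
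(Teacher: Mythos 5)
Your proposal is correct, but it takes a genuinely different route from the paper's. The paper derives items 2, 3, 5, and 6 from Theorem \ref{RFN}, i.e., from the relativized Smory\'nski characterization ``$T$ is $\Sigma_n$-consistent over $U$ iff $U + \RFN_{\Pi_n}(T)$ is $\Sigma_2$-sound''; in particular, item 6 is split into the case $n = 1$ (handled via items 1 and 2) and $n \geq 2$, the latter using the provable implication $\PA + \RFN_{\Pi_{n+1}}(T) \vdash \RFN_{\Pi_{n+1}}(T + \RFN_{\Pi_n}(T))$ together with a $\Pi_{n+1}$-conservativity argument. You bypass Theorem \ref{RFN} entirely: the ``soundness implies consistency'' directions are direct witness-extraction arguments (pull a true instance $\pi(\num{m}, \num{k})$ out of the provable existential by soundness, then contradict soundness or consistency of $T$ at one level down), and the converse directions all reduce to your excluded-middle witness $\psi(x) :\equiv \varphi(x) \lor \forall z\, \neg \varphi(z)$, which trades one level of complexity for $\PA$-provability of $\exists x\, \psi(x)$; I checked the complexity tallies ($\Delta_0 \lor \Pi_1 \subseteq \Sigma_2$ in item 2, $\Pi_1 \lor \Pi_2 \subseteq \Sigma_3$ in item 3, $\Sigma_n \lor \Pi_n \subseteq \Sigma_{n+1}$ in item 6) and the refutability of each $\psi(\num{k})$ in $T$, and they all go through (for item 3, note that $T \vdash \neg\tau(\num{k})$ is not part of the raw non-soundness data but follows from provable $\Sigma_1$-completeness, as you use elsewhere). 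What each approach buys: the paper's route keeps the proposition tied to the reflection-principle machinery it needs anyway for Proposition \ref{CE_0} and Figure \ref{Fig1}, whereas yours is self-contained, treats item 6 uniformly for all $n \geq 1$, and makes explicit that Proposition \ref{Eq_Impl} does not actually depend on Smory\'nski's theorem. The only hygiene point is the one you already flag: $\psi$ is $\Sigma_{n+1}$ only up to $\PA$-provable prenexing, and $\Gamma$-consistency is robust under such equivalences since both $U$ and $T$ extend $\PA$.
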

\begin{proof}
1. 
\begin{align*}
    U\ \text{is not}\ \Sigma_1\text{-sound} & \iff U \vdash \exists x \, \pi(x) \ \&\ \mathbb{N} \models \forall x \, \neg \pi(x)\ \text{for some}\ \Delta_0\ \text{formula}\ \pi(x), \\
    & \iff U \vdash \exists x \, \pi(x) \ \&\ \forall k \in \omega \, (T \vdash \neg \pi(\num{k}))\ \text{for some}\ \Delta_0\ \text{formula}\ \pi(x), \\
    & \iff T\ \text{is not}\ \Delta_0\text{-consistent over}\ U, \\
    & \iff T\ \text{is not}\ \Sigma_1\text{-consistent over}\ U.
\end{align*}

\medskip

2. By Theorem \ref{RFN}, it suffices to prove that $U + \RFN_{\Pi_2}(T)$ is $\Sigma_2$-sound if and only if $T$ is $\Sigma_1$-sound and $U$ is $\Sigma_2$-sound. 

$(\Rightarrow)$: Suppose $U + \RFN_{\Pi_2}(T)$ is $\Sigma_2$-sound, then it is also $\Pi_3$-sound. 
Then, the $\Pi_2$ sentence $\RFN_{\Pi_2}(T)$ is true, and hence $T$ is $\Sigma_1$-sound. 

$(\Leftarrow)$: Suppose that $T$ is $\Sigma_1$-sound and $U$ is $\Sigma_2$-sound. 
Let $\sigma$ be any $\Sigma_2$ sentence such that $U + \RFN_{\Pi_2}(T) \vdash \sigma$. 
By the $\Sigma_2$-soundness of $U$, we have $\mathbb{N} \models \RFN_{\Pi_2}(T) \to \sigma$. 
Since $\mathbb{N} \models \RFN_{\Pi_2}(T)$ by the $\Sigma_1$-soundness of $T$, we obtain $\mathbb{N} \models \sigma$. 

\medskip

3. This is proved in the similar manner as the second item. 

\medskip

4. Obvious.

\medskip

5. Suppose $T$ is $\Sigma_n$-sound. 
Then $\mathbb{N} \models \RFN_{\Pi_{n+1}}(T)$ holds, and hence $\PA + \RFN_{\Pi_{n+1}}(T)$ is in particular $\Sigma_2$-sound. 
By Theorem \ref{RFN}, $T$ is $\Sigma_{n+1}$-consistent over $\PA$. 

\medskip

6. The $n=1$ case follows from items 1 and 2. 
We may assume $n \geq 2$. 
Suppose that $T$ is $\Sigma_{n+1}$-consistent over $\PA$. 
By Theorem \ref{RFN}, $\PA + \RFN_{\Pi_n+1}(T)$ is $\Sigma_2$-sound. 
Since $\PA + \RFN_{\Pi_{n+1}}(T) \vdash \RFN_{\Pi_{n+1}}(T + \RFN_{\Pi_n}(T))$ (cf.~Smory\'nski \cite[Corollary 4.1.12]{Smo77}), we have that $\PA + \RFN_{\Pi_{n+1}}(T)$ is $\Pi_{n+1}$-conservative over $T + \RFN_{\Pi_n}(T)$. 
    Since $n \geq 2$, we see that $T + \RFN_{\Pi_n}(T)$ is $\Sigma_2$-sound. 
    By Theorem \ref{RFN} again, $T$ is $\Sigma_n$-consistent. 
\end{proof}

Figure \ref{Fig1} summarizes the equivalences and implications between the conditions we are dealing with. 
In the figure, we assume that $n \geq 3$ and $U$ is a subtheory of $T$. 

\begin{figure}[ht]
\centering
\begin{tikzpicture}

\node (n+1_T) at (-5.5,8) {$\Delta_{n+2}(T) \cap \DCons(\Sigma_{n+1}, \Sigma_{n+1}; T) = \emptyset$};
\node (n+1C_T) at (0,8) {$T$: $\Sigma_{n+1}$-con.};
\node (TT3S) at (5,8) {$T + \RFN_{\Pi_{n+1}}(T)$: $\Sigma_2$-sound};

\draw [<->, double] (n+1_T)--(n+1C_T);
\draw [<->, double] (n+1C_T)--(TT3S);

\node (n+1_U) at (-5.5,7) {$\Delta_{n+2}(U) \cap \DCons(\Sigma_{n+1}, \Sigma_{n+1}; T) = \emptyset$};
\node (n+1C_U) at (0,7) {$T$: $\Sigma_{n+1}$-con.~over $U$};
\node (UT2S) at (5,7) {$U + \RFN_{\Pi_{n+1}}(T)$: $\Sigma_2$-sound};

\draw [<->, double] (n+1_U)--(n+1C_U);
\draw [<->, double] (n+1C_U)--(UT2S);

\node (n_T) at (-5.5,6) {$\Delta_{n+1}(T) \cap \DCons(\Sigma_n, \Sigma_n; T) = \emptyset$};
\node (nC_T) at (0,6) {$T$: $\Sigma_n$-con.};
\node (TT2S) at (5,6) {$T + \RFN_{\Pi_n}(T)$: $\Sigma_2$-sound};

\draw [<->, double] (n_T)--(nC_T);
\draw [<->, double] (nC_T)--(TT2S);

\node (3_U) at (-5.5,5) {$\Delta_4(U) \cap \DCons(\Sigma_3, \Sigma_3; T) = \emptyset$};
\node (3C_U) at (0,5) {$T$: $\Sigma_3$-con.~over $U$};
\node (UT) at (5,5) {$U + \RFN_{\Pi_3}(T)$: $\Sigma_2$-sound};

\node (2_T) at (-5.5,4) {$\Delta_3(T) \cap \DCons(\Sigma_2, \Sigma_2; T) = \emptyset$};
\node (2C_T) at (0,4) {$T$: $\Sigma_2$-con.};
\node (2S) at (5,4) {$T$: $\Sigma_2$-sound};

\draw [<->, double] (3_U)--(3C_U);
\draw [<->, double] (3C_U)--(UT);
\draw [<->, double] (2_T)--(2C_T);
\draw [<->, double] (2C_T)--(2S);

\node (2_U) at (-5.5,3) {$\Delta_3(U) \cap \DCons(\Sigma_2, \Sigma_2; T) = \emptyset$};
\node (2C_U) at (0,3) {$T$: $\Sigma_2$-con.~over $U$};
\node (1S2S) at (5,3) {$T$: $\Sigma_1$-sound\ \&\ $U$: $\Sigma_2$-sound};

\draw [<->, double] (2_U)--(2C_U);
\draw [<->, double] (2C_U)--(1S2S);

\node (1_T) at (-5.5,2) {$\Delta_2(T) \cap \DCons(\Sigma_1, \Sigma_1; T) = \emptyset$};
\node (1C_T) at (0,2) {$T$: $\Sigma_1$-con.};
\node (1S) at (5,2) {$T$: $\Sigma_1$-sound};

\draw [<->, double] (1_T)--(1C_T);
\draw [<->, double] (1C_T)--(1S);

\node (1_U) at (-5.5,1) {$\Delta_2(U) \cap \DCons(\Sigma_1, \Sigma_1; T) = \emptyset$};
\node (1C_U) at (0,1) {$T$: $\Sigma_1$-con.~over $U$};
\node (0S1S) at (5,1) {$U$: $\Sigma_1$-sound};

\draw [<->, double] (1_U)--(1C_U);
\draw [<->, double] (1C_U)--(0S1S);

\draw [->, double] (5,8.7)--(TT3S);
\draw [->, double] (TT3S)--(UT2S);
\draw [->, double] (UT2S)--(TT2S);
\draw [->, double] (TT2S)--(UT);
\draw [->, double] (UT)--(2S);
\draw [->, double] (2S)--(1S2S);
\draw [->, double] (1S2S)--(1S);
\draw [->, double] (1S)--(0S1S);
\end{tikzpicture}
\caption{Equivalences and implications between the conditions}\label{Fig1}
\end{figure}
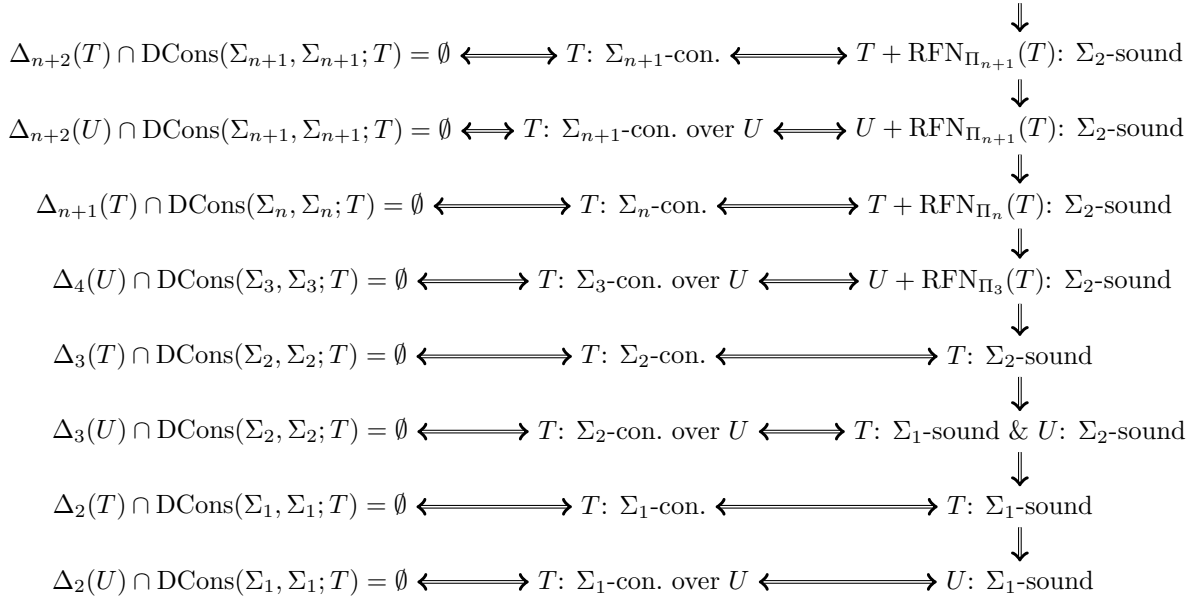

The following proposition gives an affirmative answer to our former questions.

\begin{prop}\label{CE_0}\leavevmode
\begin{enumerate}
    \item There exists a theory $T$ such that $T$ is $\Sigma_{n+1}$-consistent over $\PA$ but is not $\Sigma_{n+1}$-consistent. 

    \item For $n \geq 3$, there exists a theory $T$ such that $T$ is $\Sigma_n$-consistent but is not $\Sigma_{n+1}$-consistent over $\PA$. 

\end{enumerate}
\end{prop}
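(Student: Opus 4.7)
My plan is to apply Theorem~\ref{RFN} to both items, reducing each to a separation between the $\Sigma_2$-soundness of $\PA + \RFN_{\Pi_k}(T)$ and that of $T + \RFN_{\Pi_k}(T)$ for the appropriate~$k$. In both cases the candidate theory extends~$\PA$ with the denial of a reflection principle, exploiting the implication $\RFN_{\Pi_k}(T) \to \RFN_{\Pi_k}(\PA)$ that holds whenever $T \supseteq \PA$.

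For item~1, I take $T = \PA + \neg \RFN_{\Pi_{n+1}}(\PA)$, consistent by G\"odel's theorem applied to the reflection principle. Then $T + \RFN_{\Pi_{n+1}}(T)$ proves both $\RFN_{\Pi_{n+1}}(\PA)$ (via the above implication) and $\neg \RFN_{\Pi_{n+1}}(\PA)$ (from $T$'s axiom), hence is inconsistent and therefore not $\Sigma_2$-sound; by Theorem~\ref{RFN} this means $T$ is not $\Sigma_{n+1}$-consistent. For the positive direction, I aim to establish $\Sigma_n$-soundness of~$T$: by the argument of Proposition~\ref{Eq_Impl}.5 this yields $\mathbb{N} \models \RFN_{\Pi_{n+1}}(T)$, so $\PA + \RFN_{\Pi_{n+1}}(T)$ has $\mathbb{N}$ as a model and is therefore $\Sigma_2$-sound, giving $\Sigma_{n+1}$-consistency of~$T$ over~$\PA$ by Theorem~\ref{RFN}. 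The $\Sigma_n$-soundness reduces, via the deduction theorem, to the claim that no true $\Pi_n$ sentence $\PA$-implies $\RFN_{\Pi_{n+1}}(\PA)$, which follows from the strict hierarchy of iterated reflection principles over~$\PA$.

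For item~2 with $n \geq 3$, Proposition~\ref{Eq_Impl}.5 forces any witnessing~$T$ to lack $\Sigma_n$-soundness, while $\Sigma_n$-consistency combined with Fact~\ref{Smo_RFN} forces $\Sigma_2$-soundness of~$T$. I propose a self-referential construction using the fixed point theorem: obtain a $\Pi_{n+1}$ sentence~$\pi$ encoding enough of~$T$'s behavior so that $T = \PA + \pi$ satisfies $T \vdash \pi$ (a false $\Pi_{n+1}$ axiom) while each standard instance $\psi(\bar k)$ of some fixed $\PA$-provable $\Sigma_{n+1}$ existential $\exists x\, \psi(x)$ is refuted in~$T$. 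The verification requires that (i) $T + \RFN_{\Pi_n}(T)$ is $\Sigma_2$-sound, giving $\Sigma_n$-consistency, and (ii) $\PA + \RFN_{\Pi_{n+1}}(T)$ proves a false $\Sigma_2$ sentence derivable from the diagonal property of~$\pi$, giving failure of $\Sigma_{n+1}$-consistency over~$\PA$. The main obstacle is the delicate construction of~$\pi$: one must balance the $\Pi_{n+1}$-level lie against the required $\Pi_n$-level truth of~$T$, and simultaneously arrange that the falsity of $\RFN_{\Pi_{n+1}}(T)$ is $\Sigma_2$-visible from~$\PA$, without contaminating the lower-level soundness that underwrites $\Sigma_n$-consistency.
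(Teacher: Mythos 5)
Your treatment of item 1 is essentially sound and uses the same theory $T = \PA + \neg\RFN_{\Pi_{n+1}}(\PA)$ as the paper; the negative half is identical, and your positive half (establish $\Sigma_n$-soundness of $T$, hence $\mathbb{N}\models\RFN_{\Pi_{n+1}}(T)$, hence $\Sigma_2$-soundness of $\PA+\RFN_{\Pi_{n+1}}(T)$) is a valid alternative to the paper's route, which instead proves $\PA \vdash \RFN_{\Pi_{n+1}}(T) \leftrightarrow \RFN_{\Pi_{n+1}}(\PA)$ via the $\PA$-provable $\Pi_{n+1}$-conservativity of $T$ over $\PA$ and so never needs any soundness property of $T$ itself. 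The one sub-claim you leave vague --- that no true $\Pi_n$ sentence $\pi$ satisfies $\PA + \pi \vdash \RFN_{\Pi_{n+1}}(\PA)$ --- is true, but it does not really ``follow from the strict hierarchy of iterated reflection principles''; the clean argument is that $\PA + \pi + \RFN_{\Pi_{n+1}}(\PA) \vdash \Con(\PA+\pi)$ (apply reflection to the $\Pi_{n+1}$ sentence $\neg\pi$), so $\PA + \pi \vdash \RFN_{\Pi_{n+1}}(\PA)$ would contradict G\"odel's second incompleteness theorem for the consistent theory $\PA+\pi$. You should supply that argument rather than gesture at it.

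Item 2, however, is not a proof. You correctly isolate the constraints a witness must satisfy ($\Sigma_2$-sound but not $\Sigma_n$-sound, with the failure of $\RFN_{\Pi_{n+1}}(T)$ detectable over $\PA$) and you correctly guess that a diagonal construction is needed, but you never write down the fixed point, and you explicitly flag its construction as ``the main obstacle.'' That construction is the entire content of the paper's proof of item 2: one takes a $\Sigma_3$ sentence $\varphi$ with $\PA \vdash \varphi \leftrightarrow \neg\RFN_{\Pi_3}(\PA + \varphi + \RFN_{\Pi_n}(\PA+\varphi))$ and sets $T = \PA + \varphi$. The diagonalization automatically resolves the tension you describe: $\varphi$ must be false (otherwise $\mathbb{N}\models\RFN_{\Pi_n}(\PA+\varphi)$ and hence $\mathbb{N}\models\neg\varphi$), so $\mathbb{N}\models\RFN_{\Pi_3}(T+\RFN_{\Pi_n}(T))$, giving $\Sigma_2$-soundness of $T+\RFN_{\Pi_n}(T)$ and hence $\Sigma_n$-consistency by Theorem \ref{RFN}; on the other hand, since $\varphi\in\Sigma_3\subseteq\Pi_{n+1}$ for $n\geq 3$, the theory $\PA+\RFN_{\Pi_{n+1}}(T)$ proves $\varphi$, hence $\neg\RFN_{\Pi_3}(T+\RFN_{\Pi_n}(T))$, hence (by Smory\'nski's $\PA+\RFN_{\Pi_{n+1}}(T)\vdash\RFN_{\Pi_{n+1}}(T+\RFN_{\Pi_n}(T))$) its own inconsistency, so it is certainly not $\Sigma_2$-sound and Theorem \ref{RFN} applies again. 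Without this (or an equivalent) explicit construction, item 2 remains unproved.
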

\begin{proof}
1. Let $T : = \PA + \neg \RFN_{\Pi_{n+1}}(\PA)$. 
Since $\PA \vdash \forall x \, (\Pi_{n+1}(x) \land \PR_T(x) \to \PR_{\PA}(x))$ (cf.~\cite{KOSV}), it is proved $\PA \vdash \RFN_{\Pi_{n+1}}(T) \leftrightarrow \RFN_{\Pi_{n+1}}(\PA)$. 
Then, the theory $\PA + \RFN_{\Pi_{n+1}}(T)$ is sound because it is equivalent to $\PA + \RFN_{\Pi_{n+1}}(\PA)$. 
By Theorem \ref{RFN}, $T$ is $\Sigma_{n+1}$-consistent over $\PA$. 
On the other hand, the theory $T + \RFN_{\Pi_{n+1}}(T)$ is inconsistent because it proves both $\neg \RFN_{\Pi_{n+1}}(\PA)$ and $\RFN_{\Pi_{n+1}}(\PA)$. 
By Theorem \ref{RFN} again, $T$ is not $\Sigma_{n+1}$-consistent. 

\medskip

2. Suppose that $n \geq 3$. 
Let $\varphi$ be a $\Sigma_3$ sentence satisfying the following equivalence: 
\[
    \PA \vdash \varphi \leftrightarrow \neg \RFN_{\Pi_3}(\PA + \varphi + \RFN_{\Pi_n}(\PA + \varphi)).
\]
Let $T : = \PA + \varphi$.

Suppose, towards a contradiction, that $\mathbb{N} \models \varphi$. 
Then $\mathbb{N} \models \varphi \land \RFN_{\Pi_n}(\PA + \varphi)$, and hence $\mathbb{N} \models \RFN_{\Pi_3}(\PA + \varphi + \RFN_{\Pi_n}(\PA + \varphi))$. 
This means $\mathbb{N} \models \neg \varphi$, a contradiction. 

Therefore, we obtain $\mathbb{N} \not \models \varphi$, that is, $\mathbb{N} \models \RFN_{\Pi_3}(\PA + \varphi + \RFN_{\Pi_n}(\PA + \varphi))$. 
It follows that $T + \RFN_{\Pi_n}(T)$ is $\Sigma_2$-sound. 
By Theorem \ref{RFN}, we have that $T$ is $\Sigma_n$-consistent. 

Since $n \geq 3$, we have $\varphi \in \Pi_{n+1}$. 
It follows from $T \vdash \varphi$ that $\PA \vdash \PR_T(\gn{\varphi})$, and thus
\begin{align*}
    \PA + \RFN_{\Pi_{n+1}}(T) & \vdash \varphi,\\
    \PA + \RFN_{\Pi_{n+1}}(T) & \vdash \neg \RFN_{\Pi_3}(\PA + \varphi + \RFN_{\Pi_n}(\PA + \varphi)),\\
    \PA + \RFN_{\Pi_{n+1}}(T) & \vdash \neg \RFN_{\Pi_{n+1}}(T + \RFN_{\Pi_n}(T)),\\
    \PA + \RFN_{\Pi_{n+1}}(T) & \vdash \neg \RFN_{\Pi_{n+1}}(T). 
\end{align*}
We conclude that $\PA + \RFN_{\Pi_{n+1}}(T)$ is inconsistent. 
In particular, $\PA + \RFN_{\Pi_{n+1}}(T)$ is not $\Sigma_2$-sound. 
By Theorem \ref{RFN}, we see that $T$ is not $\Sigma_{n+1}$-consistent over $\PA$. 
\end{proof}

\section{Hereditarily doubly conservative sentences}\label{Sec:HDCS}

We start this section with the following definition. 

\begin{defn}[Hereditary double $(\Gamma, \Lambda)$-conservativity]\leavevmode
\begin{enumerate}
    \item We say that a sentence $\varphi$ is \textit{hereditarily doubly $(\Gamma, \Lambda)$-conservative} over $T$ iff $\varphi \in \bigcap \{\DCons(\Gamma, \Lambda; U) \mid T \vdash U \vdash \PA\}$. 
    
    \item Let $\HDCons(\Gamma, \Lambda; T)$ be the set of all sentences that are hereditarily doubly $(\Gamma, \Lambda)$-conservative over $T$. 
\end{enumerate}
\end{defn}

The authors have shown in a previous study that $\SP$-conservativity plays an important role in the analysis of hereditary double conservativity.

\begin{fact}[Kogure and Kurahashi {\cite[Theorem 5.7]{KK}}]\label{DA0}
The following are equivalent: 
\begin{enumerate}
    \item $T$ is $\SP$-conservative over $\PA$. 

    \item $\Sigma_n \cap \HDCons(\Pi_n, \Sigma_n; T) \neq \emptyset$. 

    \item $\Pi_n \cap \HDCons(\Sigma_n, \Pi_n; T) \neq \emptyset$. 

    \item $\Th_{\Sigma_n}(T) \subseteq \HCons(\Pi_n, T)$. 
        
    \item $\Th_{\Pi_n}(T) \subseteq \HCons(\Sigma_n, T)$. 

\end{enumerate}
\end{fact}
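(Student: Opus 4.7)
The plan is to close the equivalence cyclically, by proving $(1) \Leftrightarrow (4) \Leftrightarrow (5)$ and $(2) \Leftrightarrow (3)$ algebraically, $(2) \Rightarrow (1)$ via a two-subtheory argument, and $(1) \Rightarrow (2)$ via a Solovay-style fixed point. The last direction is where the substance sits.

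For $(1) \Leftrightarrow (4)$, the direction $(1) \Rightarrow (4)$ is direct: given $\sigma \in \Th_{\Sigma_n}(T)$, $U \subseteq T$, and $U + \sigma \vdash \pi$ for some $\pi \in \Pi_n$, we have $T \vdash \sigma \land \pi$, so $\PA \vdash \sigma \lor \pi$ by $(1)$; combined with $U \vdash \sigma \to \pi$ this yields $U \vdash \pi$ propositionally. The converse $(4) \Rightarrow (1)$ uses the subtheory $U := \PA + (\sigma \to \pi)$ of $T$ (it is a subtheory since $T \vdash \sigma \to \pi$): condition $(4)$ applied to $\sigma$ gives $\sigma \in \Cons(\Pi_n, U)$, and $U + \sigma \vdash \pi$ yields $U \vdash \pi$, i.e., $\PA \vdash (\sigma \to \pi) \to \pi$, which is propositionally $\sigma \lor \pi$. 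The symmetric argument with $\PA + (\pi \to \sigma)$ gives $(1) \Leftrightarrow (5)$. The equivalence $(2) \Leftrightarrow (3)$ is immediate from the bijection $\varphi \mapsto \neg\varphi$ between $\Sigma_n \cap \HDCons(\Pi_n, \Sigma_n; T)$ and $\Pi_n \cap \HDCons(\Sigma_n, \Pi_n; T)$.

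For $(2) \Rightarrow (1)$, fix $\varphi \in \Sigma_n \cap \HDCons(\Pi_n, \Sigma_n; T)$ and suppose $T \vdash \sigma \land \pi$. Since $T \vdash \pi$, the theory $U_1 := \PA + (\varphi \to \pi)$ is a subtheory of $T$; $\varphi$ is $\Pi_n$-conservative over $U_1$ by hypothesis, and $U_1 + \varphi \vdash \pi$ gives $U_1 \vdash \pi$, i.e., $\PA \vdash \varphi \lor \pi$. Analogously, $U_2 := \PA + (\neg\varphi \to \sigma)$ is a subtheory of $T$ because $T \vdash \sigma$, and the $\Sigma_n$-conservativity of $\neg\varphi$ over $U_2$ together with $U_2 + \neg\varphi \vdash \sigma$ produces $\PA \vdash \neg\varphi \lor \sigma$. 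A propositional case split on $\varphi$ combines these into $\PA \vdash \sigma \lor \pi$.

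The main obstacle is $(1) \Rightarrow (2)$. My plan is to construct a $\Sigma_n$ sentence $\varphi$ by a Solovay-style fixed point of the form
\[
\PA \vdash \varphi \leftrightarrow \PR_T^{\Sigma_n}(\gn{\neg\varphi}) \prec \PR_T^{\Pi_n}(\gn{\varphi}),
\]
and to verify, for each subtheory $U \subseteq T$, that $\varphi \in \Cons(\Pi_n, U)$ and $\neg\varphi \in \Cons(\Sigma_n, U)$. The standard witness-comparison computation, together with Proposition \ref{small_rfn}, will give $T \vdash \pi$ whenever $U + \varphi \vdash \pi$ and, dually, $T \vdash \sigma$ whenever $U + \neg\varphi \vdash \sigma$. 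The hard step is to descend these $T$-level conclusions to the arbitrary subtheory $U$; here I would invoke $\SP$-conservativity of $T$ over $\PA$, reformulated through the already established conditions $(4)$ and $(5)$, perhaps refining the fixed point so that $\PR_T$ and $\PR_\PA$ are interleaved in a way that makes the key intermediate derivations go through already at the $\PA$-level. I expect the main subtlety to be choosing the exact shape of the fixed point so that both sides of the double conservativity descend uniformly to all subtheories while keeping $\varphi$ itself at the $\Sigma_n$-level.
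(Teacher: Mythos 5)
Note first that the paper does not prove this statement: it is imported as a Fact from \cite[Theorem 5.7]{KK}, so there is no in-paper proof to compare against. Judged on its own terms, your proposal is correct and complete for every direction except the one that carries all the weight. The cycle $(1)\Leftrightarrow(4)\Leftrightarrow(5)$ via the subtheories $\PA + (\sigma \to \pi)$ and $\PA + (\pi \to \sigma)$, the observation $(2)\Leftrightarrow(3)$, and the $(2)\Rightarrow(1)$ argument with $U_1 = \PA + (\varphi \to \pi)$ and $U_2 = \PA + (\neg\varphi \to \sigma)$ are all sound; the last of these is essentially the mixed-class analogue of the paper's Proposition \ref{G_G}. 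But $(1)\Rightarrow(2)$ is not proved: you write down the fixed point, assert that the standard computation gives the $T$-level conclusions, and then explicitly defer the descent to an arbitrary subtheory $U$ to an unspecified ``refinement'' of the fixed point. That descent is precisely the content of the theorem, so as it stands the proposal has a genuine gap.

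Concretely, the missing step is this (and your fixed point $\PA \vdash \varphi \leftrightarrow \PR_T^{\Sigma_n}(\gn{\neg\varphi}) \prec \PR_T^{\Pi_n}(\gn{\varphi})$ already suffices; no interleaving of $\PR_T$ with $\PR_\PA$ is needed). By the small-reflection computation of Lemma \ref{Lem_fp}, $T$ proves, for every $p$, both $\forall z \leq \num{p}\, \neg\Prf_T^{\Sigma_n}(\gn{\neg\varphi},z)$, which is $\Pi_n$, and $\forall z \leq \num{p}\, \neg\Prf_T^{\Pi_n}(\gn{\varphi},z)$, which is $\Sigma_n$. Their conjunction is a $\Sigma_n \land \Pi_n$ theorem of $T$, so $\SP$-conservativity of $T$ over $\PA$ yields the disjunction
\[
\PA \vdash \forall z \leq \num{p}\, \neg\Prf_T^{\Pi_n}(\gn{\varphi},z) \ \lor\ \forall z \leq \num{p}\, \neg\Prf_T^{\Sigma_n}(\gn{\neg\varphi},z).
\]
Now if $U + \varphi \vdash \pi$ with $\pi \in \Pi_n$, then $T + \neg\pi \vdash \neg\varphi$, so by Proposition \ref{small_rfn} some $p$ satisfies $\PA + \neg\pi \vdash \Prf_T^{\Sigma_n}(\gn{\neg\varphi},\num{p})$; this refutes the second disjunct, so $\PA + \neg\pi$ proves the first, and the two facts together witness the fixed point, giving $\PA + \neg\pi \vdash \varphi$ and hence $U \vdash \pi$. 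The dual argument with a $\Pi_n$-proof witness $q$ of $\varphi$ handles $\neg\varphi \in \Cons(\Sigma_n, U)$. This is exactly the pattern the present paper uses in, e.g., the proof of Theorem \ref{Pi} (where $\Pi_n$-conservativity of $T$ over $\PA$ is used to push a small-reflection fact down to $\PA$); the point you did not locate is that $\SP$-conservativity is tailor-made to split the \emph{pair} of small-reflection facts of opposite complexities, after which the known proof witness eliminates one disjunct inside $\PA + \neg\pi$ (resp.\ $\PA + \neg\sigma$). Your fallback of routing the descent through conditions $(4)$ and $(5)$ would not work directly, since those give hereditary conservativity of $T$-theorems, not their $\PA$-provability.
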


Following Fact \ref{DA0}, in this section, we investigate the existence of $\Theta$ sentences that are hereditarily doubly $(\Gamma, \Lambda)$-conservative over $T$ for several triples $(\Theta, \Gamma, \Lambda)$. 
Each of the following subsections is distinguished according to $\Theta$, and in each subsection, we will provide a table summarizing the situation with respect to $\Theta$.
However, for some triples $(\Theta, \Gamma, \Lambda)$, the condition $\Theta \cap \HDCons(\Gamma, \Lambda; T) \neq \emptyset$ may be situated in a halfway position between the prominent conditions, and to describe such situations we introduce the following definition. 

\begin{defn}\label{Def_truple}
Let $\Gamma \neq \Delta_n$. 
We say that a theory $T$ is a \textit{$\Theta$-$(\Gamma, \Lambda)$ theory} iff there exists a sentence $\varphi$ satisfying the following three conditions: 
\begin{enumerate}
    \item $\varphi \in \Theta$. 

    \item $T + \varphi$ is $\Gamma$-conservative over $\PA$. 

    \item $\neg \varphi \in \HCons(\Lambda, T)$.
\end{enumerate}
\end{defn}

We then immediately get the following proposition. 

\begin{prop}\label{Prop_triple}
If $T$ is a $\Theta$-$(\Gamma, \Lambda)$ theory, then $\Theta \cap \HDCons(\Gamma, \Lambda; T) \neq \emptyset$. 
\end{prop}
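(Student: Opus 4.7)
The plan is to verify that the very same witness $\varphi$ supplied by Definition \ref{Def_truple} already lies in $\HDCons(\Gamma, \Lambda; T)$. So fix $\varphi \in \Theta$ satisfying the three conditions there, and let $U$ be an arbitrary subtheory with $T \vdash U \vdash \PA$. By the definition of $\HDCons$, it suffices to check $\varphi \in \DCons(\Gamma, \Lambda; U)$, that is, $\varphi \in \Cons(\Gamma, U)$ and $\neg \varphi \in \Cons(\Lambda, U)$.

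The second of these requirements is essentially free: condition 3 of Definition \ref{Def_truple} states that $\neg \varphi \in \HCons(\Lambda, T) = \bigcap\{\Cons(\Lambda, V) \mid T \vdash V \vdash \PA\}$, and since $U$ belongs to this family, $\neg \varphi \in \Cons(\Lambda, U)$ at once.

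For the first requirement, I would argue as follows. Suppose $U + \varphi \vdash \gamma$ with $\gamma \in \Gamma$ (or, in the case $\Gamma = \SP$, suppose $U + \varphi \vdash \sigma \land \pi$ with $\sigma \in \Sigma_n$ and $\pi \in \Pi_n$). Since $T \vdash U$, we also have $T + \varphi \vdash \gamma$ (resp.~$T + \varphi \vdash \sigma \land \pi$). By condition 2 of Definition \ref{Def_truple}, $T + \varphi$ is $\Gamma$-conservative over $\PA$, so $\PA \vdash \gamma$ (resp.~$\PA \vdash \sigma \lor \pi$). Because $\PA$ is a subtheory of $U$, we conclude $U \vdash \gamma$ (resp.~$U \vdash \sigma \lor \pi$), and hence $\varphi \in \Cons(\Gamma, U)$.

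There is no real obstacle; the hypothesis $\Gamma \neq \Delta_n$ in Definition \ref{Def_truple} enters only to guarantee that $\Gamma$-conservativity is defined by provability of sentences in a fixed syntactic class, so that the passage from $T + \varphi$ down to $U + \varphi$ and back up from $\PA$ to $U$ is automatic. The $\Delta_n$ case would have to be treated separately because $\Delta_n$-conservativity of $T+\varphi$ over $\PA$ refers to $\Th(T+\varphi) \cap \Th(\PA)$, a set that can shrink when $T$ is replaced by $U$.
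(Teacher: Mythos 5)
Your proof is correct and is exactly the ``immediate'' argument the paper leaves implicit: condition 3 of Definition \ref{Def_truple} gives $\neg\varphi \in \Cons(\Lambda, U)$ for every subtheory $U$ directly, and the sandwich $U + \varphi \vdash \gamma \Rightarrow T + \varphi \vdash \gamma \Rightarrow \PA \vdash \gamma \Rightarrow U \vdash \gamma$ (with the $\sigma \land \pi$ / $\sigma \lor \pi$ variant for $\Gamma = \SP$) gives $\varphi \in \Cons(\Gamma, U)$. Your closing remark on why $\Gamma = \Delta_n$ is excluded is also essentially right (the relevant class $\Delta_n(U)$ varies with $U$, which is why the paper introduces the separate notion in Definition \ref{Def_truple_Delta}).
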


It will be proved that the converse implication of Proposition \ref{Prop_triple} is not case in general (Corollary \ref{Cor_triple}). 
The following Propositions \ref{DA_P}, \ref{Pi_S}, and \ref{Pi_P} provide some examples of triples $(\Theta, \Gamma, \Lambda)$ such that the condition `$T$ is a $\Theta$-$(\Gamma, \Lambda)$ theory' is not situated in a halfway position.

\begin{prop}\label{DA_P}
The following are equivalent: 
\begin{enumerate}
    \item $T$ is $\SP$-conservative over $\PA$. 

    \item $T$ is a $\Pi_n$-$(\SP, \Pi_n)$ theory. 
\end{enumerate}
\end{prop}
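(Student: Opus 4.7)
The plan is to prove both directions of the equivalence separately, with the forward direction being the nontrivial one.

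For the backward implication $(2 \Rightarrow 1)$, the argument is essentially immediate. Given $\varphi$ witnessing that $T$ is a $\Pi_n$-$(\SP, \Pi_n)$ theory, any $\Sigma_n \land \Pi_n$-sentence provable in $T$ remains provable in the extension $T + \varphi$, so the $\SP$-conservativity of $T + \varphi$ over $\PA$ transfers directly to $T$.

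For the forward implication $(1 \Rightarrow 2)$, I would apply Fact \ref{DA0} to obtain a sentence $\varphi \in \Pi_n \cap \HDCons(\Sigma_n, \Pi_n; T)$. Condition (1) of Definition \ref{Def_truple} then holds by construction, and condition (3) is immediate from the hereditary double conservativity, since $\neg \varphi$ is in particular $\Pi_n$-conservative over every subtheory of $T$, i.e., $\neg \varphi \in \HCons(\Pi_n, T)$. The heart of the proof is verifying condition (2), namely that $T + \varphi$ is $\SP$-conservative over $\PA$.

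To establish (2), I would use a witness-comparison transformation modeled on the proof of Theorem \ref{Doubly_and_SP}. Assume $T + \varphi \vdash \sigma \land \pi$ for some $\sigma \in \Sigma_n$ and $\pi \in \Pi_n$. Since $\PA \vdash (\sigma \land \pi) \to (\sigma \prec \neg \pi)$, we get $T + \varphi \vdash \sigma \prec \neg \pi$. Because $\sigma \prec \neg \pi \in \Sigma_n$ and $\varphi \in \Cons(\Sigma_n, T)$ from the hereditary double conservativity, this yields $T \vdash \sigma \prec \neg \pi$. By Proposition \ref{wc}.3, $T \vdash \neg(\neg \pi \preccurlyeq \sigma)$, which is a $\Pi_n$ sentence. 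Hence $T$ proves the $\Sigma_n \land \Pi_n$-sentence $(\sigma \prec \neg \pi) \land \neg(\neg \pi \preccurlyeq \sigma)$, and the $\SP$-conservativity of $T$ over $\PA$ gives $\PA \vdash (\sigma \prec \neg \pi) \lor \neg(\neg \pi \preccurlyeq \sigma)$. Finally, both disjuncts provably imply $\sigma \lor \pi$ in $\PA$: the first trivially via $\sigma \prec \neg \pi \to \sigma$, and the second by unpacking $\neg(\neg \pi \preccurlyeq \sigma)$ to show that any witness for $\neg \pi$ is preceded by a witness for $\sigma$, so $\neg \pi \to \sigma$.

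The main obstacle is recognizing the correct witness-comparison transformation that converts the mixed hypothesis $T + \varphi \vdash \sigma \land \pi$ into a genuine $\Sigma_n \land \Pi_n$ provability statement about $T$ alone; once this conversion is in place, the $\SP$-conservativity of $T$ over $\PA$ completes the argument, and the remaining verifications inside $\PA$ are routine unpackings of the definitions of $\prec$ and $\preccurlyeq$.
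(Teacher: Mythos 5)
Your proposal is correct and follows essentially the same route as the paper: invoke Fact \ref{DA0} to get $\varphi \in \Pi_n \cap \HDCons(\Sigma_n, \Pi_n; T)$, then use the witness-comparison trick to turn $T + \varphi \vdash \sigma \land \pi$ into $T \vdash (\sigma \prec \neg\pi) \land \neg(\neg\pi \preccurlyeq \sigma)$ and apply the $\SP$-conservativity of $T$ over $\PA$. The only (immaterial) difference is that the paper first collapses the resulting disjunction to $\neg(\neg\pi \preccurlyeq \sigma)$ via Proposition \ref{wc}.3 before concluding $\PA \vdash \sigma \lor \pi$, whereas you derive $\sigma \lor \pi$ from each disjunct separately.
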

\begin{proof}
$(1 \Rightarrow 2)$: 
Suppose that $T$ is $\SP$-conservative over $\PA$. 
By Fact \ref{DA0}, we find a $\Pi_n$ sentence $\varphi$ such that $\varphi \in \HDCons(\Sigma_n, \Pi_n; T)$. 
It suffices to prove that $T + \varphi$ is $\SP$-conservative over $\PA$. 
Let $\sigma \in \Sigma_n$ and $\pi \in \Pi_n$ be such that $T + \varphi \vdash \sigma \land \pi$. 
Then $T + \varphi \vdash \sigma \prec \neg \pi$. 
Since $\varphi \in \Cons(\Sigma_n, T)$, we have $T \vdash \sigma \prec \neg \pi$. 
Since $T$ also proves $\neg (\neg \pi \preccurlyeq \sigma)$ and $T$ is $\SP$-conservative over $\PA$, we get $\PA \vdash (\sigma \prec \neg \pi) \lor \neg (\neg \pi \preccurlyeq \sigma)$. 
Thus, $\PA \vdash \neg (\neg \pi \preccurlyeq \sigma)$. 
It follows that $\PA \vdash \sigma \lor \pi$. 

\medskip

$(2 \Rightarrow 1)$: Trivial. 
\end{proof}

\begin{prop}\label{Pi_S}
The following are equivalent: 
\begin{enumerate}
    \item $T$ is $\Sigma_n$-conservative over $\PA$. 

    \item $T$ is a $\Pi_n$-$(\Sigma_n, \Pi_n)$ theory. 
\end{enumerate}
\end{prop}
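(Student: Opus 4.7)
The plan is to prove the two directions separately, noting that $(2) \Rightarrow (1)$ is essentially trivial, while $(1) \Rightarrow (2)$ reduces quickly to Fact \ref{DA0} together with the hypothesis of $\Sigma_n$-conservativity.

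For $(2) \Rightarrow (1)$, let $\varphi$ be any witness that $T$ is a $\Pi_n$-$(\Sigma_n, \Pi_n)$ theory. Since $T$ is a subtheory of $T + \varphi$, any $\Sigma_n$ sentence $\sigma$ provable in $T$ is also provable in $T + \varphi$, and hence provable in $\PA$ by condition 2 of Definition \ref{Def_truple}. Thus $T$ is $\Sigma_n$-conservative over $\PA$.

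For $(1) \Rightarrow (2)$, I will first observe that by Fact \ref{Fact_DA}.1, $\Sigma_n$-conservativity of $T$ over $\PA$ yields $\SP$-conservativity of $T$ over $\PA$. Then Fact \ref{DA0} ($1 \Rightarrow 3$) produces a $\Pi_n$ sentence $\varphi \in \HDCons(\Sigma_n, \Pi_n; T)$. I claim this $\varphi$ already witnesses that $T$ is a $\Pi_n$-$(\Sigma_n, \Pi_n)$ theory. Clearly $\varphi \in \Pi_n$, and $\neg \varphi \in \HCons(\Pi_n, T)$ is immediate from the definition of $\HDCons$ applied to the $\Lambda = \Pi_n$ side. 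It remains to verify that $T + \varphi$ is $\Sigma_n$-conservative over $\PA$: given any $\Sigma_n$ sentence $\sigma$ with $T + \varphi \vdash \sigma$, the fact $\varphi \in \Cons(\Sigma_n, T)$ (taking $U = T$ in the $\HDCons$ intersection) yields $T \vdash \sigma$, and then the hypothesis of $\Sigma_n$-conservativity of $T$ over $\PA$ gives $\PA \vdash \sigma$.

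There is no real obstacle here: unlike in the proof of Proposition \ref{DA_P}, the stronger hypothesis of $\Sigma_n$-conservativity (as opposed to $\SP$-conservativity) lets us bypass any witness-comparison manipulation and simply chain the two conservativity properties together. The argument is just a two-line direct transfer.
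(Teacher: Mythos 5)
Your proposal is correct and follows the paper's own argument exactly: both directions pass from $\Sigma_n$-conservativity to $\SP$-conservativity via Fact \ref{Fact_DA}, invoke Fact \ref{DA0} to obtain a $\Pi_n$ sentence $\varphi \in \HDCons(\Sigma_n, \Pi_n; T)$, and then chain $\varphi \in \Cons(\Sigma_n, T)$ with the hypothesis to get $\Sigma_n$-conservativity of $T + \varphi$ over $\PA$. You merely spell out the steps the paper labels ``easy to see'' and ``trivial''.
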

\begin{proof}
$(1 \Rightarrow 2)$: 
Suppose that $T$ is $\Sigma_n$-conservative over $\PA$. 
Since $T$ is $\SP$-conservative over $\PA$, we find a $\Pi_n$ sentence $\varphi$ such that $\varphi \in \HDCons(\Sigma_n, \Pi_n; T)$ by Fact \ref{DA0}. 
It is easy to see that $T + \varphi$ is $\Sigma_n$-conservative over $\PA$. 

\medskip

$(2 \Rightarrow 1)$: Trivial. 
\end{proof}

The following proposition is also proved in the same way as in the proof of Proposition \ref{Pi_S}. 

\begin{prop}\label{Pi_P}
The following are equivalent: 
\begin{enumerate}
    \item $T$ is $\Pi_n$-conservative over $\PA$. 

    \item $T$ is a $\Sigma_n$-$(\Pi_n, \Sigma_n)$ theory. 
\end{enumerate}
\end{prop}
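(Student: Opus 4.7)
The plan is to mirror the proof of Proposition \ref{Pi_S} with the roles of $\Sigma_n$ and $\Pi_n$ swapped throughout. For the direction $(1 \Rightarrow 2)$, I start from the assumption that $T$ is $\Pi_n$-conservative over $\PA$. By Fact \ref{Fact_DA}, this implies $T$ is $\SP$-conservative over $\PA$, and then Fact \ref{DA0} (the equivalence between items 1 and 2) yields a $\Sigma_n$ sentence $\varphi$ with $\varphi \in \HDCons(\Pi_n, \Sigma_n; T)$. This $\varphi$ automatically satisfies conditions 1 and 3 of Definition \ref{Def_truple}: it is $\Sigma_n$, and $\neg \varphi \in \HCons(\Sigma_n, T)$ since $\varphi \in \DCons(\Pi_n, \Sigma_n; U)$ for every subtheory $U$ of $T$.

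The remaining work is to verify condition 2, namely that $T + \varphi$ is $\Pi_n$-conservative over $\PA$. This is the core step, and it is genuinely short: if $T + \varphi \vdash \pi$ for some $\pi \in \Pi_n$, then $\varphi \in \Cons(\Pi_n, T)$ gives $T \vdash \pi$, and then the assumed $\Pi_n$-conservativity of $T$ over $\PA$ yields $\PA \vdash \pi$. So condition 2 follows directly once we have the sentence $\varphi$ from Fact \ref{DA0}.

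The direction $(2 \Rightarrow 1)$ is essentially trivial: if some $\varphi$ witnesses that $T$ is a $\Sigma_n$-$(\Pi_n, \Sigma_n)$ theory, then any $\Pi_n$ sentence provable in $T$ is a fortiori provable in $T + \varphi$, and hence provable in $\PA$ by condition 2 on $\varphi$. So $T$ itself is $\Pi_n$-conservative over $\PA$.

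I do not expect any real obstacle here; the argument is a straightforward dualization of Proposition \ref{Pi_S}, and the only place where the specific direction ($\Pi_n$ versus $\Sigma_n$) matters is in the small verification that the conservation property of $\varphi$ over $T$ combines cleanly with the conservation of $T$ over $\PA$, which it does in the same way in both cases.
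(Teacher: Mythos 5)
Your argument is correct and is exactly the paper's intended proof: the paper states only that Proposition \ref{Pi_P} ``is proved in the same way as in the proof of Proposition \ref{Pi_S},'' and your dualization (invoking Fact \ref{Fact_DA} to pass to $\SP$-conservativity, Fact \ref{DA0} to obtain $\varphi \in \Sigma_n \cap \HDCons(\Pi_n, \Sigma_n; T)$, and the short composition of conservativities for condition 2) is precisely that argument.
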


Before proceeding with the analysis for each individual $\Theta$, we prove the following useful proposition.

\begin{prop}\label{G_G}
Let $\Gamma \in \{\SP, \Sigma_n, \Pi_n, \Sigma_n \land \Pi_n, \mathcal{B}(\Sigma_n)\}$.
If $\HDCons(\Gamma, \Gamma; T) \neq \emptyset$, then $T$ is $\Gamma$-conservative over $\PA$.
\end{prop}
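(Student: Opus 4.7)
Fix some $\varphi \in \HDCons(\Gamma, \Gamma; T)$. The plan is to route any would-be $\Gamma$-witness to non-conservativity of $T$ over $\PA$ through two auxiliary subtheories obtained by absorbing the witness into $\varphi \to (\cdot)$ and $\neg\varphi \to (\cdot)$, and then read off $\PA$-provability by invoking hereditary double conservativity twice.

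More concretely, suppose we have a $\Gamma$-conservation challenge for $T$ over $\PA$: for $\Gamma \neq \SP$, let $\gamma \in \Gamma$ with $T \vdash \gamma$ and the target conclusion is $\PA \vdash \gamma$; for $\Gamma = \SP$ take $\gamma \equiv \sigma \wedge \pi$ with $\sigma \in \Sigma_n$ and $\pi \in \Pi_n$ and $T \vdash \sigma \wedge \pi$, with target $\PA \vdash \sigma \vee \pi$. Consider the auxiliary theories
\[
U_1 := \PA + (\varphi \to \gamma), \qquad U_2 := \PA + (\neg\varphi \to \gamma).
\]
Since $T \vdash \gamma$, both $\varphi \to \gamma$ and $\neg\varphi \to \gamma$ are $T$-provable, and so $T \vdash U_i \vdash \PA$ for $i = 1, 2$. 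Hereditary double conservativity thus gives $\varphi \in \DCons(\Gamma, \Gamma; U_i)$ for $i = 1,2$. Because $U_1 + \varphi \vdash \gamma$ by modus ponens, $\Gamma$-conservativity of $U_1 + \varphi$ over $U_1$ delivers $U_1 \vdash \gamma$ (or $U_1 \vdash \sigma \vee \pi$ in the $\SP$ case, since that is what $\SP$-conservativity produces from a $\Sigma_n \wedge \Pi_n$ consequence). A symmetric argument applied to $U_2$ and $\neg\varphi \in \Cons(\Gamma, U_2)$ yields $U_2 \vdash \gamma$ (resp. $U_2 \vdash \sigma \vee \pi$).

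The argument closes by propositional logic. Unwrapping the added axioms of $U_1, U_2$ via the deduction theorem, the derivations $U_1 \vdash \gamma$ and $U_2 \vdash \gamma$ become $\PA \vdash (\varphi \to \gamma) \to \gamma$ and $\PA \vdash (\neg\varphi \to \gamma) \to \gamma$, which classically simplify to $\PA \vdash \varphi \vee \gamma$ and $\PA \vdash \neg\varphi \vee \gamma$, whence $\PA \vdash \gamma$. In the $\SP$ case, the same unwrapping gives $\PA \vdash \varphi \vee \sigma \vee \pi$ and $\PA \vdash \neg\varphi \vee \sigma \vee \pi$, hence $\PA \vdash \sigma \vee \pi$, which is exactly what $\SP$-conservativity of $T$ over $\PA$ demands. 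I foresee no serious obstacle here; the only bookkeeping point is that in the $\SP$ case the conservation step produces the weaker disjunction $\sigma \vee \pi$ rather than $\sigma \wedge \pi$, but this disjunction is precisely what is required both at the intermediate step (for each $U_i$) and at the final combination over $\PA$.
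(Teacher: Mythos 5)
Your proof is correct and takes essentially the same approach as the paper: the paper's auxiliary subtheory $\PA + (\psi \lor \neg\varphi)$ is exactly your $U_1 = \PA + (\varphi \to \gamma)$, and the only cosmetic difference is that for $\Gamma \neq \SP$ the paper applies the conservativity of $\neg\varphi$ over $\PA$ directly instead of introducing the second auxiliary theory $U_2$. Your uniformly symmetric treatment is in fact what is needed to make the $\SP$ case go through (since $\SP$-conservativity of $\neg\varphi$ only acts on $\Sigma_n \land \Pi_n$ consequences, one cannot finish from $\PA + \neg\varphi \vdash \sigma \lor \pi$ alone), so your write-up correctly fills in the case the paper leaves as ``similar.''
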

\begin{proof}
We give only a proof of the case $\Gamma \neq \SP$. 
The case $\Gamma = \SP$ is proved in the similar way. 
Let $\varphi$ be such that $\varphi \in \HCons(\Gamma, T)$ and $\neg \varphi \in \HCons(\Gamma, T)$. 
Suppose that $\psi \in \Gamma$ and $T \vdash \psi$. 
Since $\PA + (\psi \lor \neg \varphi)$ is a subtheory of $T$ and $\PA + (\psi \lor \neg \varphi) + \varphi \vdash \psi$, we obtain $\PA + (\psi \lor \neg \varphi) \vdash \psi$ because $\varphi \in \HDCons(\Gamma, T)$. 
Hence, $\PA + \neg \varphi \vdash \psi$. 
It also follows from $\neg \varphi \in \HCons(\Gamma, T)$ that $\PA \vdash \psi$. 
\end{proof}

\subsection{$\Sigma_n$ sentences}

By combining Fact \ref{DA0} and Proposition \ref{G_G}, we obtain the following corollary.

\begin{cor}\label{DA1}
The following are equivalent: 
\begin{enumerate}
    \item $T$ is $\SP$-conservative over $\PA$. 

    \item $\HDCons(\SP, \SP; T) \neq \emptyset$. 
\end{enumerate}
\end{cor}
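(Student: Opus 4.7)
The plan is to derive Corollary \ref{DA1} entirely from the previously established results, namely Fact \ref{DA0}, Fact \ref{Fact_DA}, and Proposition \ref{G_G}. Both implications are essentially bookkeeping, so I would present it as a short direct argument rather than any new fixed-point construction.

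For the forward direction $(1 \Rightarrow 2)$, I would argue as follows. Assuming that $T$ is $\SP$-conservative over $\PA$, the equivalence of items (1) and (2) in Fact \ref{DA0} provides a $\Sigma_n$ sentence $\varphi$ with $\varphi \in \HDCons(\Pi_n, \Sigma_n; T)$. Unpacking the definition of $\HDCons$, this means that for every subtheory $U$ with $T \vdash U \vdash \PA$, the sentence $\varphi$ is $\Pi_n$-conservative over $U$ and $\neg \varphi$ is $\Sigma_n$-conservative over $U$. Applying Fact \ref{Fact_DA}.2 to $U + \varphi$ over $U$ and Fact \ref{Fact_DA}.1 to $U + \neg\varphi$ over $U$ at each such $U$ upgrades both conservativity properties to $\SP$-conservativity. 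Hence $\varphi \in \HDCons(\SP, \SP; T)$, and in particular this set is non-empty.

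For the backward direction $(2 \Rightarrow 1)$, I would simply invoke Proposition \ref{G_G} applied with $\Gamma = \SP$, which directly asserts that non-emptiness of $\HDCons(\SP, \SP; T)$ implies that $T$ is $\SP$-conservative over $\PA$.

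There is no real obstacle here; both implications are immediate. The only small subtlety worth spelling out explicitly is in the forward direction: the upgrade from $\Pi_n$- or $\Sigma_n$-conservativity to $\SP$-conservativity is available at every subtheory $U$ because Fact \ref{Fact_DA} is a uniform statement about arbitrary pairs of theories, so hereditariness is preserved when we pass to $\SP$-conservativity.
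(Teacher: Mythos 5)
Your proposal is correct and matches the paper's own derivation, which obtains the corollary precisely by combining Fact \ref{DA0} with Proposition \ref{G_G} (the upgrade via Fact \ref{Fact_DA} being the implicit glue). Nothing is missing.
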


We refine Corollary \ref{DA1} by taking into account $\Delta_n$ as follows. 

\begin{thm}\label{HDelta}
The following are equivalent: 
\begin{enumerate}
    
    \item $T$ is $\SP$-conservative over $\PA$. 
    
    \item $\HDCons(\Delta_n, \Delta_n; T) \neq \emptyset$. 

    \item For any subtheory $U$ of $T$, we have that $T$ is $\Delta_n(U)$-conservative over $U$.

\end{enumerate}
\end{thm}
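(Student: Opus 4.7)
The plan is to establish the cycle $(1) \Rightarrow (2) \Rightarrow (3) \Rightarrow (1)$. The implication $(1) \Rightarrow (2)$ is essentially free: Corollary \ref{DA1} hands us a sentence $\varphi \in \HDCons(\SP, \SP; T)$ from (1), and Fact \ref{Fact_DA}.3 tells us that for every subtheory $U$, $\SP$-conservativity of $U + \varphi$ (respectively $U + \neg\varphi$) over $U$ already gives $\Delta_n$-conservativity. Hence $\varphi \in \HDCons(\Delta_n, \Delta_n; T)$.

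For $(2) \Rightarrow (3)$, I would take any witness $\varphi \in \HDCons(\Delta_n, \Delta_n; T)$ and perform a case split on $\varphi$ via two auxiliary subtheories. Given a subtheory $U$ of $T$ and a $\Delta_n(U)$ sentence $\delta$ with $T \vdash \delta$, the key move is to introduce $U_1 := U + (\varphi \to \delta)$ and $U_2 := U + (\neg\varphi \to \delta)$; both are subtheories of $T$ because $T \vdash \delta$, and $\delta$ remains $\Delta_n$ over each $U_i$ since $U \subseteq U_i$. From $U_1 + \varphi \vdash \delta$ and $\varphi \in \Cons(\Delta_n, U_1)$ I would derive $U_1 \vdash \delta$, which is propositionally $U \vdash \varphi \lor \delta$; symmetrically, applying $\neg \varphi \in \Cons(\Delta_n, U_2)$ to $U_2 + \neg\varphi \vdash \delta$ gives $U \vdash \neg\varphi \lor \delta$. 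Combining these two yields $U \vdash \delta$, which is exactly (3).

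For $(3) \Rightarrow (1)$ I would argue by contrapositive. Suppose $T$ is not $\SP$-conservative over $\PA$, witnessed by $\sigma \in \Sigma_n$ and $\pi \in \Pi_n$ with $T \vdash \sigma \land \pi$ but $\PA \nvdash \sigma \lor \pi$. The natural subtheory to consider is $U := \PA + (\sigma \leftrightarrow \pi)$, which is a subtheory of $T$ because $T \vdash \sigma \leftrightarrow \pi$. The biconditional extension is chosen precisely so that $\sigma$ becomes $\Delta_n(U)$: it is $\Sigma_n$ and $U$-provably equivalent to $\pi \in \Pi_n$. Now $T \vdash \sigma$, yet a short propositional check shows $U \vdash \sigma$ is equivalent to $\PA \vdash (\sigma \leftrightarrow \pi) \to \sigma$, i.e.\ $\PA \vdash \sigma \lor \pi$, which contradicts our assumption. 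Hence $U \nvdash \sigma$, refuting $\Delta_n(U)$-conservativity of $T$ over $U$.

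I do not anticipate any serious technical obstacle; the content of the theorem lies in identifying the right auxiliary subtheories. The paired extensions $U + (\pm\varphi \to \delta)$ in $(2) \Rightarrow (3)$ are the natural way to convert double $\Delta_n$-conservativity into genuine $\Delta_n(U)$-conservativity of $T$ over $U$, while the biconditional trick $\PA + (\sigma \leftrightarrow \pi)$ in $(3) \Rightarrow (1)$ is tailor-made to embed a failure of $\SP$-conservativity into $\Delta_n(U)$. The mildly subtle point worth double-checking is that in $(2) \Rightarrow (3)$ the class $\Delta_n(U_i)$ really contains $\delta$, which follows immediately from $\Delta_n(U) \subseteq \Delta_n(U_i)$, but it is the one place where Definition \ref{cons}'s dependence on $\Th(U) \cap \Th(V)$ must be tracked carefully.
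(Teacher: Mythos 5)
Your proof is correct, and it follows the same overall cycle $(1)\Rightarrow(2)\Rightarrow(3)\Rightarrow(1)$ as the paper; the interesting divergence is in the last leg. For $(2)\Rightarrow(3)$ you and the paper use the same underlying trick of padding $U$ with a $T$-provable propositional combination of $\varphi$ and $\delta$: the paper works with the single auxiliary theory $U + (\varphi \lor \delta)$, first applying $\neg\varphi \in \HCons(\Delta_n, T)$ to get $U + (\varphi\lor\delta) \vdash \delta$, hence $U + \varphi \vdash \delta$, and then applying $\varphi \in \HCons(\Delta_n, T)$; your symmetric pair $U + (\varphi\to\delta)$, $U + (\neg\varphi\to\delta)$ yields $U \vdash \varphi\lor\delta$ and $U \vdash \neg\varphi\lor\delta$ and combines them at the end --- the two arguments are interchangeable. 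For $(3)\Rightarrow(1)$ your route is genuinely different and more elementary: given $T \vdash \sigma\land\pi$, the paper passes to the witness-comparison formula $\sigma \prec \neg\pi$, which is $\Delta_n$ over the subtheory $\PA + (\sigma\lor\neg\pi)$, and extracts $\PA \vdash \sigma\lor\pi$ from $\PA + (\sigma\lor\neg\pi) \vdash \sigma\prec\neg\pi$; you instead note that $\sigma$ itself is already $\Delta_n$ over $\PA + (\sigma\leftrightarrow\pi)$ and that $\PA \vdash (\sigma\leftrightarrow\pi)\to\sigma$ is propositionally equivalent to $\PA \vdash \sigma\lor\pi$ (a two-line truth-table check). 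This avoids the $\prec/\preccurlyeq$ machinery entirely and makes the step purely propositional; the paper's version has the mild advantage of reusing the witness-comparison idiom that recurs throughout the rest of the paper, but yours is cleaner in isolation. All the side conditions you flag --- that $U + (\varphi\to\delta)$ and $\PA + (\sigma\leftrightarrow\pi)$ really are subtheories of $T$ containing $\PA$, and that $\Delta_n(U) \subseteq \Delta_n(U_i)$ so Definition \ref{cons}'s dependence on $\Th(U)\cap\Th(V)$ causes no trouble --- check out.
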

\begin{proof}
$(1 \Rightarrow 2)$: This is immediate from Facts \ref{Fact_DA} and \ref{DA0}. 

\medskip

$(2 \Rightarrow 3)$: 
Suppose $\varphi \in \HDCons(\Delta_n, \Delta_n; T)$. 
Let $U$ be a subtheory of $T$ and $\delta \in \Delta_n(U)$ be such that $T \vdash \delta$.
Since $U + (\varphi \vee \delta)$ is a subtheory of $T$
and $U + (\varphi \vee \delta) + \neg \varphi \vdash \delta$, we obtain $U+ (\varphi \vee \delta) \vdash \delta$ because $\neg \varphi \in \HCons(\Delta_n,T)$. 
In particular, $U+ \varphi \vdash \delta$. 
Since $\varphi \in \HCons(\Delta_n,T)$, it follows that $U \vdash \delta$.

\medskip

$(3 \Rightarrow 1)$:
Suppose the property stated in the third clause holds. 
Let $\sigma \in \Sigma_n$ and $\pi \in \Pi_n$ be such that $T \vdash \sigma \land \pi$. 
We have $T \vdash \sigma \prec \neg \pi$. 
Since $\PA + (\sigma \lor \neg \pi) \vdash \sigma \prec \neg \pi \leftrightarrow \neg (\neg \pi \preccurlyeq \sigma)$, we have that $\sigma \prec \neg \pi$ is $\Delta_n$ over a subtheory $\PA + (\sigma \lor \neg \pi)$ of $T$. 
By the supposition, $\PA + (\sigma \lor \neg \pi) \vdash \sigma \prec \neg \pi$. 
In particular, $\PA + \neg \pi \vdash \sigma$, and equivalently, $\PA \vdash \sigma \lor \pi$. 
\end{proof}

\begin{table}[ht]
\centering
\scriptsize{
\begin{tabular}{|c||c|c|c|}
\hline
\diagbox{$\Gamma$}{$\Lambda$} & $\Delta_n$ & $\SP$ & $\Sigma_n$ \\
\hline

\hline 
$\Pi_n$
& \multicolumn{1}{>{\columncolor[gray]{0.95}}c|}{$\SP$}
& \multicolumn{1}{>{\columncolor[gray]{0.95}}c|}{$\SP$}
& \multicolumn{1}{>{\columncolor[gray]{0.95}}c|}{\begin{tabular}{c} $\SP$ \\ Fact \ref{DA0} \end{tabular}} \\

\hline 
$\SP$
& \multicolumn{1}{>{\columncolor[gray]{0.95}}c|}{\begin{tabular}{c} $\SP$ \\ Cor.~\ref{DA1} \end{tabular}}
& \multicolumn{1}{>{\columncolor[gray]{0.95}}c|}{$\SP$}
& \multicolumn{1}{>{\columncolor[gray]{0.95}}c|}{$\SP$} \\

\hline 
$\Delta_n$
& \multicolumn{1}{>{\columncolor[gray]{0.95}}c|}{\begin{tabular}{c} $\SP$ \\ Thm.~\ref{HDelta} \end{tabular}}
& \multicolumn{1}{>{\columncolor[gray]{0.95}}c|}{$\SP$}
& \multicolumn{1}{>{\columncolor[gray]{0.95}}c|}{$\SP$} \\

\hline
\end{tabular}
}
\caption{The existence of a $\Sigma_n$ sentence that is hereditarily doubly $(\Gamma, \Lambda)$-conservative over $T$}\label{table3}
\end{table}

\subsection{$\Sigma_n \land \Pi_n$ sentences}

We show that $\SP$-conservativity also plays an important role in the $\Theta = \Sigma_n \land \Pi_n$ case.

\begin{thm}\label{DA2}
The following are equivalent: 
\begin{enumerate}
    \item $T$ is $\SP$-conservative over $\PA$. 

    \item $(\Sigma_n \land \Pi_n) \cap \HDCons(\Delta_n, \Sigma_n \land \Pi_n; T) \neq \emptyset$. 
\end{enumerate}
\end{thm}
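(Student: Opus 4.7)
The plan is to prove the two directions independently; the harder one is $(1 \Rightarrow 2)$, where a subtle subtheory mismatch forces a two-step construction built on Fact \ref{DA0}.

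For $(2 \Rightarrow 1)$, fix $\varphi \in (\Sigma_n \land \Pi_n) \cap \HDCons(\Delta_n, \Sigma_n \land \Pi_n; T)$ and let $\sigma \in \Sigma_n$, $\pi \in \Pi_n$ with $T \vdash \sigma \land \pi$; I want $\PA \vdash \sigma \lor \pi$. Proposition \ref{wc}.4 gives $T \vdash \sigma \prec \neg\pi$, so the subtheories
\[
U_1 := \PA + (\varphi \lor (\sigma \prec \neg\pi)), \qquad U_2 := \PA + (\sigma \lor \neg\pi) + (\neg\varphi \lor (\sigma \prec \neg\pi))
\]
both sit inside $T$. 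From $U_1 + \neg\varphi \vdash \sigma \prec \neg\pi$, treating the $\Sigma_n$ sentence $\sigma \prec \neg\pi$ as trivially $\Sigma_n \land \Pi_n$, hereditary $(\Sigma_n \land \Pi_n)$-conservativity of $\neg\varphi$ delivers $\PA \vdash \varphi \to (\sigma \prec \neg\pi)$. Propositions \ref{wc}.2 and \ref{wc}.3 make $\sigma \prec \neg\pi$ provably equivalent to the $\Pi_n$ sentence $\neg(\neg\pi \preccurlyeq \sigma)$ over $U_2$, so it is $\Delta_n(U_2)$; then from $U_2 + \varphi \vdash \sigma \prec \neg\pi$ and hereditary $\Delta_n$-conservativity of $\varphi$ I conclude $\PA + (\sigma \lor \neg\pi) \vdash \neg\varphi \to (\sigma \prec \neg\pi)$. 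Combining the two by classical logic, $\PA + (\sigma \lor \neg\pi) \vdash \sigma$, i.e.\ $\PA \vdash \sigma \lor \pi$.

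For $(1 \Rightarrow 2)$, assuming $T$ is $\SP$-conservative over $\PA$, I would apply Fact \ref{DA0} to extract $\varphi_1 \in \Sigma_n \cap \HDCons(\Pi_n, \Sigma_n; T)$ and seek $\varphi := \varphi_1 \land \varphi_2$ for a suitable $\varphi_2 \in \Pi_n$. The naive choice of $\varphi_2$ via a second application of Fact \ref{DA0} directly to $T$ breaks, because verifying the hereditary $\Delta_n$-conservation of $\varphi_1 \land \varphi_2$ forces one to invoke a conservativity property of $\varphi_2$ over a theory of shape $U + \varphi_1$, which generally is not a subtheory of $T$. The remedy is to upgrade first: show $T + \varphi_1$ is $\SP$-conservative over $\PA$, then apply Fact \ref{DA0} to it to obtain $\varphi_2 \in \Pi_n \cap \HDCons(\Sigma_n, \Pi_n; T + \varphi_1)$. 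The upgrade uses both halves of the double conservation of $\varphi_1$: from $T + \varphi_1 \vdash \sigma \land \pi$, the $\Pi_n$-conservativity of $\varphi_1$ over $T$ gives $T \vdash \pi$, while $T \vdash \neg\varphi_1 \lor \sigma$ with the hereditary $\Sigma_n$-conservativity of $\neg\varphi_1$ gives $T \vdash \sigma$, and then $T$'s own hypothesis provides $\PA \vdash \sigma \lor \pi$.

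With $\varphi := \varphi_1 \land \varphi_2$ in hand, the remaining work is to check the two hereditary clauses for arbitrary subtheories $U \subseteq T$. Hereditary $(\Sigma_n \land \Pi_n)$-conservativity of $\neg\varphi = \neg\varphi_1 \lor \neg\varphi_2$ splits case-wise, peeling off $\neg\varphi_1$ using its $\Sigma_n$-conservativity over $T$ and $\neg\varphi_2$ using its $\Pi_n$-conservativity over $T + \varphi_1$ (noting $U \subseteq T + \varphi_1$). Hereditary $\Delta_n$-conservativity of $\varphi$, given $\delta$ equivalent over $U$ to both $\sigma \in \Sigma_n$ and $\pi \in \Pi_n$, unfolds along the chain $U + \varphi_1 + \varphi_2 \vdash \sigma \Rightarrow U + \varphi_1 \vdash \sigma$ (via $\Sigma_n$-conservativity of $\varphi_2$ over the subtheory $U + \varphi_1$ of $T + \varphi_1$) $\Rightarrow U + \varphi_1 \vdash \pi$ (via the $U$-equivalence of $\sigma$ and $\pi$) $\Rightarrow U \vdash \pi$ (via $\Pi_n$-conservativity of $\varphi_1$ over $U \subseteq T$). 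The main obstacle, and the key idea of the argument, is precisely this subtheory mismatch: without the detour through $T + \varphi_1$, the natural combination cannot close, and recognizing that $\neg\varphi_1$'s hereditary $\Sigma_n$-conservativity suffices to propagate $\SP$-conservation from $T$ up to $T + \varphi_1$ is what makes everything fit.
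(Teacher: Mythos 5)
Your direction $(2 \Rightarrow 1)$ is correct: instead of routing through Theorem \ref{HDelta} as the paper does, you run the witness-comparison argument directly on the subtheories $U_1$ and $U_2$, and the computation closes (the only cosmetic point is that you must conjoin a trivially true $\Pi_n$ sentence to $\sigma \prec \neg\pi$ to feed it to $\Sigma_n \land \Pi_n$-conservativity, which is harmless). The problem is in $(1 \Rightarrow 2)$, and it is not a presentational gap but a false step. Your ``upgrade'' claims that for the Solovay witness $\varphi_1 \in \Sigma_n \cap \HDCons(\Pi_n, \Sigma_n; T)$, the theory $T + \varphi_1$ is $\SP$-conservative over $\PA$. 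The justification offered --- ``$T \vdash \neg\varphi_1 \lor \sigma$ with the hereditary $\Sigma_n$-conservativity of $\neg\varphi_1$ gives $T \vdash \sigma$'' --- is a non sequitur: $\Sigma_n$-conservativity of $\neg\varphi_1$ lets you discharge $\neg\varphi_1$ from a derivation of a $\Sigma_n$ sentence, but here $\sigma$ is a consequence of $\varphi_1$, not of $\neg\varphi_1$, so there is nothing to discharge. Since $\varphi_1$ is only $\Pi_n$-conservative, you can pull the $\Pi_n$ data ($\pi$ and $\neg(\neg\pi \preccurlyeq \sigma)$) down from $T + \varphi_1$ to $T$, but not the $\Sigma_n$ data ($\sigma$ or $\sigma \prec \neg\pi$), and without a genuine $\Sigma_n \land \Pi_n$ sentence provable in $T$ itself you cannot invoke $T$'s $\SP$-conservativity.

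Moreover, the conclusion of the upgrade is outright false in general, by the paper's own later results. If it held, then $\varphi_1$ (being $\Sigma_n$ with $\neg\varphi_1 \in \HCons(\Sigma_n, T)$ and $T+\varphi_1$ $\SP$-conservative over $\PA$) would witness that every $\SP$-conservative $T$ is a $\Sigma_n$-$(\SP, \Sigma_n)$ theory. But for $n = 1$ and $T = \PA + \Con(\PA)$, $T$ is $\Sigma_1$-conservative (hence $\SP$-conservative) over $\PA$, yet Proposition \ref{CE_3} and Corollary \ref{Cor_1} show $T$ is not even a $\Delta_2(\PA)$-$(\Sigma_1{\downarrow}\Pi_1, \Sigma_1)$ theory, a fortiori not a $\Sigma_1$-$(\Sigma_1{\downarrow}\Pi_1, \Sigma_1)$ theory; so $T + \varphi_1$ fails to be $\SP$-conservative over $\PA$ for every admissible $\varphi_1$, and Fact \ref{DA0} then cannot supply your $\varphi_2$. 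You correctly diagnosed the subtheory mismatch and the need for a two-step ``upgrade then nest'' construction, but applied the upgrade to the wrong component: the asymmetry only works in the $\Pi_n$ direction. The paper's Proposition \ref{DA_P} upgrades the $\Pi_n$ witness $\psi \in \HDCons(\Sigma_n, \Pi_n; T)$ --- there the $\Sigma_n$-conservativity of $\psi$ pulls the $\Sigma_n$ sentence $\sigma \prec \neg\pi$ down into $T$, where it pairs with the free $\Pi_n$ sentence $\neg(\neg\pi \preccurlyeq \sigma)$ so that $T$'s $\SP$-conservativity applies --- and only then chooses the $\Sigma_n$ sentence hereditarily over $T + \psi$. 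Reversing your order of construction along these lines repairs the proof and reproduces the paper's argument.
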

\begin{proof}
$(1 \Rightarrow 2)$: Suppose that $T$ is $\SP$-conservative over $\PA$. 
By Proposition \ref{DA_P}, we find a $\Pi_n$ sentence $\psi$ such that $T + \psi$ is $\SP$-conservative over $\PA$ and $\neg \psi \in \HCons(\Pi_n, T)$. 
Also, by Fact \ref{DA0}, we get a $\Sigma_n$ sentence $\varphi$ such that $\varphi \in \HDCons(\Pi_n, \Sigma_n; T + \psi)$. 
We prove that the $\Sigma_n \land \Pi_n$ sentence $\varphi \land \psi$ is in $\HDCons(\Delta_n, \Sigma_n \land \Pi_n; T)$. 

First, we prove $\varphi \land \psi \in \HCons(\Delta_n, T)$. 
Let $U$ be any subtheory of $T$ and $\delta$ be a $\Delta_n(U)$ sentence such that $U + (\varphi \land \psi) \vdash \delta$. 
Since $\delta$ is equivalent to some $\Pi_n$ sentence over $U$, we have $U + \psi \vdash \delta$. 
In addition, $\delta$ is equivalent to a $\Sigma_n$ sentence, and so $U \vdash \delta$. 

Second, we prove $\neg (\varphi \land \psi) \in \HCons(\Sigma_n \land \Pi_n, T)$. 
Let $U$ be any subtheory of $T$. 
Let $\sigma \in \Sigma_n$ and $\pi \in \Pi_n$ be such that $U + \neg (\varphi \land \psi) \vdash \sigma \land \pi$. 
Since $U + \neg \varphi \vdash \sigma$ and $\neg \varphi \in \HCons(\Sigma_n, T)$, we have $U \vdash \sigma$. 
Also, $U \vdash \pi$ because $U + \neg \psi \vdash \pi$ and $\neg \psi \in \HCons(\Pi_n, T)$. 
Therefore, $U \vdash \sigma \land \pi$. 

\medskip

$(2 \Rightarrow 1)$: By Theorem \ref{HDelta}. 
\end{proof}

The only remaining case is $(\Sigma_n \land \Pi_n) \cap \HDCons(\SP, \Sigma_n \land \Pi_n; T)$. 
We found the following characterization for this case.  
Here, recalling Definition \ref{Def_truple}, we say that $T$ is a $\Theta$-$(\Gamma, \Lambda)$ theory iff there exists a $\Theta$ sentence $\varphi$ such that $T + \varphi$ is $\Gamma$-conservative over $\PA$ and $\neg \varphi \in \HCons(\Lambda, T)$. 

\begin{thm}\label{H*_Sigma_DA_S}
The following are equivalent: 
\begin{enumerate}
    \item $T$ is a $\Sigma_n$-$(\SP, \Sigma_n)$ theory. 
    
    \item $(\Sigma_{n} \land \Pi_n) \cap \HDCons(\SP, \Sigma_n \land \Pi_n; T) \neq \emptyset$. 

\end{enumerate}
\end{thm}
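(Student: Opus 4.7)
The plan is to establish the two directions separately. The forward implication $(1) \Rightarrow (2)$ mimics the construction in Theorem~\ref{Doubly_and_SP}, with Proposition~\ref{DA_P} as the key ingredient, while the reverse implication $(2) \Rightarrow (1)$ extracts a $\Sigma_n$ witness from the given $\Sigma_n \land \Pi_n$ sentence and verifies the two required conservativity conditions.

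For $(1) \Rightarrow (2)$, I start with a $\Sigma_n$ witness $\varphi$ for $T$ being a $\Sigma_n$-$(\SP, \Sigma_n)$ theory. Since $T + \varphi$ is $\SP$-conservative over $\PA$, Proposition~\ref{DA_P} applied to $T + \varphi$ in place of $T$ yields a $\Pi_n$ sentence $\psi$ such that $T + \varphi + \psi$ is $\SP$-conservative over $\PA$ and $\neg \psi \in \HCons(\Pi_n, T + \varphi)$. I then claim $\chi := \varphi \land \psi \in (\Sigma_n \land \Pi_n) \cap \HDCons(\SP, \Sigma_n \land \Pi_n; T)$. For any subtheory $U$ of $T$: if $U + \chi \vdash \sigma \land \pi$, then $T + \varphi + \psi \vdash \sigma \land \pi$, so $\PA \vdash \sigma \lor \pi$ by the $\SP$-conservativity of $T + \varphi + \psi$ over $\PA$, whence $U \vdash \sigma \lor \pi$. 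If $U + \neg \chi \vdash \sigma \land \pi$, I split via $\neg \chi \equiv \neg \varphi \lor \neg \psi$: the $\neg \varphi$ case gives $U \vdash \sigma$ via $\neg \varphi \in \HCons(\Sigma_n, T)$, and the $\neg \psi$ case gives $U + \varphi \vdash \pi$ via $\neg \psi \in \HCons(\Pi_n, T + \varphi)$ applied to the subtheory $U + \varphi$ of $T + \varphi$; combining with $U + \neg \varphi \vdash \pi$ from the case split yields $U \vdash \pi$, so $U \vdash \sigma \land \pi$.

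For $(2) \Rightarrow (1)$, take $\chi \in (\Sigma_n \land \Pi_n) \cap \HDCons(\SP, \Sigma_n \land \Pi_n; T)$ and write $\chi = \varphi_0 \land \psi_0$ with $\varphi_0 \in \Sigma_n$ and $\psi_0 \in \Pi_n$. I propose the $\Sigma_n$ witness $\varphi := \varphi_0$. To verify $\neg \varphi_0 \in \HCons(\Sigma_n, T)$, given $U \subseteq T$ and $\tau \in \Sigma_n$ with $U + \neg \varphi_0 \vdash \tau$, I plan a Proposition~\ref{G_G}-style subtheory argument that arranges $V + \neg \chi \vdash \tau$ for a carefully chosen subtheory $V$ of $T$ and then applies $\neg \chi \in \HCons(\Sigma_n, T)$. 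To verify that $T + \varphi_0$ is $\SP$-conservative over $\PA$, I assume $T + \varphi_0 \vdash \alpha \land \beta$ and consider the subtheory $W := \PA + (\varphi_0 \to (\alpha \land \beta))$ of $T$; since $W + \chi \vdash \alpha \land \beta$, applying $\chi \in \HCons(\SP, W)$ produces $W \vdash \alpha \lor \beta$, from which I extract $\PA \vdash (\varphi_0 \land \psi_0) \lor (\alpha \lor \beta)$, and by distributing, both $\PA \vdash \varphi_0 \lor \alpha \lor \beta$ and $\PA \vdash \psi_0 \lor \alpha \lor \beta$.

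The hard part will be eliminating the $\varphi_0$ and $\psi_0$ disjuncts from these two derived disjunctions to obtain $\PA \vdash \alpha \lor \beta$. The intended route is to exploit the full strength of $\neg \chi \in \HCons(\Sigma_n \land \Pi_n, \PA)$, which supplies $\Sigma_n$-conservativity to treat the $\Sigma_n$ disjunct $\varphi_0$ and $\Pi_n$-conservativity to treat the $\Pi_n$ disjunct $\psi_0$ in parallel; this is where the upgrade from $\Lambda = \SP$ to $\Lambda = \Sigma_n \land \Pi_n$ in the hypothesis is essential, since neither conservativity alone suffices for this elimination. Completing this reduction, together with carrying out the analogous subtheory manipulation for the $\HCons(\Sigma_n, T)$ part, constitutes the main technical step of the proof.
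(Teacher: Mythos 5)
Your direction $(1 \Rightarrow 2)$ is correct and essentially matches the paper's construction: the paper takes $\psi \in \Pi_n \cap \HDCons(\Sigma_n, \Pi_n; T + \varphi)$ from Fact~\ref{DA0} and verifies $\varphi \land \psi \in \HCons(\SP, T)$ by a witness-comparison argument, whereas you invoke Proposition~\ref{DA_P} for $T + \varphi$ so that $T + \varphi + \psi$ is already $\SP$-conservative over $\PA$, which makes the $\HCons(\SP, T)$ clause immediate; that is a legitimate, slightly cleaner repackaging of the same idea, and your case split for $\neg(\varphi\land\psi) \in \HCons(\Sigma_n \land \Pi_n, T)$ is sound.

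The direction $(2 \Rightarrow 1)$ has genuine gaps. First, with $W := \PA + (\varphi_0 \to (\alpha \land \beta))$, the conclusion $W \vdash \alpha \lor \beta$ yields only $\PA \vdash \varphi_0 \lor \alpha \lor \beta$: negating the axiom of $W$ gives $\varphi_0 \land \neg(\alpha\land\beta)$, so $\psi_0$ never enters, and the claimed extraction of $\PA \vdash (\varphi_0 \land \psi_0) \lor (\alpha \lor \beta)$ --- in particular of $\PA \vdash \psi_0 \lor \alpha \lor \beta$ --- is unjustified. To land on $\PA + \neg\chi \vdash \alpha \lor \beta$ you must build the disjunct $\neg\chi$ into the subtheory, as the paper does with $\PA + \bigl((\varphi_0 \to \alpha) \lor \neg\chi\bigr)$. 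Second, even after that repair you still need the complementary case $\PA + \chi \vdash \alpha \lor \beta$, and this is not obtained by ``eliminating disjuncts'' with the $\Sigma_n$- and $\Pi_n$-conservativity of $\neg\chi$: conservativity removes the hypothesis $\neg\chi$ from derivations of $\Sigma_n \land \Pi_n$ sentences over subtheories; it does not delete disjuncts from a $\PA$-theorem. The paper's route is to first prove $\varphi_0 \in \Cons(\Pi_n, T)$ (from $T + \chi \vdash \varphi_0 \land \beta$ and $\chi \in \HCons(\SP, T)$), which gives $T \vdash \beta$; only then is $\PA + (\beta \lor \chi)$ a subtheory of $T$, and the $\Pi_n$-conservativity of $\neg\chi$ over it gives $\PA + \chi \vdash \beta$. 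This preliminary step is absent from your outline, and without $T \vdash \beta$ the argument cannot close. Third, your verification of $\neg\varphi_0 \in \HCons(\Sigma_n, T)$ is only a plan, and as stated it cannot be carried out: since $\neg\chi$ is strictly weaker than $\neg\varphi_0$, the natural subtheory $V = \PA + (\neg\varphi_0 \to \tau)$ satisfies $V + \neg\varphi_0 \vdash \tau$ but not $V + \neg\chi \vdash \tau$. The working argument instead weakens $\tau$ to the $\Sigma_n$ sentence $\tau \lor \neg\psi_0$, applies $\neg\chi \in \HCons(\Sigma_n, T)$ to get $U \vdash \tau \lor \neg\psi_0$, and then needs a second application of $\chi \in \HCons(\SP, T)$ (to $U + \chi \vdash \tau \land \psi_0$) to cancel the extra disjunct --- a step your sketch does not contain.
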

\begin{proof}
$(1 \Rightarrow 2)$: 
Let $\varphi$ be a $\Sigma_n$ sentence such that $T + \varphi$ is $\SP$-conservative over $\PA$ and $\neg \varphi \in \HCons(\Sigma_n, T)$. 
By Fact \ref{DA0}, there exists a $\Pi_n$ sentence $\psi$ such that $\psi \in \HDCons(\Sigma_n, \Pi_n; T + \varphi)$. 
We prove that the $\Sigma_n \land \Pi_n$ sentence $\varphi \land \psi$ is in $\HDCons(\SP, \Sigma_n \land \Pi_n; T)$. 
Since $\neg (\varphi \land \psi) \in \HCons(\Sigma_n \land \Pi_n, T)$ is proved similarly as in the proof of $(1 \Rightarrow 2)$ of Theorem \ref{DA2}, it suffices to prove $\varphi \land \psi \in \HCons(\SP T)$. 

Let $U$ be any subtheory of $T$ and let $\sigma \in \Sigma_n$ and $\pi \in \Pi_n$ be such that $U + (\varphi \land \psi) \vdash \sigma \land \pi$. 
Since $U + (\varphi \land \psi) \vdash \sigma \prec \neg \pi$ and $\psi \in \HCons(\Sigma_n, T + \varphi)$, we have $U + \varphi \vdash \sigma \prec \neg \pi$. 
Then, 
\begin{equation}\label{eq0}
    U + \varphi \vdash \neg (\neg \pi \preccurlyeq \sigma). 
\end{equation}
It follows $T + \varphi \vdash \varphi \land \neg (\neg \pi \preccurlyeq \sigma)$. 
Since $T + \varphi$ is $\SP$-conservative over $\PA$, we obtain $\PA \vdash \varphi \lor \neg (\neg \pi \preccurlyeq \sigma)$. 
By combining this with (\ref{eq0}), $U \vdash \neg (\neg \pi \preccurlyeq \sigma)$. 
Then, we conclude $U \vdash \sigma \lor \pi$. 

\medskip

$(2 \Rightarrow 1)$: 
Let $\varphi \in \Sigma_n$ and $\psi \in \Pi_n$ be such that $\varphi \land \psi \in \HDCons(\SP, \Sigma_n \land \Pi_n; T)$. 

At first, we prove $\varphi \in \Cons(\Pi_n, T)$. 
Let $\pi \in \Pi_n$ be such that $T + \varphi \vdash \pi$. 
Since $T + (\varphi \land \psi) \vdash \varphi \land \pi$ and $\varphi \land \psi \in \HCons(\SP, T)$, we have $T \vdash \varphi \lor \pi$. 
By combining this with $T + \varphi \vdash \pi$, we obtain $T \vdash \pi$. 

Secondly, we prove $\neg \varphi \in \HCons(\Sigma_n, T)$. 
Let $U$ be any subtheory of $T$ and $\sigma \in \Sigma_n$ be such that $U + \neg \varphi \vdash \sigma$. 
Since $U + \neg (\varphi \land \psi) \vdash \sigma \lor \neg \psi$ and $\neg (\varphi \land \psi) \in \HCons(\Sigma_n, T)$, we get $U \vdash \sigma \lor \neg \psi$. 
Thus, $U + (\varphi \land \psi) \vdash \sigma \land \psi$. 
Since $\varphi \land \psi \in \HCons(\SP, T)$, we have $U \vdash \sigma \lor \psi$. 
By combining this with $U \vdash \sigma \lor \neg \psi$, we conclude $U \vdash \sigma$. 

Finally, we prove that $T + \varphi$ is $\SP$-conservative over $\PA$. 
Let $\sigma \in \Sigma_n$ and $\pi \in \Pi_n$ be such that $T + \varphi \vdash \sigma \land \pi$. 
Since we have already proved that $\varphi$ is $\Pi_n$-conservative over $T$, 
\[
    T \vdash \varphi \to \sigma \quad \text{and} \quad T \vdash \pi. 
\]
We have $\PA + (\pi \lor (\varphi \land \psi)) + \neg (\varphi \land \psi) \vdash \pi$. 
Since $\PA + (\pi \lor (\varphi \land \psi))$ is a subtheory of $T$ and $\neg (\varphi \land \psi) \in \HCons(\Sigma_n \land \Pi_n, T)$, we obtain $\PA + (\pi \lor (\varphi \land \psi)) \vdash \pi$. 
Hence,  
\begin{equation}\label{eq1}
    \PA + (\varphi \land \psi) \vdash \pi.     
\end{equation}

Therefore, we get $\PA + \bigl((\varphi \to \sigma) \lor \neg (\varphi \land \psi) \bigr) + (\varphi \land \psi) \vdash \sigma \land \pi$. 
Since $\PA + \bigl((\varphi \to \sigma) \lor \neg (\varphi \land \psi) \bigr)$ is a subtheory of $T$ and $\varphi \land \psi \in \HCons(\SP, T)$, we obtain $\PA + \bigl((\varphi \to \sigma) \lor \neg (\varphi \land \psi) \bigr) \vdash \sigma \lor \pi$. 
Hence, $\PA + \neg (\varphi \land \psi) \vdash \sigma \lor \pi$. 
By combining this with (\ref{eq1}), we conclude $\PA \vdash \sigma \lor \pi$. 
\end{proof}

Table \ref{table4} summarizes the $\Theta = \Sigma_n \land \Pi_n$ case. 
In the table, `$\Sigma$-$(\downarrow, \Sigma)$' indicates that $T$ is a $\Sigma_n$-$(\SP, \Sigma_n)$ theory. 

\begin{table}[ht]
\centering
\scriptsize{
\begin{tabular}{|c||c|c|c|c|c|}
\hline
\diagbox{$\Gamma$}{$\Lambda$} & $\Delta_n$ & $\SP$ & $\Sigma_n$ & $\Pi_n$ & $\Sigma_n \land \Pi_n$ \\
\hline

\hline 
$\Pi_n$ 
& \multicolumn{1}{>{\columncolor[gray]{0.95}}c|}{$\SP$}
& \multicolumn{1}{>{\columncolor[gray]{0.95}}c|}{$\SP$}
& \multicolumn{1}{>{\columncolor[gray]{0.95}}c|}{\begin{tabular}{c} $\SP$ \\ Fact \ref{DA0} \end{tabular}}
& \multicolumn{1}{>{\columncolor[gray]{0.5}}c|}{\begin{tabular}{c} $\times$ \\ Fact \ref{Fact}.4 \\ Lindstr\"om \end{tabular}}
& \multicolumn{1}{>{\columncolor[gray]{0.5}}c|}{$\times$} \\

\hline 
$\Sigma_n$ 
& \multicolumn{1}{>{\columncolor[gray]{0.95}}c|}{$\SP$}
& \multicolumn{1}{>{\columncolor[gray]{0.95}}c|}{$\SP$} 
& \multicolumn{1}{>{\columncolor[gray]{0.5}}c|}{\begin{tabular}{c} $\times$ \\ Fact \ref{Fact}.4 \\ Lindstr\"om \end{tabular}}
& \multicolumn{1}{>{\columncolor[gray]{0.95}}c|}{\begin{tabular}{c} $\SP$ \\ Fact \ref{DA0} \end{tabular}}
& \multicolumn{1}{>{\columncolor[gray]{0.5}}c|}{$\times$} \\

\hline 
$\SP$ & 
\multicolumn{1}{>{\columncolor[gray]{0.95}}c|}{$\SP$} 
& \multicolumn{1}{>{\columncolor[gray]{0.95}}c|}{$\SP$}
& \multicolumn{1}{>{\columncolor[gray]{0.95}}c|}{$\SP$} & \multicolumn{1}{>{\columncolor[gray]{0.95}}c|}{$\SP$}
& \multicolumn{1}{>{\columncolor[gray]{0.76}}c|}{\begin{tabular}{c} $\Sigma$-$(\downarrow, \Sigma)$ \\ Thm.~\ref{H*_Sigma_DA_S} \end{tabular}} \\

\hline 
$\Delta_n$ & 
\multicolumn{1}{>{\columncolor[gray]{0.95}}c|}{\begin{tabular}{c} $\SP$ \\ Thm.~\ref{HDelta} \end{tabular}}
& \multicolumn{1}{>{\columncolor[gray]{0.95}}c|}{$\SP$}
& \multicolumn{1}{>{\columncolor[gray]{0.95}}c|}{$\SP$}
& \multicolumn{1}{>{\columncolor[gray]{0.95}}c|}{$\SP$}
& \multicolumn{1}{>{\columncolor[gray]{0.95}}c|}{\begin{tabular}{c} $\SP$ \\ Thm.~\ref{DA2} \end{tabular}}\\

\hline
\end{tabular}
}
\caption{The existence of a $\Sigma_n \land \Pi_n$ sentence that is hereditarily doubly $(\Gamma, \Lambda)$-conservative over $T$}\label{table4}
\end{table}

Here, it is natural to ask the position of the condition `$T$ is a $\Sigma_n$-$(\SP, \Sigma_n)$ theory'.
Interestingly, we will show in Subsection \ref{SSec_impl} that in the $n=1$ case, the situation of the conditions is as illustrated in Figure \ref{fign}.
Thus, we see that condition `$T$ is a $\Sigma_1$-$(\Sigma_1{\downarrow}\Pi_1, \Sigma_1)$ theory' is strictly distinct from the prominent conditions concerning $\Pi_1$, $\Sigma_1$, and $\Sigma_1{\downarrow}\Pi_1$-conservativity.

\begin{figure}[ht]
\centering
\begin{tikzpicture}
\node (A) at (2,0) {$\Sigma_1{\downarrow}\Pi_1$-cons.};
\node (B) at (0,1.5) {$\Sigma_1$-$(\Sigma_1{\downarrow}\Pi_1, \Sigma_1)$};
\node (P) at (0,3) {$\Pi_{1}$-cons.};
\node (S) at (3,3) {$\Sigma_{1}$-cons.};

\draw [->, double] (P)--(B);
\draw [->, double] (B)--(A);
\draw [->, double] (S)--(A);

\draw [->, dashed] (P)--(S);
\draw [->, dashed] (S)--(B);
\draw [->, dashed] (B) to [out=135,in=225] (P);

\node at (1.5,3) {$\times$};
\node at (1.5,3)[above] {{\small Prop.~\ref{Prop_easy}}};
\node at (1.5,2.25) {$\times$};
\node at (1.9,1.9) {{\small Cor.~\ref{Cor_1}}};
\node at (-0.5,2.3) {$\times$};
\node at (-0.5,2.3)[left] {{\small Thm.~\ref{CE_1}}};

\end{tikzpicture}
\caption{The implications between the conditions for $n=1$}\label{fign}
\end{figure}
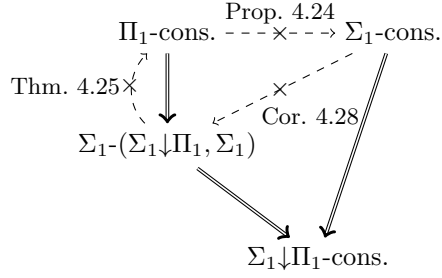

\subsection{$\mathcal{B}(\Sigma_n)$ sentences}

From Theorem \ref{DA2}, we also have that $\mathcal{B}(\Sigma_n) \cap \HDCons(\Sigma_n \land \Pi_n, \Delta_n; T) \neq \emptyset$ is equivalent to the $\SP$-conservativity of $T$ over $\PA$. 
As in the last subsection, for $\Theta = \mathcal{B}(\Sigma_n)$, the only remaining case is $\mathcal{B}(\Sigma_n) \cap \HDCons(\SP, \Sigma_n \land \Pi_n; T)$. 
We prove the following characterization for this case in a somewhat general form.

\begin{thm}\label{H*_Gamma_DA_S}
For $\Theta \in \{\mathcal{B}(\Sigma_n), \Delta_{n+1}(\PA), \Sigma_{n+1}, \Pi_{n+1}\}$, the following are equivalent: 
\begin{enumerate}
    \item $T$ is a $\Theta$-$(\SP, \Sigma_n)$ theory. 
    
    \item $\Theta \cap \HDCons(\SP, \Sigma_n \land \Pi_n; T) \neq \emptyset$. 

\end{enumerate}
\end{thm}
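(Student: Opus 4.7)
The plan is to mimic the proof of Theorem \ref{H*_Sigma_DA_S}, adjusting the construction so that the larger class $\Theta$ can absorb an auxiliary $\Pi_n$ conjunct, and to handle the reverse direction by simply verifying that $\varphi = \chi$ already works.

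For $(1 \Rightarrow 2)$, I would let $\varphi \in \Theta$ witness that $T$ is a $\Theta$-$(\SP, \Sigma_n)$ theory, so that $T + \varphi$ is $\SP$-conservative over $\PA$ and $\neg \varphi \in \HCons(\Sigma_n, T)$. Applying Fact \ref{DA0} to $T + \varphi$ yields a $\Pi_n$ sentence $\psi \in \HDCons(\Sigma_n, \Pi_n; T + \varphi)$. Set $\chi := \varphi \land \psi$. Since $\Pi_n$ is contained in each of $\mathcal{B}(\Sigma_n)$, $\Delta_{n+1}(\PA)$, $\Sigma_{n+1}$, and $\Pi_{n+1}$, and each of these four classes is closed under conjunction, we have $\chi \in \Theta$ in every case. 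The verification that $\chi \in \HDCons(\SP, \Sigma_n \land \Pi_n; T)$ then follows the proof of Theorem \ref{H*_Sigma_DA_S} essentially verbatim: for $\chi \in \HCons(\SP, T)$, push the $\Sigma_n$ witness comparison $\sigma \prec \neg \pi$ through $\psi$'s hereditary $\Sigma_n$-conservativity over $T + \varphi$ and invoke the $\SP$-conservativity of $T + \varphi$ over $\PA$; for $\neg \chi \in \HCons(\Sigma_n \land \Pi_n, T)$, split $\neg(\varphi \land \psi) \equiv \neg \varphi \lor \neg \psi$, apply $\neg \varphi \in \HCons(\Sigma_n, T)$ to extract $\sigma$ and $\neg \psi \in \HCons(\Pi_n, T + \varphi)$ to extract $\pi$, then combine via the law of excluded middle on $\varphi$.

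For $(2 \Rightarrow 1)$, I would simply take $\varphi := \chi$. The condition $\neg \chi \in \HCons(\Sigma_n, T)$ follows from $\neg \chi \in \HCons(\Sigma_n \land \Pi_n, T)$ by writing any $\Sigma_n$ sentence as $\sigma \land \top$. The main task is showing that $T + \chi$ is $\SP$-conservative over $\PA$. Assume $T + \chi \vdash \sigma \land \pi$ with $\sigma \in \Sigma_n$ and $\pi \in \Pi_n$, and set $\alpha := \sigma \prec \neg \pi$ and $\beta := \neg(\neg \pi \preccurlyeq \sigma)$; by Proposition \ref{wc}, $T + \chi \vdash \alpha \land \beta$ and $\PA \vdash \alpha \to \beta$. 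Consequently, $T$ proves both $\neg \chi \lor \alpha$ and $\neg \chi \lor \beta$, so the theory $U' := \PA + (\neg \chi \lor \alpha) + (\neg \chi \lor \beta)$ is a subtheory of $T$ satisfying $U' + \chi \vdash \alpha \land \beta$. Applying $\chi \in \HCons(\SP, T)$ yields $U' \vdash \alpha \lor \beta$, which by $\PA \vdash \alpha \to \beta$ collapses to $U' \vdash \beta$, i.e.\ $\PA + \neg \chi \vdash \beta$. Applying $\neg \chi \in \HCons(\Sigma_n \land \Pi_n, T)$ to the subtheory $\PA$ with the $\Sigma_n \land \Pi_n$ sentence $\top \land \beta$ then gives $\PA \vdash \beta$, i.e.\ $\PA \vdash \neg(\neg \pi \preccurlyeq \sigma)$, which by Proposition \ref{wc}.2 together with $\PA \vdash \alpha \to \sigma$ implies $\PA \vdash \sigma \lor \pi$.

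The main obstacle is the $(2 \Rightarrow 1)$ direction: although the target $\varphi = \chi$ is natural, lifting $\chi \in \HCons(\SP, T)$ from conservativity over arbitrary subtheories (including $T$ itself) to the stronger assertion that $T + \chi$ is $\SP$-conservative over $\PA$ requires the subtheory trick above, in which the $T$-provable implications $\neg \chi \lor \alpha$ and $\neg \chi \lor \beta$ are fed into $U'$ and $\neg \chi$'s hereditary $\Sigma_n \land \Pi_n$-conservativity is invoked in the opposite direction to eliminate the residual disjunct $\chi$. This two-step shuffle between the two polarities of $\chi$ is the subtle part of the argument.
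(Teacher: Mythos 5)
Your proposal is correct, and the two directions relate to the paper's proof differently. For $(1 \Rightarrow 2)$ you follow the paper's route exactly: the same witness $\varphi \land \psi$ with $\psi \in \Pi_n \cap \HDCons(\Sigma_n, \Pi_n; T + \varphi)$ obtained from Fact \ref{DA0}. One small caveat: the transfer from Theorem \ref{H*_Sigma_DA_S} is not literally verbatim, because there the $\Sigma_n \land \Pi_n$ sentence fed to the $\SP$-conservativity of $T + \varphi$ is $\varphi \land \neg(\neg\pi \preccurlyeq \sigma)$, which exploits $\varphi \in \Sigma_n$; for $\Theta \in \{\mathcal{B}(\Sigma_n), \Delta_{n+1}(\PA), \Sigma_{n+1}, \Pi_{n+1}\}$ one must instead feed it $(\sigma \prec \neg\pi) \land \neg(\neg\pi \preccurlyeq \sigma)$, exactly as the paper does --- this is the same pair $(\alpha, \beta)$ you deploy in the other direction, so the adjustment is cosmetic. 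For $(2 \Rightarrow 1)$ your route is genuinely different and considerably shorter. The paper does not take the given $\varphi$ as the witness: it introduces a new Rosser-type fixed point $\psi$ with $\PA \vdash \psi \leftrightarrow \PR_T^{\Sigma_n}(\gn{\neg(\varphi\lor\psi)}) \preccurlyeq \PR_T^{\Pi_n}(\gn{\varphi\lor\psi})$ and verifies at length that $\varphi \lor \psi$ has all three required properties. You show that $\varphi$ itself already works: $\neg\varphi \in \HCons(\Sigma_n, T)$ is immediate from $\neg \varphi \in \HCons(\Sigma_n \land \Pi_n, T)$ by padding with a provable $\Pi_n$ conjunct, and the $\SP$-conservativity of $T + \varphi$ over $\PA$ follows from applying $\varphi \in \Cons(\SP, U')$ to the subtheory $U' = \PA + (\varphi \to \alpha) + (\varphi \to \beta)$, collapsing $\alpha \lor \beta$ to $\beta$ via $\PA \vdash \alpha \to \beta$, and then using $\neg\varphi$'s $\Sigma_n \land \Pi_n$-conservativity over $\PA$ to pass from $\PA + \neg\varphi \vdash \beta$ to $\PA \vdash \beta$ --- which is precisely where the hypothesis $\Lambda = \Sigma_n \land \Pi_n$ rather than $\Sigma_n$ is essential, consistently with Corollary \ref{Cor_triple}. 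I checked each step and the argument is sound; it yields the marginally stronger conclusion that every element of $\Theta \cap \HDCons(\SP, \Sigma_n \land \Pi_n; T)$ witnesses that $T$ is a $\Theta$-$(\SP, \Sigma_n)$ theory, and it dispenses with the fixed-point machinery of the paper's second direction entirely.
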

\begin{proof}
$(1 \Rightarrow 2)$: 
Suppose that $T$ is a $\Theta$-$(\SP, \Sigma_n)$ theory, that is, there exists a sentence $\varphi \in \Theta$ such that $T + \varphi$ is $\SP$-conservative over $\PA$ and $\neg \varphi \in \HCons(\Sigma_n, T)$. 
By Fact \ref{DA0}, we find a sentence $\psi \in \Pi_n \cap \HDCons(\Sigma_n, \Pi_n; T + \varphi)$. 
Whatever $\Theta$ is $\mathcal{B}(\Sigma_n)$, $\Delta_{n+1}(\PA)$, $\Sigma_{n+1}$, or $\Pi_{n+1}$, we obtain $\varphi \land \psi \in \Theta$.
So, it suffices to prove $\varphi \land \psi \in \HDCons(\SP, \Sigma_n \land \Pi_n; T)$. 
Since $\neg \varphi \in \HCons(\Sigma_n,T)$ and $\neg \psi \in \HCons(\Pi_n,T)$, $\neg (\varphi \land \psi) \in \HCons(\Sigma_n \land \Pi_n, T)$ is proved as in the proof of Theorem \ref{DA2}.

We show $\varphi \land \psi \in \HCons(\SP,T)$.
Let $U$ be any subtheory of $T$ and suppose $U + \varphi + \psi \vdash \sigma \land \pi$, where $\sigma \in \Sigma_n$ and $\pi \in \Pi_n$. 
Then, $U + \varphi + \psi \vdash \sigma \prec \neg \pi$. 
Since $\psi \in \HCons(\Sigma_n, T + \varphi)$, we have $U+ \varphi \vdash \sigma \prec \neg \pi$. 
Thus, $T+ \varphi \vdash \sigma \prec \neg \pi \land \neg(\neg \pi \preccurlyeq \sigma)$. 
Since $T+\varphi$ is $\SP$-conservative over $\PA$, we obtain $\PA \vdash \sigma \prec \neg \pi \vee \neg(\neg \pi \preccurlyeq \sigma)$, and hence $\PA \vdash \neg(\neg \pi \preccurlyeq \sigma)$ holds. 
It follows that $\PA + \neg \pi \vdash \sigma$, and we conclude $\PA \vdash \sigma \lor \pi$.

\medskip

$(2 \Rightarrow 1)$: Let $\varphi$ be a $\Theta$ sentence such that $\varphi \in  \HDCons(\SP, \Sigma_n \land \Pi_n; T)$. 
Let $\psi$ be a $\Sigma_n$ sentence satisfying 
\[
\PA \vdash \psi \leftrightarrow \PR_T^{\Sigma_n}(\gn{\neg(\varphi \vee \psi)}) \preccurlyeq \PR_T^{\Pi_n}(\gn{\varphi \vee \psi}). 
\]
In either case, the sentence $\varphi \vee \psi$ is in $\Theta$.

At first, we prove $\varphi \vee \psi \in \Cons(\Pi_n, T)$.
Let $\pi \in \Pi_n$ and suppose $T+ (\varphi \vee \psi) \vdash \pi$. 
Since $T+ \neg \pi \vdash \neg(\varphi \vee \psi)$, there exists a $q \in \omega$ such that $\PA + \neg \pi \vdash \Prf_T^{\Sigma_n}(\gn{\neg(\varphi \vee \psi)}, \num{q})$.
Since $T+ \neg(\varphi \vee \psi) \vdash \forall z < \num{q} \, \neg \Prf_T^{\Pi_n}(\gn{\varphi \vee \psi}, z)$, we obtain $T+ \neg \pi + \neg(\varphi \vee \psi) \vdash \psi$, and thus $T + \neg \pi \vdash \varphi \vee \psi$ holds. 
It follows that $T \vdash \pi$.

Secondly, we prove that $T+ (\varphi \vee \psi)$ is $\SP$-conservative over $\PA$. 
Let $\sigma \in \Sigma_n$ and $\pi \in \Pi_n$ be such that $T+ (\varphi \vee \psi )\vdash \sigma \wedge \pi$. 
Here, we prove $\PA + \varphi \vdash \sigma \vee \pi$.
Since $\varphi \vee \psi \in \Cons(\Pi_n,T)$, we obtain $T \vdash \pi$ and it follows that $\PA + (\varphi \vee \pi)$ is a subtheory of $T$.
Since $\PA+ (\varphi \vee \pi) +\neg \varphi \vdash \pi$ and $\neg \varphi \in \HCons(\Sigma_n \land \Pi_n,T)$, we obtain $\PA + (\varphi \vee \pi) \vdash \pi$. In particular, $\PA + \varphi \vdash \sigma \vee \pi$.
Next, we prove $\PA + \neg \varphi \vdash \sigma \vee \pi$.
Since $T \vdash \varphi \to \sigma \wedge \pi$, the theory $\PA + (\varphi \to \sigma \wedge \pi)$ is a subtheory of $T$. 
Since $\PA + (\varphi \to \sigma \wedge \pi) + \varphi \vdash \sigma \wedge \pi$ and $\varphi \in \HCons(\SP, T)$, we obtain
$\PA + (\varphi \to \sigma \wedge \pi) \vdash \sigma \vee \pi$.
It follows that $\PA + \neg \varphi \vdash \sigma \vee \pi$.
By the law of excluded middle, we conclude $\PA \vdash \sigma \vee \pi$.

Finally, we prove $\neg (\varphi \vee \psi ) \in \HCons (\Sigma_n,T)$. 
Let $U$ be any subtheory of $T$.
Suppose that $\sigma$ is a $\Sigma_n$ sentence and $U + \neg(\varphi \vee \psi) \vdash \sigma$.
Since $U + \neg \sigma \vdash \varphi \vee \psi$, there exists a natural number $p$ such that $\PA + \neg \sigma \vdash \Prf_T^{\Pi_n}(\gn{\varphi \vee \psi}, \num{p})$. 
Since $T+ (\varphi \vee \psi) \vdash \forall z \leq \num{p} \, \neg \Prf_T^{\Sigma_n}(\gn{\neg(\varphi \vee \psi)}, z)$, it follows that $T+ \neg \sigma + (\varphi \vee \psi) \vdash \neg \psi$, and hence $T + \neg \sigma \vdash \neg \psi$. 
Since $U+ \neg \sigma \vdash \varphi \vee \psi$, we have $T+ \neg \sigma \vdash \varphi$, and thus $T+ \neg \varphi \vdash \sigma$.
Since $\neg \varphi \in \HCons(\Sigma_n \wedge \Pi_n ,T)$, we obtain $T \vdash \sigma$. 

Since $T \vdash \varphi \vee \psi \to \forall z \leq \num{p} \, \neg \Prf_T^{\Sigma_n}(\gn{\neg(\varphi \vee \psi)}, z)$, we have
\begin{equation*}
T \vdash  \psi \to \forall z \leq \num{p} \, \neg \Prf_T^{\Sigma_n}(\gn{\neg(\varphi \vee \psi)}, z).
\end{equation*}
Thus, 
\[
    V : = \PA+ \Bigl(\sigma \wedge \bigl(\psi \to \forall z \leq \num{p} \, \neg \Prf_T^{\Sigma_n}(\gn{\neg(\varphi \vee \psi)}, z) \bigr)\Bigr) \lor \neg \varphi
\]
is a subtheory of $T$.
Since $V + \varphi$ proves the $\Sigma_n \land \Pi_n$ sentence $\sigma \wedge \bigl( \psi \to \forall z \leq \num{p} \, \neg \Prf_T^{\Sigma_n}(\gn{\neg(\varphi \vee \psi)}, z) \bigr)$, we obtain 
\[
    V \vdash \sigma \vee \bigl(\psi \to \forall z \leq \num{p} \, \neg \Prf_T^{\Sigma_n}(\gn{\neg(\varphi \vee \psi)}, z) \bigr)
\]
because $\varphi \in \HCons(\SP,T)$.
In particular, 
\[
    \PA + \neg \varphi \vdash \sigma \vee \bigl(\psi \to \forall z \leq \num{p} \, \neg \Prf_T^{\Sigma_n}(\gn{\neg(\varphi \vee \psi)}, z) \bigr).
\]
Since $U + \neg \sigma \vdash \varphi \vee \psi$, it follows that
\[
U + \neg \varphi + \neg \sigma \vdash \forall z \leq \num{p} \, \neg \Prf_T^{\Sigma_n}(\gn{\neg(\varphi \vee \psi)}, z).
\]
Since $\PA + \neg \sigma \vdash \Prf_T^{\Pi_n}(\gn{\varphi \vee \psi}, \num{p})$, we obtain $U + \neg \varphi + \neg \sigma \vdash \neg \psi$.
By combining this with $U + \neg \sigma + \neg \varphi \vdash \psi$, we have $U + \neg \varphi \vdash \sigma$.
Since $\neg \varphi \in \HCons(\Sigma_n \wedge \Pi_n,T)$, we conclude $U \vdash \sigma$.
\end{proof}

As in the last subsection, the condition `$T$ is a $\mathcal{B}(\Sigma_n)$-$(\SP, \Sigma_n)$ theory' also intermediates between $\SP$-conservativity and $\Pi_n$-conservativity (see also Figure \ref{Fig2}). 
Table \ref{table5} summarizes the $\Theta = \mathcal{B}(\Sigma_n)$ case. 
In the table, `$\mathcal{B}$-$(\downarrow, \Sigma)$' indicates that $T$ is a $\mathcal{B}(\Sigma_n)$-$(\SP, \Sigma_n)$ theory.

\begin{table}[ht]
\centering
\scriptsize{
\begin{tabular}{|c||c|c|c|c|c|}
\hline
\diagbox{$\Gamma$}{$\Lambda$} & $\Delta_n$ & $\SP$ & $\Sigma_n$ & $\Pi_n$ & $\Sigma_n \land \Pi_n$ \\
\hline

\hline 
$\Sigma_n \land \Pi_n$ 
& \multicolumn{1}{>{\columncolor[gray]{0.95}}c|}{\begin{tabular}{c} $\SP$ \\ Thm.~\ref{DA2} \end{tabular}}
& \multicolumn{1}{>{\columncolor[gray]{0.76}}c|}{\begin{tabular}{c} $\mathcal{B}$-$(\downarrow, \Sigma)$ \\ Thm.~\ref{H*_Gamma_DA_S} \end{tabular}}
& \multicolumn{1}{>{\columncolor[gray]{0.5}}c|}{$\times$}
& \multicolumn{1}{>{\columncolor[gray]{0.5}}c|}{$\times$}
& \multicolumn{1}{>{\columncolor[gray]{0.5}}c|}{$\times$} \\

\hline 
$\Pi_n$ & 
\multicolumn{1}{>{\columncolor[gray]{0.95}}c|}{$\SP$}
& \multicolumn{1}{>{\columncolor[gray]{0.95}}c|}{$\SP$}
& \multicolumn{1}{>{\columncolor[gray]{0.95}}c|}{\begin{tabular}{c} $\SP$ \\ Fact \ref{DA0} \end{tabular}}
& \multicolumn{1}{>{\columncolor[gray]{0.5}}c|}{\begin{tabular}{c} $\times$ \\ Fact \ref{Fact}.4 \\ Lindstr\"om \end{tabular}}
& \multicolumn{1}{>{\columncolor[gray]{0.5}}c|}{$\times$} \\

\hline 
$\Sigma_n$ & 
\multicolumn{1}{>{\columncolor[gray]{0.95}}c|}{$\SP$}
& \multicolumn{1}{>{\columncolor[gray]{0.95}}c|}{$\SP$}
& \multicolumn{1}{>{\columncolor[gray]{0.5}}c|}{\begin{tabular}{c} $\times$ \\ Fact \ref{Fact}.4 \\ Lindstr\"om \end{tabular}}
& \multicolumn{1}{>{\columncolor[gray]{0.95}}c|}{\begin{tabular}{c} $\SP$ \\ Fact \ref{DA0} \end{tabular}}
& \multicolumn{1}{>{\columncolor[gray]{0.5}}c|}{$\times$} \\

\hline 
$\SP$ 
& \multicolumn{1}{>{\columncolor[gray]{0.95}}c|}{$\SP$}
& \multicolumn{1}{>{\columncolor[gray]{0.95}}c|}{$\SP$}
& \multicolumn{1}{>{\columncolor[gray]{0.95}}c|}{$\SP$}
& \multicolumn{1}{>{\columncolor[gray]{0.95}}c|}{$\SP$}
& \multicolumn{1}{>{\columncolor[gray]{0.83}}c|}{\begin{tabular}{c} $\mathcal{B}$-$(\downarrow, \Sigma)$ \\ Thm.~\ref{H*_Gamma_DA_S} \end{tabular}} \\

\hline 
$\Delta_n$ 
& \multicolumn{1}{>{\columncolor[gray]{0.95}}c|}{\begin{tabular}{c} $\SP$ \\ Thm.~\ref{HDelta} \end{tabular}}
& \multicolumn{1}{>{\columncolor[gray]{0.95}}c|}{$\SP$}
& \multicolumn{1}{>{\columncolor[gray]{0.95}}c|}{$\SP$}
& \multicolumn{1}{>{\columncolor[gray]{0.95}}c|}{$\SP$}
& \multicolumn{1}{>{\columncolor[gray]{0.95}}c|}{\begin{tabular}{c} $\SP$ \\ Thm.~\ref{DA2} \end{tabular}}\\

\hline
\end{tabular}
}
\caption{The existence of a $\mathcal{B}(\Sigma_n)$ sentence that is hereditarily doubly $(\Gamma, \Lambda)$-conservative over $T$}\label{table5}
\end{table}

\subsection{$\Delta_{n+1}(\PA)$ sentences}

First, we have the following non-existence proposition. 

\begin{prop}\label{HDCons_S_S}
$\Delta_{n+1}(\PA) \cap \HDCons(\Sigma_n, \Sigma_n; T) = \emptyset$. 
\end{prop}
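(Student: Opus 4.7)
The plan is to exploit the hereditary requirement at the minimal available subtheory, namely $\PA$ itself. Suppose for contradiction that $\varphi \in \Delta_{n+1}(\PA) \cap \HDCons(\Sigma_n, \Sigma_n; T)$. Since $\PA$ satisfies $T \vdash \PA \vdash \PA$, we may specialise the hereditary condition at $U = \PA$ and conclude $\varphi \in \DCons(\Sigma_n, \Sigma_n; \PA)$; that is, both $\PA + \varphi$ and $\PA + \neg \varphi$ are $\Sigma_n$-conservative over $\PA$.

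From this point there are two essentially equivalent routes. The clean route is to apply Theorem \ref{Consis_Cons} with both the theory and its subtheory taken to be $\PA$. The equivalence of clauses $(1)$ and $(3)$ there yields that $\PA$ is not $\Sigma_{n+1}$-consistent over $\PA$, i.e., $\PA$ is not $\Sigma_{n+1}$-consistent in the sense of Kreisel. But $\PA$ is sound, so by Proposition \ref{Eq_Impl}.5 it is $\Sigma_{n+1}$-consistent over $\PA$, a contradiction.

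Alternatively, one can give the direct soundness argument mimicking the derivation of Theorem \ref{Consis_Cons}'s $(2 \Rightarrow 3)$ direction. Since $\varphi \in \Delta_{n+1}(\PA)$, fix $\sigma(x) \in \Sigma_n$ and $\pi(x) \in \Pi_n$ with $\PA \vdash \varphi \leftrightarrow \forall x\, \sigma(x)$ and $\PA \vdash \varphi \leftrightarrow \exists x\, \pi(x)$. For each $k \in \omega$, we have $\PA + \varphi \vdash \sigma(\num{k})$, and $\varphi \in \Cons(\Sigma_n, \PA)$ gives $\PA \vdash \sigma(\num{k})$. Symmetrically, $\PA + \neg \varphi \vdash \neg \pi(\num{k})$ together with $\neg \varphi \in \Cons(\Sigma_n, \PA)$ yields $\PA \vdash \neg \pi(\num{k})$. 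By the soundness of $\PA$, it follows that $\mathbb{N} \models \sigma(\num{k})$ and $\mathbb{N} \models \neg \pi(\num{k})$ for every $k$, so $\mathbb{N} \models \forall x\, \sigma(x)$ and $\mathbb{N} \models \forall x\, \neg \pi(x)$ simultaneously, contradicting the two $\PA$-provable equivalences defining $\varphi$.

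No real obstacle is expected here; the only conceptual point is recognising that hereditariness automatically supplies double $(\Sigma_n, \Sigma_n)$-conservativity over the base theory $\PA$, where the structural soundness of $\PA$ forces the contradiction regardless of any consistency-theoretic pathology of $T$.
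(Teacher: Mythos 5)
Your proposal is correct and follows essentially the same route as the paper: specialise the hereditary condition at the subtheory $U = \PA$ to get a $\Delta_{n+1}(\PA)$ sentence in $\DCons(\Sigma_n, \Sigma_n; \PA)$, then derive a contradiction from the soundness of $\PA$. The paper simply cites Fact \ref{Fact}.3 at that point, whereas you re-derive it (via Theorem \ref{Consis_Cons} together with Proposition \ref{Eq_Impl}.5, or directly); both variants are sound.
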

\begin{proof}
Suppose, towards a contradiction, that $\Delta_{n+1}(\PA) \ \cap  \ \HDCons(\Sigma_n, \Sigma_n; T) \neq \emptyset$. Then, we obtain $\Delta_{n+1}(\PA) \cap \DCons(\Sigma_n, \Sigma_n; \PA) \neq \emptyset$. Since $\PA$ is sound, this contradicts Fact \ref{Fact}.3.
\end{proof}

On the other hand, $\Delta_{n+1}(\PA) \cap \HDCons(\Pi_n, \Pi_n; T)$ may not be empty. 
Before proving our theorem, we prepare the following useful lemma. 

\begin{lem}\label{Lem_fp}
Suppose that $\Gamma$ and $\Lambda$ satisfy the so-called small reflection:
\begin{itemize}
    \item $T \vdash \Prf_T^\Gamma(\gn{\varphi}, \num{p}) \to \varphi$ for all sentences $\varphi$ and $p \in \omega$, 
    \item $T \vdash \Prf_T^\Lambda(\gn{\varphi}, \num{p}) \to \varphi$ for all sentences $\varphi$ and $p \in \omega$. 
\end{itemize}
Let $\xi$ be a sentence satisfying
\[
    T \vdash \xi \leftrightarrow \PR_T^\Gamma(\gn{\neg \xi}) \preccurlyeq \PR_T^{\Lambda}(\gn{\xi}). 
\]
Then, for all $k \in \omega$, we have $T \vdash \neg \Prf_T^{\Gamma}(\gn{\neg \xi}, \num{k})$ and $T \vdash \neg \Prf_T^{\Lambda}(\gn{\xi}, \num{k})$. 
\end{lem}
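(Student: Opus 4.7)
The plan is to prove both clauses by symmetric Rosser-style arguments carried out inside $T$, leveraging the small reflection hypotheses on $\Gamma$ and $\Lambda$ together with the fixed-point equivalence for $\xi$. Throughout, $k \in \omega$ is fixed at the outset.

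For the first claim, I would assume $\Prf_T^\Gamma(\gn{\neg \xi}, \num{k})$ inside $T$ and derive a contradiction. The small reflection assumption for $\Gamma$ immediately yields $\neg \xi$, and hence by the fixed-point equivalence $\neg\bigl(\PR_T^\Gamma(\gn{\neg \xi}) \preccurlyeq \PR_T^\Lambda(\gn{\xi})\bigr)$. Because $\Prf_T^\Gamma(\gn{\neg \xi}, \num{k})$ witnesses $\PR_T^\Gamma(\gn{\neg \xi}) \vee \PR_T^\Lambda(\gn{\xi})$, Proposition \ref{wc}.2 gives $(\PR_T^\Gamma(\gn{\neg \xi}) \prec \PR_T^\Lambda(\gn{\xi})) \vee (\PR_T^\Lambda(\gn{\xi}) \preccurlyeq \PR_T^\Gamma(\gn{\neg \xi}))$, and Proposition \ref{wc}.1 rules out the first disjunct; so $\PR_T^\Lambda(\gn{\xi}) \preccurlyeq \PR_T^\Gamma(\gn{\neg \xi})$. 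Unfolding this existential produces some $y$ with $\Prf_T^\Lambda(\gn{\xi}, y)$ such that no $\Gamma$-proof exists below $y$; since $\Prf_T^\Gamma(\gn{\neg \xi}, \num{k})$, this forces $y \leq k$. Finally, since $k$ is standard, $\PA$ proves $\forall y \leq \num{k}\, \bigvee_{j \leq k} y = \num{j}$, so one can split into the finitely many cases $y = \num{0}, \dots, \num{k}$ and apply the small reflection hypothesis for $\Lambda$ to each, obtaining $\xi$ and contradicting $\neg \xi$. Hence $T \vdash \neg \Prf_T^\Gamma(\gn{\neg \xi}, \num{k})$.

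The second claim is entirely symmetric. Assume $\Prf_T^\Lambda(\gn{\xi}, \num{k})$ in $T$; small reflection for $\Lambda$ gives $\xi$, and the fixed-point equivalence gives $\PR_T^\Gamma(\gn{\neg \xi}) \preccurlyeq \PR_T^\Lambda(\gn{\xi})$. Extracting the witness yields some $x$ with $\Prf_T^\Gamma(\gn{\neg \xi}, x)$ and no $\Lambda$-proof of $\xi$ strictly below $x$; taking $y = \num{k}$ as a candidate shows $x \leq k$. Finite case analysis across the standard interval $x \in \{0, \dots, k\}$, combined with small reflection for $\Gamma$, produces $\neg \xi$, contradicting $\xi$. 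Thus $T \vdash \neg \Prf_T^\Lambda(\gn{\xi}, \num{k})$.

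The only subtle step is the collapse of the bounded existential $\exists x \leq \num{k}\,\Prf_T^{(\cdot)}(\cdot, x)$ into the finite disjunction over numerals, but this is legitimate precisely because $k$ is a standard natural number; no induction is needed, only the standard $\PA$-provable enumeration of bounded quantifiers. The remainder is a routine unwinding of the witness comparison relations via Proposition \ref{wc}, so the argument is essentially bookkeeping around the two applications of small reflection.
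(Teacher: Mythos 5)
Your proof is correct and follows essentially the same Rosser-style argument as the paper: both derive $\neg\xi$ (resp.\ $\xi$) from the assumed proof predicate via small reflection, exploit the fact that $k$ is standard to control the competing witness on the finite interval $\{0,\dots,k\}$, and apply the other small reflection hypothesis there to reach $\xi \wedge \neg\xi$. The only cosmetic difference is that the paper exhibits $\num{k}$ directly as a witness of $\PR_T^{\Gamma}(\gn{\neg\xi}) \preccurlyeq \PR_T^{\Lambda}(\gn{\xi})$ after noting $T \vdash \neg\xi \to \forall z \leq \num{k}\,\neg\Prf_T^{\Lambda}(\gn{\xi},z)$, whereas you detour through Proposition \ref{wc} to obtain the reverse comparison and then bound its witness.
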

\begin{proof}
Suppose that $\Gamma$, $\Lambda$, and $\xi$ satisfy the required conditions. 
We obtain
\begin{align*}
T \vdash \Prf_{T}^{\Gamma}(\gn{\neg \xi}, \num{k}) & \to \Prf_{T}^{\Gamma}(\gn{\neg \xi},\num{k}) \wedge \neg \xi, \\
& \to \Prf_{T}^{\Gamma}(\gn{\neg \xi},\num{k}) \wedge \forall z \leq \num{k}\, \neg \Prf_{T}^{\Lambda}(\gn{\xi},z) \land \neg \xi, \\
& \to  \xi \wedge \neg \xi.  
\end{align*}
Thus, we obtain $T \vdash \neg \Prf_{T}^{\Gamma}(\gn{ \neg \xi}, \num{k})$.
Also, $T \vdash \neg \Prf_{T}^{\Lambda}(\gn{\xi}, \num{k})$ is proved in the same way. 
\end{proof}

\begin{thm}\label{Pi}
The following are equivalent: 
\begin{enumerate}
    \item $T$ is $\Pi_n$-conservative over $\PA$. 

    \item $T$ is a $\Delta_{n+1}(\PA)$-$(\Pi_n, \mathcal{B}(\Sigma_n))$ theory. 

    \item $\Delta_{n+1}(\PA) \cap \HDCons(\Pi_n, \mathcal{B}(\Sigma_n); T) \neq \emptyset$. 

    \item $\HDCons(\Pi_n, \Pi_n; T) \neq \emptyset$. 

\end{enumerate}
\end{thm}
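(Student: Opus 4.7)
The proof establishes the cyclic chain $(1) \Rightarrow (2) \Rightarrow (3) \Rightarrow (4) \Rightarrow (1)$. The three easier implications can be disposed of quickly. The implication $(2 \Rightarrow 3)$ is a direct application of Proposition \ref{Prop_triple}. The implication $(3 \Rightarrow 4)$ follows from $\Pi_n \subseteq \mathcal{B}(\Sigma_n)$, so that $\mathcal{B}(\Sigma_n)$-conservativity implies $\Pi_n$-conservativity; hence every sentence in $\Delta_{n+1}(\PA) \cap \HDCons(\Pi_n, \mathcal{B}(\Sigma_n); T)$ already lies in $\HDCons(\Pi_n, \Pi_n; T)$. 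The implication $(4 \Rightarrow 1)$ is immediate from Proposition \ref{G_G} with $\Gamma = \Pi_n$.

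The substantive work is the implication $(1 \Rightarrow 2)$: assuming $T$ is $\Pi_n$-conservative over $\PA$, I must produce a $\Delta_{n+1}(\PA)$ sentence $\varphi$ such that $T + \varphi$ is $\Pi_n$-conservative over $\PA$ and $\neg \varphi \in \HCons(\mathcal{B}(\Sigma_n), T)$. The plan is to define $\varphi$ by a Rosser-type fixed point comparing witnesses of two relativized provability predicates,
\[
\PA \vdash \varphi \leftrightarrow \PR_T^{\Sigma_n \land \Pi_n}(\gn{\neg \varphi}) \preccurlyeq \PR_T^{\Pi_n}(\gn{\varphi}),
\]
in the spirit of Theorem \ref{Doubly_B_B} and H\'ajek's construction for Fact \ref{Fact}.2. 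The sentence is immediately $\Sigma_{n+1}$; to obtain the $\Pi_{n+1}$ form needed to place it in $\Delta_{n+1}(\PA)$, I apply Proposition \ref{wc} to rewrite $\varphi$ as $\neg(\PR_T^{\Pi_n}(\gn{\varphi}) \prec \PR_T^{\Sigma_n \land \Pi_n}(\gn{\neg \varphi}))$ under the $\PA$-provable disjunction of the two provability predicates, which is where the $\Pi_n$-conservativity hypothesis enters.

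To verify that $T + \varphi$ is $\Pi_n$-conservative over $\PA$, I will imitate the argument of Theorem \ref{Doubly_B_B}: from $T + \varphi \vdash \pi$ with $\pi \in \Pi_n$ the contrapositive yields $T + \neg \pi \vdash \neg \varphi$, and small reflection (Proposition \ref{small_rfn}.2) produces a standard $p$ with $\PA + \neg \pi \vdash \Prf_T^{\Sigma_n \land \Pi_n}(\gn{\neg \varphi}, \num{p})$; Lemma \ref{Lem_fp} at $\gn{\varphi}$ together with the fixed-point equivalence then yields $T + \neg \pi \vdash \varphi$, hence $T \vdash \pi$, whereupon $\Pi_n$-conservativity of $T$ over $\PA$ gives $\PA \vdash \pi$. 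For $\neg \varphi \in \HCons(\mathcal{B}(\Sigma_n), T)$, given a subtheory $U$ of $T$ and a $\Sigma_n \lor \Pi_n$ sentence $\sigma' \lor \pi'$ with $U + \neg \varphi \vdash \sigma' \lor \pi'$, I obtain $U + (\neg \sigma' \land \neg \pi') \vdash \varphi$; small reflection applied over $T \supseteq U$ yields $\PA + (\neg \sigma' \land \neg \pi') \vdash \Prf_T^{\Sigma_n \land \Pi_n}(\gn{\varphi}, \num{p})$ for some standard $p$, from which a witness-comparison analysis combining the fixed point with Lemma \ref{Lem_fp} at $\gn{\neg \varphi}$ produces $U \vdash \sigma' \lor \pi'$.

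The main obstacle will be confirming that $\varphi$ is indeed $\PA$-equivalent to a $\Pi_{n+1}$ sentence, i.e.\ isolating exactly how $\Pi_n$-conservativity of $T$ over $\PA$ forces the $\PA$-provable disjunction $\PR_T^{\Sigma_n \land \Pi_n}(\gn{\neg \varphi}) \lor \PR_T^{\Pi_n}(\gn{\varphi})$. The hereditary verification of $\neg \varphi$ is also delicate because Lemma \ref{Lem_fp} yields $T$-derivations rather than $\PA$-derivations, so care is needed to transport the small-reflection and witness-comparison information back into an arbitrary subtheory $U$ without appealing to axioms of $T \setminus U$.
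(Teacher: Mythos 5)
Your reduction to the single implication $(1 \Rightarrow 2)$ is correct, and the three easy implications are handled exactly as in the paper ($(2\Rightarrow 3)$ by Proposition \ref{Prop_triple}, $(3\Rightarrow 4)$ trivially, $(4\Rightarrow 1)$ by Proposition \ref{G_G}). The overall strategy for $(1\Rightarrow 2)$ --- a Rosser-style fixed point between two relativized provability predicates, small reflection, Lemma \ref{Lem_fp}, and $\Pi_n$-conservativity to push $T$-provable facts down to $\PA$ --- is also the paper's. But your specific fixed point is the wrong one, and the obstacle you flag at the end is not a technicality you can repair: it is fatal to your choice.

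Concretely: with $\varphi \leftrightarrow \PR_T^{\Sigma_n\land\Pi_n}(\gn{\neg\varphi}) \preccurlyeq \PR_T^{\Pi_n}(\gn{\varphi})$, the left-hand existential has only a $\Sigma_{n+1}$ matrix, so the identity $A \preccurlyeq B \leftrightarrow A \land \neg(B\prec A)$ does not place $\varphi$ in $\Pi_{n+1}$, and your fallback --- deriving $\PA \vdash \PR_T^{\Sigma_n\land\Pi_n}(\gn{\neg\varphi}) \lor \PR_T^{\Pi_n}(\gn{\varphi})$ from $\Pi_n$-conservativity --- cannot work: already for $T=\PA$, which satisfies (1), Lemma \ref{Lem_fp} shows both disjuncts are false in $\mathbb{N}$, so the sound theory $\PA$ does not prove the disjunction. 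Two further steps break for the same structural reason. In the hereditary verification of $\neg\varphi$, the hypothesis $\neg\sigma'\land\neg\pi'$ is $\Sigma_n\land\Pi_n$, so Proposition \ref{small_rfn}.2 yields a standard witness only for $\Prf_T^{\Sigma_n\land\Pi_n}(\gn{\varphi},\cdot)$, which is the wrong predicate for your right-hand side $\PR_T^{\Pi_n}(\gn{\varphi})$; and the witness-absence statement you must transport from $T$ down to $\PA$, namely $\forall z\leq\num{p}\,\neg\Prf_T^{\Sigma_n\land\Pi_n}(\gn{\neg\varphi},z)$, is only $\mathcal{B}(\Sigma_n)$, not $\Pi_n$, so the $\Pi_n$-conservativity of $T$ over $\PA$ does not apply to it. The paper's fixed point $\PA\vdash\varphi\leftrightarrow\PR_T^{\Sigma_n}(\gn{\neg\varphi}) \preccurlyeq \PR_T^{\Sigma_n\land\Pi_n}(\gn{\varphi})$ resolves all three problems at once: $\PR_T^{\Sigma_n}(\gn{\neg\varphi})$ is genuinely $\Sigma_n$, so $\varphi \leftrightarrow \PR_T^{\Sigma_n}(\gn{\neg\varphi}) \land \neg\bigl(\PR_T^{\Sigma_n\land\Pi_n}(\gn{\varphi}) \prec \PR_T^{\Sigma_n}(\gn{\neg\varphi})\bigr)$ is $\Pi_{n+1}$ unconditionally; a $\Sigma_n$ hypothesis $\neg\pi$ witnesses the left predicate and a $\Sigma_n\land\Pi_n$ hypothesis $\neg\sigma\land\neg\pi$ witnesses the right one; and the statement to be transported, $\forall z\leq\num{p}\,\neg\Prf_T^{\Sigma_n}(\gn{\neg\varphi},z)$, is $\Pi_n$, which is exactly where hypothesis (1) enters.
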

\begin{proof}
$(1 \Rightarrow 2)$: Suppose $T$ is $\Pi_n$-conservative over $\PA$. Let $\varphi$ be a $\Sigma_{n+1}$ sentence satisfying
\[
\PA \vdash \varphi \leftrightarrow \PR_T^{\Sigma_n}(\gn{\neg \varphi}) \preccurlyeq \PR_T^{\Sigma_n \wedge \Pi_n}(\gn{\varphi}).
\]
Since
\[
\PA \vdash \varphi \leftrightarrow \PR_{T}^{\Sigma_n}(\gn{\neg \varphi}) \wedge \neg \bigl(\PR_{T}^{\Sigma_n \wedge \Pi_n}(\gn{\varphi}) \prec \PR_{T}^{\Sigma_n}(\gn{\neg \varphi}) \bigr),
\]
the sentence $\varphi$ is $\Delta_{n+1}(\PA)$.

We prove that $T + \varphi$ is $\Pi_n$-conservative over $\PA$. 
Let $\pi$ be a $\Pi_n$ sentence such that $T +\varphi \vdash \pi$. 
Since $T + \neg \pi \vdash \neg \varphi$, there exists $p \in \omega$ such that $\PA + \neg \pi \vdash \Prf_{T}^{\Sigma_n}(\gn{\neg \varphi}, \num{p})$. 
By Lemma \ref{Lem_fp}, $T \vdash \forall z < p \, \neg \Prf_{T}^{\Sigma_n \wedge \Pi_n}(\gn{\varphi}, z)$, and hence $T + \neg \pi \vdash \varphi$. 
Thus, we obtain $T \vdash \pi$. 
Since $T$ is $\Pi_n$-conservative over $\PA$, we have $\PA \vdash \pi$. 

We prove $\neg \varphi \in \HCons(\mathcal{B}(\Sigma_n),T)$. 
Let $U$ be any subtheory of $T$ and suppose $U + \neg \varphi \vdash \sigma \vee \pi$, where $\sigma \in \Sigma_n$ and $\pi \in \Pi_n$. 
Since $T + \neg \sigma + \neg \pi \vdash \varphi$, there exists $p \in \omega$ such that $\PA + \neg \sigma + \neg \pi \vdash \Prf_{T}^{\Sigma_n \wedge \Pi_n}(\gn{\varphi}, \num{p})$. 
By Lemma \ref{Lem_fp}, we obtain $T \vdash \forall z \leq \num{p} \, \neg \Prf_{T}^{\Sigma_n}(\gn{\varphi}, z)$. 
Since $T$ is $\Pi_n$-conservative over $\PA$, we have $\PA \vdash \forall z \leq \num{p} \, \neg \Prf_{T}^{\Sigma_n}(\gn{\varphi}, z)$.
Thus, we obtain $\PA + \neg \sigma + \neg \pi \vdash \neg \varphi$. 
Therefore, it follows that $U \vdash \sigma \vee \pi$.

\medskip

$(2 \Rightarrow 3)$: By Proposition \ref{Prop_triple}. 

\medskip

$(3 \Rightarrow 4)$: Trivial.

\medskip

$(4 \Rightarrow 1)$: By Proposition \ref{G_G}.
\end{proof}

In particular, Theorem \ref{Pi} states that the converse implication of Proposition \ref{Prop_triple} holds for the triple $(\Delta_{n+1}(\PA), \Pi_n, \mathcal{B}(\Sigma_n))$. 
We further prove that this is the case for other triples.

\begin{thm}\label{H*_Theta_Gamma_B}
For any $\Theta$ and $\Lambda \supseteq \mathcal{B}(\Sigma_n)$, the following are equivalent: 
\begin{enumerate}
    \item $T$ is a $\Theta$-$(\SP, \Lambda)$ theory. 

    \item $\Theta \cap \HDCons(\SP, \Lambda; T) \neq \emptyset$. 

\end{enumerate}
\end{thm}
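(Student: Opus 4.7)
The plan is to use $\varphi$ itself as the witness in both directions of the equivalence, exploiting a monotonicity property of $\SP$-conservativity combined with a clever subtheory construction.

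For $(1 \Rightarrow 2)$, I take $\varphi \in \Theta$ witnessing $(1)$ and show $\varphi \in \HDCons(\SP, \Lambda; T)$ directly. The condition $\neg \varphi \in \HCons(\Lambda, T)$ is already in the hypothesis, so only $\varphi \in \HCons(\SP, T)$ needs proof. Let $U$ be any subtheory of $T$ and suppose $U + \varphi \vdash \sigma \land \pi$ with $\sigma \in \Sigma_n$, $\pi \in \Pi_n$. Since $U \subseteq T$, we have $T + \varphi \vdash \sigma \land \pi$, so the hypothesized $\SP$-conservativity of $T + \varphi$ over $\PA$ yields $\PA \vdash \sigma \lor \pi$, and then $U \vdash \sigma \lor \pi$ since $\PA \subseteq U$. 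The point is that the conclusion of $\SP$-conservativity is a $\PA$-theorem, which any extension inherits for free.

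For $(2 \Rightarrow 1)$, I take $\varphi \in \Theta \cap \HDCons(\SP, \Lambda; T)$. The conditions $\varphi \in \Theta$ and $\neg \varphi \in \HCons(\Lambda, T)$ are immediate, so the substantive task is to show $T + \varphi$ is $\SP$-conservative over $\PA$. Suppose $T + \varphi \vdash \sigma \land \pi$ with $\sigma \in \Sigma_n$, $\pi \in \Pi_n$. Then $T \vdash \neg \varphi \lor \sigma$ and $T \vdash \neg \varphi \lor \pi$, so the key move is to define the subtheory $U := \PA + (\neg \varphi \lor \sigma) + (\neg \varphi \lor \pi)$ of $T$. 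Since $U + \varphi \vdash \sigma \land \pi$, the assumption $\varphi \in \HCons(\SP, T)$ applied at $U$ gives $U \vdash \sigma \lor \pi$. Unpacking, $\PA \vdash (\neg \varphi \lor (\sigma \land \pi)) \to (\sigma \lor \pi)$, and in particular $\PA + \neg \varphi \vdash \sigma \lor \pi$. Since $\sigma \lor \pi \in \mathcal{B}(\Sigma_n) \subseteq \Lambda$ and $\neg \varphi \in \HCons(\Lambda, T)$, applying the latter at the subtheory $\PA$ yields $\PA \vdash \sigma \lor \pi$, as required.

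The main obstacle is the backward direction, specifically the choice of the subtheory $U := \PA + (\neg \varphi \lor \sigma) + (\neg \varphi \lor \pi)$: these two axioms are just the right weakenings of $T + \varphi \vdash \sigma \land \pi$ that remain $T$-provable while becoming equivalent to $\sigma \land \pi$ when $\varphi$ is added, thereby triggering $\HCons(\SP, T)$ to produce a $\PA$-consequence from $\neg \varphi$, which $\HCons(\Lambda, T)$ then strips away. This is essentially the same style of subtheory manipulation used in Theorem \ref{H*_Gamma_DA_S}, but the fact that $\Lambda \supseteq \mathcal{B}(\Sigma_n)$ here is precisely what allows $\sigma \lor \pi \in \Lambda$, making the final invocation of $\HCons(\Lambda, T)$ valid.
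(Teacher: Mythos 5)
Your proof is correct, and both directions rest on the same ideas as the paper's: the forward direction is exactly the content of Proposition \ref{Prop_triple} (the conclusion of $\SP$-conservativity over $\PA$ is inherited by every intermediate subtheory), and the backward direction uses the same key subtheory --- your $U = \PA + (\neg\varphi\lor\sigma)+(\neg\varphi\lor\pi)$ is deductively the paper's $\PA + (\varphi\to\sigma\land\pi)$ --- together with $\varphi\in\HCons(\SP,T)$ to obtain $\PA+\neg\varphi\vdash\sigma\lor\pi$. Where you diverge is the last step: the paper builds a second subtheory $\PA+(\sigma\lor\pi\lor\varphi)$, applies $\neg\varphi\in\HCons(\Lambda,\cdot)$ there to get $\PA+\varphi\vdash\sigma\lor\pi$, and finishes by excluded middle, whereas you simply instantiate $\neg\varphi\in\HCons(\Lambda,T)$ at the subtheory $\PA$ itself, so that $\Cons(\Lambda,\PA)$ strips $\neg\varphi$ from $\PA+\neg\varphi\vdash\sigma\lor\pi$ directly (legitimate, since $\sigma\lor\pi\in\mathcal{B}(\Sigma_n)\subseteq\Lambda$). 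This saves one subtheory construction and the case split, at no cost in generality; the paper's symmetric two-subtheory pattern is presumably retained because the same template is reused in Theorems \ref{H*_Sigma_DA_S} and \ref{H*_Theta_Gamma_B_2}, where the analogue of your shortcut is not available.
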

\begin{proof}
$(1 \Rightarrow 2)$: By Proposition \ref{Prop_triple}. 

\medskip

$(2 \Rightarrow 1)$:
Let $\varphi$ be a $\Theta$ sentence such that $\varphi \in \HDCons(\SP, \Lambda;T)$.
It suffices to show that $T+ \varphi$ is $\SP$-conservative over $\PA$.

Let $\sigma \in \Sigma_n$ and $\pi \in \Pi_n$ be such that $T+ \varphi \vdash \sigma \land \pi$.
Since $\varphi \in \HCons(\SP, T)$, we obtain $T \vdash \sigma \vee \pi$. 
Then, we see that $\PA + (\sigma \vee \pi \lor \varphi)$ is a subtheory of $T$.
We have $\PA + (\sigma \vee \pi \vee \varphi) + \neg \varphi \vdash \sigma \vee \pi$. 
Since $\neg \varphi \in \HCons(\Lambda,T)$ and $\Lambda \supseteq \mathcal{B}(\Sigma_n)$, it follows that $\PA + (\sigma \vee \pi \vee \varphi) \vdash \sigma \vee \pi$. 
In particular, $\PA + \varphi \vdash \sigma \vee \pi$. 
On the other hand, it follows from $T \vdash \varphi \to \sigma \wedge \pi$ that $\PA + (\varphi \to \sigma \wedge \pi)$ is a subtheory of $T$. 
Since $\PA + (\varphi \to \sigma \wedge \pi) + \varphi \vdash \sigma \land \pi$ and $\varphi \in \HCons(\SP, T)$, we obtain $\PA + (\varphi \to \sigma \wedge \pi) \vdash \sigma \vee \pi$. 
Thus, $\PA + \neg \varphi \vdash \sigma \vee \pi$.
By the law of excluded middle, we conclude $\PA \vdash \sigma \vee \pi$.
\end{proof}

For $\Gamma = \Delta_n$, we make a little special care. 

\begin{defn}\label{Def_truple_Delta}
We say that a theory $T$ is a \textit{$\Theta$-$(\Delta_n, \Lambda)^*$ theory} iff there exists a sentence $\varphi$ satisfying the following three conditions: 
\begin{enumerate}
    \item $\varphi \in \Theta$. 

    \item For any subtheory $U$ of $T$, we have that $T + \varphi$ is $\Delta_n(U)$-conservative over $U$. 

    \item $\neg \varphi \in \HCons(\Lambda, T)$.
\end{enumerate}
\end{defn}

In Definition \ref{Def_truple_Delta}, if the second item is replaced by 
\begin{itemize}
    \item For any subtheory $U$ of $T + \varphi$, we have that $T + \varphi$ is $\Delta_n(U)$-conservative over $U$, 
\end{itemize}
then the resulting condition is equivalent to `$T$ is a $\Theta$-$(\SP, \Lambda)$ theory' by Theorem \ref{HDelta}. 
In fact, we will show in Subsection \ref{SSec_impl} that $\Delta_{2}(\PA)$-$(\Delta_1, \mathcal{B}(\Sigma_1))^*$ is strictly weaker than $\Delta_{2}(\PA)$-$(\Sigma_1{\downarrow}\Pi_1, \mathcal{B}(\Sigma_1))$ (Corollary \ref{Cor_triple2}). 
We then prove the following characterization. 

\begin{thm}\label{H*_Theta_Gamma_B_2}
For any $\Theta$ and $\Lambda \supseteq \SP$, the following are equivalent: 
\begin{enumerate}
    \item $T$ is a $\Theta$-$(\Delta_n, \Lambda)^*$ theory. 

    \item $\Theta \cap \HDCons(\Delta_n, \Lambda; T) \neq \emptyset$. 
\end{enumerate}
\end{thm}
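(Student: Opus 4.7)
The plan is to set up the two directions carefully, noting that the forward direction is routine while the backward direction uses a subtheory-enlargement trick together with the hypothesis $\Lambda \supseteq \SP$.

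For $(1 \Rightarrow 2)$, I would argue directly from the definitions. Let $\varphi$ be a witness to $T$ being a $\Theta$-$(\Delta_n, \Lambda)^*$ theory. The membership $\varphi \in \Theta$ and $\neg \varphi \in \HCons(\Lambda, T)$ are immediate. To show $\varphi \in \Cons(\Delta_n, U)$ for any subtheory $U$ of $T$, take $\delta \in \Delta_n(U)$ with $U + \varphi \vdash \delta$; since $U$ is a subtheory of $T$, we also have $T + \varphi \vdash \delta$, and the $\Delta_n(U)$-conservativity of $T + \varphi$ over $U$ gives $U \vdash \delta$.

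For $(2 \Rightarrow 1)$, the nontrivial direction, let $\varphi \in \Theta \cap \HDCons(\Delta_n, \Lambda; T)$. It suffices to verify clause (2) of Definition \ref{Def_truple_Delta}: given a subtheory $U$ of $T$ and $\delta \in \Delta_n(U)$ with $T + \varphi \vdash \delta$, we must show $U \vdash \delta$. Fix $\sigma \in \Sigma_n$ and $\pi \in \Pi_n$ with $U \vdash \delta \leftrightarrow \sigma$ and $U \vdash \delta \leftrightarrow \pi$; in particular $U \vdash \sigma \leftrightarrow \pi$. From $T + \varphi \vdash \delta$ we obtain $T \vdash \varphi \to \sigma$ and $T \vdash \varphi \to \pi$, which means the enlarged theories $V_1 := U + (\varphi \to \sigma)$ and $V_2 := U + (\varphi \to \pi)$ are still subtheories of $T$. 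Since $V_i \supseteq U$, both $\sigma$ and $\pi$ are $\Delta_n(V_1)$ (and $\Delta_n(V_2)$) via the provable equivalence $\sigma \leftrightarrow \pi$ inherited from $U$.

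The main step is now a standard conservativity extraction: $V_1 + \varphi \vdash \sigma$ together with $\varphi \in \Cons(\Delta_n, V_1)$ gives $V_1 \vdash \sigma$, which unfolds to $U + \neg \varphi \vdash \sigma$; symmetrically $U + \neg \varphi \vdash \pi$. Hence $U + \neg \varphi \vdash \sigma \wedge \pi$. Here the hypothesis $\Lambda \supseteq \SP$ enters: by Fact \ref{Fact_DA}, $\neg \varphi \in \HCons(\Lambda, T)$ implies $\neg \varphi \in \HCons(\SP, T)$, so $\neg \varphi \in \Cons(\SP, U)$, yielding $U \vdash \sigma \vee \pi$. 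Finally, since $U$ proves $\sigma \leftrightarrow \delta$ and $\pi \leftrightarrow \delta$, we conclude $U \vdash \delta$.

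The one subtlety worth flagging is the asymmetry between $T + \varphi \vdash \delta$ and $U + \varphi \vdash \delta$: the hypothesis $\varphi \in \HDCons(\Delta_n, \Lambda; T)$ only gives conservativity with respect to $U + \varphi$, not $T + \varphi$. The maneuver of passing through the intermediate subtheories $V_1, V_2 = U + (\varphi \to \sigma), U + (\varphi \to \pi)$ is precisely what bridges this gap, and the $\SP$-conservativity of $\neg \varphi$ is exactly the tool needed to merge the $\Sigma_n$- and $\Pi_n$-halves of the argument into the single conclusion $U \vdash \delta$.
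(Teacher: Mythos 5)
Your proof is correct. The $(1 \Rightarrow 2)$ direction is identical to the paper's. For $(2 \Rightarrow 1)$ you take a mildly different route: the paper first applies $\varphi \in \HCons(\Delta_n, T)$ at the top theory $T$ itself to get $T \vdash \delta$, hence $T \vdash \sigma \land \pi$, then passes to the subtheory $\PA + \bigl((\sigma \land \pi) \lor \varphi\bigr)$ and uses the $\SP$-conservativity of $\neg \varphi$ there to obtain $\PA + \varphi \vdash \sigma \lor \pi$, whence $U + \varphi \vdash \delta$, and only then applies the hereditary $\Delta_n$-conservativity of $\varphi$ over $U$ to conclude. You instead apply the hereditary $\Delta_n$-conservativity of $\varphi$ to the enlarged subtheories $U + (\varphi \to \sigma)$ and $U + (\varphi \to \pi)$ (over which $\sigma$ and $\pi$ are indeed $\Delta_n$, via the $U$-provable equivalence $\sigma \leftrightarrow \pi$), landing at $U + \neg\varphi \vdash \sigma \land \pi$, and finish with the $\SP$-conservativity of $\neg\varphi$ over $U$ itself. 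Both arguments rest on the same two ingredients --- adjoining a $T$-provable sentence to a subtheory so that hereditary conservativity becomes applicable, and using $\Lambda \supseteq \SP$ to merge the $\Sigma_n$- and $\Pi_n$-halves --- but you deploy them in the opposite order, which lets you bypass the intermediate step $T \vdash \delta$ entirely. Neither version is simpler than the other in any essential way; yours is a perfectly good alternative.
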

\begin{proof}
$(1 \Rightarrow 2)$: 
Let $\varphi$ be a $\Theta$ sentence such that for any subtheory $U$ of $T$, we have that $T + \varphi$ is $\Delta_n(U)$-conservative over $U$, and $\neg \varphi \in \HCons(\Lambda,T)$.
We prove $\varphi \in \HDCons(\Delta_n, \Lambda;T)$. 
It suffices to prove $\varphi \in \HCons(\Delta_n ,T)$.
Let $U$ be any subtheory of $T$ and $\delta$ be a $\Delta_n(U)$ sentence such that $U + \varphi \vdash \delta$. 
Since $T + \varphi \vdash \delta$ and $T+ \varphi$ is $\Delta_n(U)$-conservative over $U$, we have that $U \vdash \delta$. 

\medskip

$(2 \Rightarrow 1)$:
Let $\varphi$ be a $\Theta$ sentence such that $\varphi \in \HDCons(\SP, \Lambda;T)$.
Let $U$ be any subtheory of $T$ and $\delta$ be a $\Delta_n(U)$ sentence. 
Suppose $T+ \varphi \vdash \delta$. 
We find $\sigma \in \Sigma_n$ and $\pi \in \Pi_n$ such that $U \vdash \delta \leftrightarrow \sigma$ and $U \vdash \delta \leftrightarrow \pi$. 
Since $\delta \in \Delta_n(U) \subseteq \Delta_n(T)$ and $\varphi \in \HCons(\Delta_n, T)$, we obtain $T \vdash \delta$, and hence $T \vdash \sigma \land \pi$. 
Then, $\PA + ((\sigma \land \pi) \lor \varphi)$ is a subtheory of $T$ and $\PA + ((\sigma \land \pi) \lor \varphi) + \neg \varphi \vdash \sigma \land \pi$. 
Since $\neg \varphi \in \HCons(\Lambda,T)$ and $\Lambda \supseteq \SP$, we obtain $\PA + ((\sigma \land \pi) \lor \varphi) \vdash \sigma \lor \pi$.
In particular, $\PA + \varphi \vdash \sigma \lor \pi$. 
Then, we have $U + \varphi \vdash \delta$.
Since $\varphi \in \HCons(\Delta_n,T)$, we conclude $U \vdash \delta$. 
\end{proof}

Table \ref{table6} summarizes the $\Theta = \Delta_{n+1}(\PA)$ case. 
In the table, `$\Delta_{n+1}$-$(\Delta, \mathcal{B})^*$', `$\Delta_{n+1}$-$(\downarrow, \mathcal{B})$', and `$\Delta_{n+1}$-$(\downarrow, \Sigma)$' indicate `$T$ is a $\Delta_{n+1}(\PA)$-$(\Delta_n, \mathcal{B}(\Sigma_n))^*$ theory', `$T$ is a $\Delta_{n+1}(\PA)$-$(\SP, \mathcal{B}(\Sigma_n))$ theory', and `$T$ is a $\Delta_{n+1}(\PA)$-$(\SP, \Sigma_n)$ theory', respectively. 

\begin{table}[ht]
\centering
\scriptsize{
\begin{tabular}{|c||c|c|c|c|c|c|}
\hline
\diagbox{$\Gamma$}{$\Lambda$} & $\Delta_n$ & $\SP$ & $\Sigma_n$ & $\Pi_n$ & $\Sigma_n \land \Pi_n$ & $\mathcal{B}(\Sigma_n)$\\
\hline

\hline 
$\mathcal{B}(\Sigma_n)$ 
& \multicolumn{1}{>{\columncolor[gray]{0.83}}c|}{\begin{tabular}{c} $\Delta_{n+1}$-$(\Delta, \mathcal{B})^*$ \\ Thm.~\ref{H*_Theta_Gamma_B_2} \end{tabular}}
& \multicolumn{1}{>{\columncolor[gray]{0.83}}c|}{\begin{tabular}{c} $\Delta_{n+1}$-$(\downarrow, \mathcal{B})$ \\ Thm.~\ref{H*_Theta_Gamma_B} \end{tabular}}
& \multicolumn{1}{>{\columncolor[gray]{0.5}}c|}{$\times$}
& \multicolumn{1}{>{\columncolor[gray]{0.75}}c|}{\begin{tabular}{c} $\Pi_n$ \\ Thm.~\ref{Pi} \end{tabular}}
& \multicolumn{1}{>{\columncolor[gray]{0.5}}c|}{$\times$}
& \multicolumn{1}{>{\columncolor[gray]{0.5}}c|}{$\times$} \\

\hline 
$\Sigma_n \land \Pi_n$ 
& \multicolumn{1}{>{\columncolor[gray]{0.95}}c|}{\begin{tabular}{c} $\SP$ \\ Thm.~\ref{DA2} \end{tabular}}
& \multicolumn{1}{>{\columncolor[gray]{0.83}}c|}{\begin{tabular}{c} $\Delta_{n+1}$-$(\downarrow, \Sigma)$ \\ Thm.~\ref{H*_Gamma_DA_S} \end{tabular}}
& \multicolumn{1}{>{\columncolor[gray]{0.5}}c|}{$\times$}
& \multicolumn{1}{>{\columncolor[gray]{0.75}}c|}{$\Pi_n$}
& \multicolumn{1}{>{\columncolor[gray]{0.5}}c|}{$\times$}
& \multicolumn{1}{>{\columncolor[gray]{0.5}}c|}{$\times$} \\

\hline 
$\Pi_n$ 
& \multicolumn{1}{>{\columncolor[gray]{0.95}}c|}{$\SP$}
& \multicolumn{1}{>{\columncolor[gray]{0.95}}c|}{$\SP$}
& \multicolumn{1}{>{\columncolor[gray]{0.95}}c|}{\begin{tabular}{c} $\SP$ \\ Fact \ref{DA0} \end{tabular}}
& \multicolumn{1}{>{\columncolor[gray]{0.75}}c|}{\begin{tabular}{c} $\Pi_n$ \\ Thm.~\ref{Pi} \end{tabular}}
& \multicolumn{1}{>{\columncolor[gray]{0.75}}c|}{$\Pi_n$}
& \multicolumn{1}{>{\columncolor[gray]{0.75}}c|}{\begin{tabular}{c} $\Pi_n$ \\ Thm.~\ref{Pi} \end{tabular}} \\

\hline 
$\Sigma_n$ 
& \multicolumn{1}{>{\columncolor[gray]{0.95}}c|}{$\SP$}
& \multicolumn{1}{>{\columncolor[gray]{0.95}}c|}{$\SP$}
& \multicolumn{1}{>{\columncolor[gray]{0.5}}c|}{\begin{tabular}{c} $\times$ \\ Prop.~\ref{HDCons_S_S} \end{tabular}}
& \multicolumn{1}{>{\columncolor[gray]{0.95}}c|}{\begin{tabular}{c} $\SP$ \\ Fact \ref{DA0} \end{tabular}}
& \multicolumn{1}{>{\columncolor[gray]{0.5}}c|}{$\times$}
& \multicolumn{1}{>{\columncolor[gray]{0.5}}c|}{$\times$} \\

\hline 
$\SP$ & 
\multicolumn{1}{>{\columncolor[gray]{0.95}}c|}{$\SP$}
& \multicolumn{1}{>{\columncolor[gray]{0.95}}c|}{$\SP$}
& \multicolumn{1}{>{\columncolor[gray]{0.95}}c|}{$\SP$}
& \multicolumn{1}{>{\columncolor[gray]{0.95}}c|}{$\SP$}
& \multicolumn{1}{>{\columncolor[gray]{0.83}}c|}{\begin{tabular}{c} $\Delta_{n+1}$-$(\downarrow, \Sigma)$ \\ Thm.~\ref{H*_Gamma_DA_S} \end{tabular}}
& \multicolumn{1}{>{\columncolor[gray]{0.83}}c|}{\begin{tabular}{c} $\Delta_{n+1}$-$(\downarrow, \mathcal{B})$ \\ Thm.~\ref{H*_Theta_Gamma_B} \end{tabular}} \\

\hline 
$\Delta_n$ 
& \multicolumn{1}{>{\columncolor[gray]{0.95}}c|}{\begin{tabular}{c} $\SP$ \\ Thm.~\ref{HDelta} \end{tabular}}
& \multicolumn{1}{>{\columncolor[gray]{0.95}}c|}{$\SP$}
& \multicolumn{1}{>{\columncolor[gray]{0.95}}c|}{$\SP$}
& \multicolumn{1}{>{\columncolor[gray]{0.95}}c|}{$\SP$}
& \multicolumn{1}{>{\columncolor[gray]{0.95}}c|}{\begin{tabular}{c} $\SP$ \\ Thm.~\ref{DA2} \end{tabular}}
& \multicolumn{1}{>{\columncolor[gray]{0.83}}c|}{\begin{tabular}{c} $\Delta_{n+1}$-$(\Delta, \mathcal{B})^*$ \\ Thm.~\ref{H*_Theta_Gamma_B_2} \end{tabular}}\\

\hline
\end{tabular}
}
\caption{The existence of a $\Delta_{n+1}(\PA)$ sentence that is hereditarily doubly $(\Gamma, \Lambda)$-conservative over $T$}\label{table6}
\end{table}

\subsection{$\Sigma_{n+1}$ sentences}

We prove that in the $\Theta = \Sigma_{n+1}$ case, $\SP$-conservativity is also a necessary and sufficient condition for the existence of certain cases. 
In this subsection, we use the $\Delta_n$-relativized proof predicates $\Prf_T^{(\Delta_n,\Sigma_n)}(x,y)$ and
$\Prf_T^{(\Delta_n,\Pi_n)}(x,y)$ introduced in Section \ref{Sec:Pre}.
Before proving our theorem, we prepare the following lemma. 

\begin{lem} \label{lem2}
Let $\varphi$ be any sentence. 
\begin{enumerate}
    \item For any $p \in \omega$, there exists $q\in \omega$ such that $T \vdash \Prf_T^{(\Delta_n, \Sigma_n)}(\gn{\varphi}, \num{p}) \to  \Prf_T^{(\Delta_n, \Pi_n)}(\gn{\varphi}, \num{q})$. 

    \item For any $p \in \omega$, there exists $q \in \omega$ such that $T \vdash \Prf_T^{(\Delta_n, \Pi_n)}(\gn{\varphi}, \num{p}) \to  \Prf_T^{(\Delta_n, \Sigma_n)}(\gn{\varphi}, \num{q})$. 

\end{enumerate}
\end{lem}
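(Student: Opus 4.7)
The proof strategy is the same for both parts, so I describe it for part 1 and indicate how part 2 dualizes.

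Fix standard $p \in \omega$. Because $p$ is standard, the bounded existential $\exists u, v, w \leq \num{p}$ in $\Prf_T^{(\Delta_n, \Sigma_n)}(\gn{\varphi}, \num{p})$ unfolds in $\PA$ into a finite disjunction. Let $S$ be the finite set of standard triples $(u_i, v_i, w_i) \leq p$ for which the $\Delta_1$ conjuncts $\Sigma_n(u_i)$, $\Pi_n(v_i)$, $\Prf_T(u_i \dot{\leftrightarrow} v_i, w_i)$, and $\Prf_T(u_i \dot{\to} \gn{\varphi}, p)$ hold in $\mathbb{N}$; the complementary triples have a $\Delta_1$ conjunct that is $\PA$-refutable. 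Hence by $\Delta_1$-completeness, $\PA$ proves that $\Prf_T^{(\Delta_n, \Sigma_n)}(\gn{\varphi}, \num{p})$ is equivalent to $\bigvee_{(u_i,v_i,w_i)\in S} \True_{\Sigma_n}(\num{u_i})$, and, via the partial truth definition applied to each specific standard $u_i$, to $u^* := \bigvee_i u_i$, a $\Sigma_n$ sentence. (If $S = \emptyset$, the hypothesis is $\PA$-refutable and any $q$ works.)

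The key step is to produce a single $\Pi_n$ witness usable for every disjunct. Using the first-order manipulation $\bigvee_i \pi_i \equiv \neg \bigwedge_i \neg \pi_i$ together with the closure of $\Sigma_n$ (up to first-order equivalence) under finite conjunction by pulling out and merging quantifier blocks, one obtains a prenex $\Pi_n$ sentence $v^*$ that is $\PA$-equivalent to $\bigvee_i v_i$. Combining this equivalence with the $T$-provable equivalences $u_i \leftrightarrow v_i$ (supplied by the $w_i$) yields $T \vdash u^* \leftrightarrow v^*$; let $w^*$ be a standard proof code. Moreover, since the given proof $p$ witnesses $T \vdash u_i \to \varphi$ for each $i \in S$, we have $T \vdash u^* \to \varphi$ and hence $T \vdash v^* \to \varphi$. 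By the standard padding lemma for proofs, we may select $q \in \omega$ such that $q$ codes a $T$-proof of $v^* \to \varphi$ while simultaneously $q \geq \gn{u^*}, \gn{v^*}, w^*$.

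It remains to verify $T \vdash \Prf_T^{(\Delta_n, \Sigma_n)}(\gn{\varphi}, \num{p}) \to \Prf_T^{(\Delta_n, \Pi_n)}(\gn{\varphi}, \num{q})$ by using $(u^*, v^*, w^*)$ as the witness triple for the conclusion. All $\Delta_1$ conjuncts are $\PA$-provable outright by our choice of $q$ and $w^*$, so only $\True_{\Pi_n}(\num{v^*})$ remains; this follows in $T$ from the hypothesis since the hypothesis $\PA$-reduces to $u^*$, then $T$-propagates to $v^*$ via $T \vdash u^* \leftrightarrow v^*$, and finally to $\True_{\Pi_n}(\num{v^*})$ via the partial truth definition for the specific $\Pi_n$ sentence $v^*$.

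The main obstacle is precisely the construction of the uniform $\Pi_n$ witness $v^*$: if one naively tried to use a $v_i$ supplied by a single hypothesis disjunct, the required proof code would depend on which disjunct fires, and no single standard $q$ can code $T$-proofs of the distinct implications $v_i \to \varphi$ for different $v_i$. The resolution is to absorb all the $v_i$'s into the single prenex $\Pi_n$ sentence $v^*$, using the closure of $\Pi_n$ under finite disjunction up to first-order equivalence. Part 2 is completely dual: the hypothesis reduces to $\bigvee_i v_i$, which is $\PA$-equivalent to the prenex $\Pi_n$ sentence $v^*$; set $u^* := \bigvee_i u_i$ (already $\Sigma_n$), pick a padded proof code $q$ of $u^* \to \varphi$ exceeding $\gn{u^*}, \gn{v^*}, w^*$, and witness $\Prf_T^{(\Delta_n, \Sigma_n)}(\gn{\varphi}, \num{q})$ with the triple $(u^*, v^*, w^*)$.
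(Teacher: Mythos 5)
Your proof is correct and follows essentially the same route as the paper's: exploit the standardness of $p$ to reduce the hypothesis, via $\Delta_1$-completeness and the partial truth definition, to a concrete $\Sigma_n$ sentence; transfer to the $\Pi_n$ side through the $T$-provable equivalence; and pad a proof of the resulting implication to obtain $q$. The only difference is that you aggregate all witness triples into disjunctions $u^*, v^*$, whereas the paper simply picks a single triple $(\xi, \nu, r)$ --- which suffices because a standard proof $p$ has a unique conclusion, so the antecedent $\xi$ is uniquely determined and the hypothesis already $\PA$-implies it, making the ``main obstacle'' you describe a non-issue.
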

\begin{proof}
We give only a proof of Clause 1. 
Let $\varphi$ be any sentence and let $p$ be any natural number. 
We distinguish the following two cases.

\paragraph{Case 1:}
There exist $\xi \in \Sigma_n$, $\nu \in \Pi_n$, and $r \leq p$ such that $p$ is a $T$-proof of $\xi \to \varphi$ and $r$ is a $T$ proof of $\xi \leftrightarrow \nu$. 

In this case, there is a $T$-proof $q \geq p$ of $\nu \to \varphi$.
We obtain
\begin{align*}
T \vdash \xi & \to \num{r} \leq \num{q} \wedge \Prf_T(\gn{\xi \leftrightarrow \nu}, \num{r}) \wedge \nu \wedge \Prf_T(\gn{\nu \to \varphi}, \num{q}), \\
& \to \num{r} \leq \num{q} \wedge \Prf_T(\gn{\xi \leftrightarrow \nu}, \num{r}) \wedge \True_{\Pi_n}(\gn{\nu}) \wedge \Prf_T(\gn{\nu \to \varphi}, \num{q}), \\
& \to \Prf_T^{(\Delta_n,\Pi_n)}(\gn{\varphi},\num{q}).
\end{align*}
Since 
\begin{align*}
\PA \vdash \True_{\Sigma_n}(u) \wedge \Prf_T(\gn{u \dot{\to} \varphi},\num{p}) & \to \True_{\Sigma_n}(u) \wedge u= \gn{\xi}, \\
& \to \xi,
\end{align*}
we have $\PA \vdash \Prf_T^{(\Delta_n,\Sigma_n)}(\gn{\varphi},\num{p}) \to \xi$.
Thus, it follows that
\[
    T \vdash \Prf_T^{(\Delta_n, \Sigma_n)}(\gn{\varphi}, \num{p}) \to  \Prf_T^{(\Delta_n, \Pi_n)}(\gn{\varphi}, \num{q}).
\]

\paragraph{Case 2:} Otherwise.

In this case, we have
\[
    \PA \vdash \forall u, v, w \leq \num{p} \, \bigl(\Sigma_n(u) \wedge \Pi_n(v) \wedge \Prf_{T}(u \dot{\leftrightarrow} v, w) \to \neg \Prf_{T}(u \dot{\to} x, y)\bigr).
\]
In particular, $\PA \vdash \neg \Prf_T^{(\Delta_n,\Sigma_n)}(\gn{\varphi}, \num{p})$, and hence the lemma trivially holds. 
\end{proof}

\begin{thm}\label{DA3}
The following are equivalent: 
\begin{enumerate}
    \item $T$ is $\SP$-conservative over $\PA$. 

   \item $\Sigma_{n+1} \cap \HDCons(\Delta_n, \mathcal{B}(\Sigma_n); T) \neq \emptyset$. 

    \item $\Sigma_{n+1} \cap \HDCons(\mathcal{B}(\Sigma_n), \Delta_n; T) \neq \emptyset$. 
\end{enumerate}
\end{thm}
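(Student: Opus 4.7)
The backward directions $(2) \Rightarrow (1)$ and $(3) \Rightarrow (1)$ follow immediately from Theorem~\ref{HDelta}. Indeed, by Fact~\ref{Fact_DA} we have $\HCons(\mathcal{B}(\Sigma_n), T) \subseteq \HCons(\Delta_n, T)$, so any sentence witnessing (2) or (3) also witnesses $\HDCons(\Delta_n, \Delta_n; T) \neq \emptyset$, and Theorem~\ref{HDelta} then yields (1).

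For $(1) \Rightarrow (2)$, I would follow the template of Theorem~\ref{Pi} but replace the $\Sigma_n$ proof predicate with its $\Delta_n$-relativized counterpart. Namely, take a sentence $\varphi$ provided by the fixed-point lemma satisfying
\[
    \PA \vdash \varphi \leftrightarrow \PR_T^{(\Delta_n, \Sigma_n)}(\gn{\neg \varphi}) \preccurlyeq \PR_T^{\Sigma_n \wedge \Pi_n}(\gn{\varphi}).
\]
Using the identity $\alpha \preccurlyeq \beta \leftrightarrow \alpha \wedge \neg(\beta \prec \alpha)$ from Proposition~\ref{wc} to produce both a $\Sigma_{n+1}$ and a $\Pi_{n+1}$ form, exactly as in Theorem~\ref{Pi}, one verifies that $\varphi$ is $\Delta_{n+1}(\PA) \subseteq \Sigma_{n+1}$. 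To check $\varphi \in \HCons(\Delta_n, T)$, take any subtheory $U$ of $T$ and any $\Delta_n(U)$ sentence $\delta$ with $U + \varphi \vdash \delta$; from $U + \neg \delta \vdash \neg \varphi$ and Proposition~\ref{small_rfn}.3 I extract $\PA + U + \neg \delta \vdash \Prf_T^{(\Delta_n, \Sigma_n)}(\gn{\neg \varphi}, \num{p})$ for some $p$, and Lemma~\ref{Lem_fp} gives $T \vdash \forall z < \num{p} \, \neg \Prf_T^{\Sigma_n \wedge \Pi_n}(\gn{\varphi}, z)$. The hypothesis of $\SP$-conservativity of $T$ over $\PA$ is then invoked---in the same spirit as the $\sigma \prec \neg \pi \leftrightarrow \neg (\neg \pi \preccurlyeq \sigma)$ manipulations used in Theorems~\ref{DA2} and~\ref{H*_Gamma_DA_S}---to push the relevant $\Sigma_n \wedge \Pi_n$ part of this small universal statement down to $\PA$, yielding $\PA + U + \neg \delta \vdash \varphi$, which contradicts $U + \neg \delta \vdash \neg \varphi$ and forces $U \vdash \delta$. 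For $\neg \varphi \in \HCons(\mathcal{B}(\Sigma_n), T)$, decompose any $\mathcal{B}(\Sigma_n)$ sentence into a finite conjunction of $\sigma \vee \pi$ sentences and mimic the $\neg \varphi$ argument of Theorem~\ref{Pi}; Proposition~\ref{small_rfn}.2 supplies a $\Prf_T^{\Sigma_n \wedge \Pi_n}$-witness for $\varphi$ from $T + \neg \sigma + \neg \pi \vdash \varphi$, while Lemma~\ref{Lem_fp} and Lemma~\ref{lem2} (to swap $(\Delta_n, \Sigma_n)$ and $(\Delta_n, \Pi_n)$ predicates where needed) allow us to conclude the argument using only $\SP$-conservativity.

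For $(1) \Rightarrow (3)$, the dual construction
\[
    \PA \vdash \varphi \leftrightarrow \PR_T^{\Sigma_n \wedge \Pi_n}(\gn{\neg \varphi}) \preccurlyeq \PR_T^{(\Delta_n, \Sigma_n)}(\gn{\varphi})
\]
again gives a $\Sigma_{n+1}$ sentence $\varphi$, and the analogous analysis with the roles of $\varphi$ and $\neg \varphi$ exchanged yields $\varphi \in \HCons(\mathcal{B}(\Sigma_n), T)$ and $\neg \varphi \in \HCons(\Delta_n, T)$.

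The principal obstacle is the transfer step that in Theorem~\ref{Pi} relies on $\Pi_n$-conservativity: there, $T \vdash \forall z \leq \num{p} \, \neg \Prf_T^{\Sigma_n}(\gn{\neg \varphi}, z)$ is descended to $\PA$ directly because the sentence is $\Pi_n$ and $T$ is assumed $\Pi_n$-conservative. With only $\SP$-conservativity available, the analogous statement must be repackaged so that what $\PA$ needs to see is of $\Sigma_n \wedge \Pi_n$ character; this is precisely where the interplay between the $(\Delta_n, \Sigma_n)$, $(\Delta_n, \Pi_n)$, and $\Sigma_n \wedge \Pi_n$ proof predicates (and hence Lemma~\ref{lem2}) has to be orchestrated carefully. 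Once this packaging is in place, the verification proceeds in close parallel to Theorems~\ref{Pi}, \ref{DA2}, and~\ref{H*_Gamma_DA_S}.
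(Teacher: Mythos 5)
Your backward directions are fine and match the paper. But the forward directions contain a genuine gap, and you have in fact put your finger on it yourself: the ``principal obstacle'' you describe at the end is not resolved by your construction, and with your choice of fixed point it cannot be. You take
\[
\PA \vdash \varphi \leftrightarrow \PR_T^{(\Delta_n, \Sigma_n)}(\gn{\neg \varphi}) \preccurlyeq \PR_T^{\Sigma_n \wedge \Pi_n}(\gn{\varphi}).
\]
The $\Delta_n$-conservativity half goes through with this (the descent there is applied to $\delta$ itself, written as $\sigma \land \pi$ with $T \vdash \sigma \land \pi$, not to the small universal statements). The problem is $\neg \varphi \in \HCons(\mathcal{B}(\Sigma_n), T)$: from $\PA + \neg\sigma + \neg\pi \vdash \Prf_T^{\Sigma_n \wedge \Pi_n}(\gn{\varphi}, \num{p})$ you need $\PA$ (not just $T$) to prove $\forall z \leq \num{p}\, \neg \Prf_T^{(\Delta_n,\Sigma_n)}(\gn{\neg\varphi}, z)$ in order to conclude $\PR_T^{\Sigma_n \wedge \Pi_n}(\gn{\varphi}) \prec \PR_T^{(\Delta_n,\Sigma_n)}(\gn{\neg\varphi})$ and hence $\neg\varphi$. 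That statement is a single $\Pi_n$ sentence, and $\SP$-conservativity never transfers a lone $\Pi_n$ (or lone $\Sigma_n$) theorem of $T$ down to $\PA$; it only converts a $T$-provable conjunction $\sigma' \land \pi'$ into a $\PA$-provable disjunction $\sigma' \lor \pi'$. Lemma~\ref{lem2} cannot repair this, because your fixed point mentions only the $(\Delta_n,\Sigma_n)$ predicate, so knowing in $\PA$ that there is no short $(\Delta_n,\Pi_n)$-proof of $\neg\varphi$ says nothing about $\varphi$'s defining equivalence.

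The paper's fix is to build both predicates into the fixed point from the start:
\[
\PA \vdash \varphi \leftrightarrow \bigl(\PR_T^{(\Delta_n, \Sigma_n)}(\gn{\neg \varphi}) \preccurlyeq \PR_T^{\Sigma_n \wedge \Pi_n}(\gn{\varphi}) \bigr) \wedge \bigl(\PR_T^{(\Delta_n, \Pi_n)}(\gn{\neg \varphi}) \preccurlyeq \PR_T^{\Sigma_n \wedge \Pi_n}(\gn{\varphi}) \bigr),
\]
and dually a disjunction of two comparisons for clause (3). Then a Lemma~\ref{Lem_fp}-style argument (using Lemma~\ref{lem2} to pass between the two $\Delta_n$-relativized predicates) gives
\[
T \vdash \forall z \leq \num{p}\, \neg \Prf_T^{(\Delta_n, \Sigma_n)}(\gn{\neg \varphi}, z) \wedge \forall z \leq \num{p}\, \neg \Prf_T^{(\Delta_n, \Pi_n)}(\gn{\neg \varphi}, z),
\]
whose first conjunct is $\Pi_n$ and whose second is $\Sigma_n$; $\SP$-conservativity yields the corresponding disjunction in $\PA$, and \emph{either} disjunct refutes one of the two conjuncts of $\varphi$, hence yields $\neg\varphi$ in $\PA + \neg\sigma + \neg\pi$. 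This pairing of a $\Pi_n$ with a $\Sigma_n$ ``no short proof'' statement, each individually sufficient to kill $\varphi$, is the key idea your proposal is missing.
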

\begin{proof}
$(1 \Rightarrow 2)$:
Suppose that $T$ is $\SP$-conservative over $\PA$.
Let $\varphi$ be a $\Sigma_{n+1}$ sentence satisfying
\[
\PA \vdash \varphi \leftrightarrow \bigl(\PR_T^{(\Delta_n, \Sigma_n)}(\gn{\neg \varphi}) \preccurlyeq \PR_T^{\Sigma_n \wedge \Pi_n}(\gn{\varphi}) \bigr) \wedge \bigl(\PR_T^{(\Delta_n, \Pi_n)}(\gn{\neg \varphi}) \preccurlyeq \PR_T^{\Sigma_n \wedge \Pi_n}(\gn{\varphi}) \bigr).
\]

First, we prove $\varphi \in \HCons(\Delta_n,T)$. 
Let $U$ be any subtheory of $T$ and $\delta$ be a $\Delta_n(U)$ sentence such that $U + \varphi \vdash \delta$.
We find $\sigma \in \Sigma_n$ and $\pi \in \Pi_n$ such that $U \vdash \delta \leftrightarrow \sigma$ and $U \vdash \delta \leftrightarrow \pi$.
Since $U + \neg \delta \vdash \neg \varphi$, there exist natural numbers $p, q \in \omega$ such that
\[
    U + \neg \delta \vdash \Prf_T^{(\Delta_n , \Sigma_n)}(\gn{\neg \varphi}, \num{p}) \wedge \Prf_T^{(\Delta_n, \Pi_n)}(\gn{\neg \varphi}, \num{q}).
\]
Since 
\[
    T + \neg \varphi \vdash \forall z \leq \num{p} \, \neg \Prf_T^{\Sigma_n \wedge \Pi_n}(\gn{\varphi},z) \wedge \forall z \leq \num{q} \, \neg \Prf_T^{\Sigma_n \wedge \Pi_n}(\gn{\varphi},z),
\]
we have $T + \neg \varphi + \neg \delta \vdash \varphi$, and so $T + \neg \delta \vdash \varphi$. 
By combining this with $U + \varphi \vdash \delta$, we obtain $T \vdash \delta$. 
Then, $T \vdash \sigma \land \pi$. 
Since $T$ is $\SP$-conservative over $\PA$, we obtain $\PA \vdash \sigma \lor \pi$. Therefore, $U \vdash \delta$ holds.

Second, we prove $\neg \varphi \in \HCons(\mathcal{B}(\Sigma_n),T)$. 
Let $U$ be any subtheory of $T$.
Let $\sigma \in \Sigma_n$ and $\pi \in \Pi_n$ be such that $U + \neg \varphi \vdash \sigma \vee \pi$. 
Since $T + \neg \pi + \neg \sigma \vdash \varphi$, there exists a natural number $p$ such that $\PA + \neg \sigma + \neg \pi \vdash \Prf_T^{\Sigma_n \wedge \Pi_n}(\gn{\varphi}, \num{p})$. 
For each natural number $k \leq p$, by Lemma \ref{lem2} we find a natural number $q $ such that $T \vdash \Prf_T^{(\Delta_n, \Sigma_n)}(\gn{\neg \varphi}, \num{k}) \to \Prf_T^{(\Delta_n, \Pi_n)}(\gn{\neg \varphi}, \num{q})$.
Since
\begin{align*}
T \vdash \Prf_T^{(\Delta_n, \Sigma_n)}(\gn{\neg \varphi}, \num{k})
& \to \Prf_T^{(\Delta_n, \Sigma_n)}(\gn{\neg \varphi}, \num{k}) \land \neg \varphi, \\
& \to \Prf_T^{(\Delta_n, \Sigma_n)}(\gn{\neg \varphi}, \num{k}) \land \forall z \leq \num{k}  \  \neg \Prf_T^{\Sigma_n \wedge \Pi_n}(\gn{\varphi},z) \\
& \quad \quad \wedge \Prf_T^{(\Delta_n, \Pi_n)}(\gn{\neg \varphi}, \num{q}) \land \forall z \leq \num{q}  \  \neg \Prf_T^{\Sigma_n \wedge \Pi_n}(\gn{\varphi},z),\\
& \to \varphi,
\end{align*}
we obtain
\begin{align*}
T \vdash \neg \Prf_T^{(\Delta_n, \Sigma_n)}(\gn{\neg \varphi}, \num{k}).
\end{align*}
The same argument yields
$T \vdash \neg \Prf_T^{(\Delta_n, \Pi_n)}(\gn{\neg \varphi}, \num{k})$, and hence
\[
T \vdash \forall z \leq \num{p} \ \neg \Prf_T^{(\Delta_n, \Sigma_n)}(\gn{\neg \varphi}, z) \wedge \forall z \leq \num{p} \ \neg  \Prf_T^{(\Delta_n, \Pi_n)}(\gn{\neg \varphi}, z).
\]
Since $T$ is $\SP$-conservative over $\PA$, we obtain \[
\PA \vdash \forall z \leq \num{p} \ \neg \Prf_T^{(\Delta_n, \Sigma_n)}(\gn{\neg \varphi}, z)  \vee  \forall z \leq \num{p} \ \neg  \Prf_T^{(\Delta_n, \Pi_n)}(\gn{\neg \varphi}, z).
\]
By combining this with $\PA + \neg \sigma + \neg \pi \vdash \Prf_T^{\Sigma_n \wedge \Pi_n}(\gn{\neg \varphi}, \num{p})$, we have
\[
\PA + \neg \sigma + \neg \pi \vdash \bigl(\PR_T^{\Sigma_n \wedge \Pi_n}(\gn{\varphi}) \prec \PR_T^{(\Delta_n, \Sigma_n)}(\gn{\neg \varphi})\bigr) \lor \bigl(\PR_T^{\Sigma_n \wedge \Pi_n}(\gn{\varphi}) \prec \PR_T^{(\Delta_n, \Pi_n)}(\gn{\neg \varphi}) \bigr).
\]
It follows that $\PA + \neg \sigma + \neg \pi \vdash \neg \varphi$.
Since $U + \neg \varphi \vdash \sigma \vee \pi$, we conclude $U \vdash \sigma \vee \pi$.

\medskip

$(1 \Rightarrow 3)$: Let $\varphi$ be a $\Sigma_{n+1}$ sentence satisfying
\[
\PA \vdash \varphi \leftrightarrow \bigl(\PR_T^{\Sigma_n \wedge \Pi_n}(\gn{\neg \varphi}) \preccurlyeq \PR_T^{(\Delta_n, \Sigma_n)}(\gn{\varphi}) \bigr) \vee \bigl(\PR_T^{\Sigma_n \wedge \Pi_n}(\gn{\neg \varphi}) \preccurlyeq \PR_T^{(\Delta_n, \Pi_n)}(\gn{\varphi}) \bigr).
\]
Then $\varphi \in \HDCons(\mathcal{B}(\Sigma_n), \Delta_n;T)$ is proved in the same way as in the proof of the implication $(1 \Rightarrow 2)$.

\medskip

$(2 \Rightarrow 1)$ and $(3 \Rightarrow 1)$: 
These implications follow from Theorem \ref{HDelta}. 
\end{proof}

\begin{thm}\label{Sigma}
Let $\Gamma \in \{\Sigma_n, \Sigma_n \land \Pi_n, \mathcal{B}(\Sigma_n)\}$. 
The following are equivalent: 
\begin{enumerate}
    \item $T$ is $\Gamma$-conservative over $\PA$. 

    \item $T$ is a $\Sigma_{n+1}$-$(\Gamma, \mathcal{B}(\Sigma_n))$ theory.

    \item $T$ is a $\Sigma_{n+1}$-$(\mathcal{B}(\Sigma_n), \Gamma)$ theory.

    \item $\Sigma_{n+1} \cap \HDCons(\Gamma, \mathcal{B}(\Sigma_n); T) \neq \emptyset$. 

    \item $\Sigma_{n+1} \cap \HDCons(\mathcal{B}(\Sigma_n), \Gamma; T) \neq \emptyset$. 

    \item $\HDCons(\Gamma, \Gamma; T) \neq \emptyset$. 

\end{enumerate}
\end{thm}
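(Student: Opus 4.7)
The plan is to close a cycle of implications $(1) \Rightarrow (2) \Rightarrow (4) \Rightarrow (6) \Rightarrow (1)$ together with the parallel route $(1) \Rightarrow (3) \Rightarrow (5) \Rightarrow (6)$. Several steps are routine: $(2) \Rightarrow (4)$ and $(3) \Rightarrow (5)$ are instances of Proposition \ref{Prop_triple}; $(4) \Rightarrow (6)$ and $(5) \Rightarrow (6)$ follow at once from the inclusion $\HCons(\mathcal{B}(\Sigma_n), T) \subseteq \HCons(\Gamma, T)$, which holds because $\Gamma \subseteq \mathcal{B}(\Sigma_n)$; and $(6) \Rightarrow (1)$ is exactly Proposition \ref{G_G}. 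The substantive content therefore lies in the two fixed-point constructions $(1) \Rightarrow (2)$ and $(1) \Rightarrow (3)$.

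For $(1) \Rightarrow (2)$, I would apply the Fixed Point Theorem to obtain a $\Sigma_{n+1}$ sentence $\varphi$ satisfying
\[
\PA \vdash \varphi \leftrightarrow \PR_T^{\Sigma_n \wedge \Pi_n}(\gn{\neg \varphi}) \preccurlyeq \PR_T^{\Sigma_n \wedge \Pi_n}(\gn{\varphi}),
\]
the same candidate used in Theorem \ref{Doubly_B_B}. To verify that $T + \varphi$ is $\Gamma$-conservative over $\PA$, suppose $T + \varphi \vdash \gamma$ for some $\gamma \in \Gamma$. After putting $\neg \gamma$ into a $\Sigma_n \wedge \Pi_n$ form (or, when $\Gamma = \mathcal{B}(\Sigma_n)$, reducing via disjunctive normal form), Proposition \ref{small_rfn}(2) supplies a $p \in \omega$ with $\PA + \neg \gamma \vdash \Prf_T^{\Sigma_n \wedge \Pi_n}(\gn{\neg \varphi}, \num{p})$; Lemma \ref{Lem_fp} provides $T \vdash \forall z \leq \num{p}\,\neg \Prf_T^{\Sigma_n \wedge \Pi_n}(\gn{\varphi}, z)$, so via the fixed point $T + \neg \gamma \vdash \varphi$, whence $T \vdash \gamma$, and hypothesis (1) lifts this to $\PA \vdash \gamma$. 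For the hereditary conservation clause $\neg \varphi \in \HCons(\mathcal{B}(\Sigma_n), T)$, a subtheory $U$ with $U + \neg \varphi \vdash \sigma \vee \pi$ leads symmetrically to a numeral bound on $\Prf_T^{\Sigma_n \wedge \Pi_n}(\gn{\varphi}, \cdot)$, after which Lemma \ref{Lem_fp} and $\Gamma$-conservativity of $T$ yield $\PA \vdash \varphi \to \sigma \vee \pi$, giving $U \vdash \sigma \vee \pi$ in combination with the hypothesis.

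For $(1) \Rightarrow (3)$, I would construct the $\Sigma_{n+1}$ sentence by a dual fixed point, for instance
\[
\PA \vdash \varphi \leftrightarrow \PR_T^{\Sigma_n \wedge \Pi_n}(\gn{\neg \varphi}) \prec \PR_T^{\Sigma_n \wedge \Pi_n}(\gn{\varphi}),
\]
or, when $\Gamma$ is strictly smaller than $\mathcal{B}(\Sigma_n)$, a more refined equation employing the relativized proof predicates $\Prf_T^{(\Delta_n, \Sigma_n)}$ and $\Prf_T^{(\Delta_n, \Pi_n)}$ in the style of Theorem \ref{DA3}, so that the asymmetry between $T + \varphi$ (required to be $\mathcal{B}(\Sigma_n)$-conservative over $\PA$) and $\neg \varphi$ (required only to be hereditarily $\Gamma$-conservative over $T$) is correctly encoded. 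The verification will proceed by a dualization of the argument for $(1) \Rightarrow (2)$.

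I expect the main obstacle to be the complexity bookkeeping: ensuring that each fixed-point sentence genuinely lies in $\Sigma_{n+1}$ rather than drifting to $\Sigma_{n+2}$, and confirming that the universal-bounded formulas $\forall z \leq \num{p}\,\neg \Prf_T^{\Sigma_n \wedge \Pi_n}(\cdot, z)$ produced by Lemma \ref{Lem_fp} can be transferred from $T$ to $\PA$ using only the specified $\Gamma$-conservativity rather than a strictly stronger conservation property.
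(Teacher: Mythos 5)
Your overall architecture matches the paper's: the cycle $(1)\Rightarrow(2)\Rightarrow(4)\Rightarrow(6)\Rightarrow(1)$ plus the parallel route through $(3)$ and $(5)$, with Propositions \ref{Prop_triple} and \ref{G_G} doing exactly the jobs you assign them. The gap is in $(1)\Rightarrow(2)$: you use the single symmetric fixed point $\varphi \leftrightarrow \PR_T^{\Sigma_n \wedge \Pi_n}(\gn{\neg \varphi}) \preccurlyeq \PR_T^{\Sigma_n \wedge \Pi_n}(\gn{\varphi})$ for all three choices of $\Gamma$, and this fails for $\Gamma = \Sigma_n$ and $\Gamma = \Sigma_n \land \Pi_n$ at exactly the step you flag as a worry at the end. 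In the verification of $\neg\varphi \in \HCons(\mathcal{B}(\Sigma_n), T)$, Lemma \ref{Lem_fp} hands you $T \vdash \forall z \leq \num{p}\, \neg \Prf_T^{\Sigma_n \wedge \Pi_n}(\gn{\neg\varphi}, z)$, and each conjunct $\neg \Prf_T^{\Sigma_n \wedge \Pi_n}(\gn{\neg\varphi}, \num{k})$ is (up to $\PA$-provable equivalence) a disjunction of a $\Pi_n$ and a $\Sigma_n$ sentence — a genuine $\mathcal{B}(\Sigma_n)$ sentence that lies in neither $\Sigma_n$ nor $\Sigma_n \land \Pi_n$. So $\Gamma$-conservativity of $T$ over $\PA$ does not transfer it to $\PA$, and the argument for $\PA + \neg\sigma + \neg\pi \vdash \neg\varphi$ breaks down. (The first half of your argument, that $T+\varphi$ is $\Gamma$-conservative over $\PA$, is fine, since there you only need $T \vdash \gamma$ with $\gamma \in \Gamma$ before invoking hypothesis (1).)

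The paper's fix is to tailor the fixed point to $\Gamma$ by changing the proof predicate on the $\neg\varphi$ side: for $\Gamma = \Sigma_n$ it uses $\PR_T^{\Pi_n}(\gn{\neg\varphi}) \preccurlyeq \PR_T^{\Sigma_n \wedge \Pi_n}(\gn{\varphi})$, so that the small-reflection consequences $\neg\Prf_T^{\Pi_n}(\gn{\neg\varphi},\num{k})$ are $\Sigma_n$ and hence transferable by $\Sigma_n$-conservativity; for $\Gamma = \Sigma_n \land \Pi_n$ it uses $\bigl(\PR_T^{\Pi_n}(\gn{\neg\varphi}) \lor \PR_T^{\Sigma_n}(\gn{\neg\varphi})\bigr) \preccurlyeq \PR_T^{\Sigma_n \wedge \Pi_n}(\gn{\varphi})$, whose small-reflection consequences are $\Sigma_n \land \Pi_n$. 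Only for $\Gamma = \mathcal{B}(\Sigma_n)$ does your symmetric fixed point work as written. For $(1)\Rightarrow(3)$ the paper dualizes by swapping which side carries the $\Gamma$-tailored predicate (e.g.\ $\PR_T^{\Sigma_n \wedge \Pi_n}(\gn{\neg\varphi}) \preccurlyeq \PR_T^{\Pi_n}(\gn{\varphi})$ for $\Gamma = \Sigma_n$); your suggestion to reach for $\Prf_T^{(\Delta_n,\Sigma_n)}$ and $\Prf_T^{(\Delta_n,\Pi_n)}$ is not needed here — those belong to the $\Delta_n$ cases of Theorem \ref{DA3} — and as stated your $(1)\Rightarrow(3)$ sketch does not identify the correct asymmetric equations.
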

\begin{proof}
$(1 \Rightarrow 2)$:
Suppose that $T$ is $\Gamma$-conservative over $\PA$. 
We give proofs depending on $\Gamma$. 

$\Gamma = \Sigma_n$: Let $\varphi$ be a $\Sigma_{n+1}$ sentence satisfying
\[
\PA \vdash \varphi \leftrightarrow \PR_T^{\Pi_n}(\gn{\neg \varphi}) \preccurlyeq \PR_T^{\Sigma_n \wedge \Pi_n}(\gn{\varphi}).
\]
Let $\sigma \in \Sigma_n$ be such that $T + \varphi \vdash \sigma$.
As in the same argument so far, $T \vdash \sigma$ is proved.
Since $T$ is $\Sigma_n$-conservative over $\PA$, we obtain $\PA \vdash \sigma$.
We have proved that $T + \varphi$ is $\Sigma_n$-conservative over $\PA$.

We prove $\neg \varphi \in \HCons(\mathcal{B}(\Sigma_n),T)$. 
Let $U$ be any subtheory of $T$. 
Let $\sigma \in \Sigma_n$ and $\pi \in \Pi_n$ be such that $U+ \neg \varphi \vdash \sigma \vee \pi$.
Then, there exists a natural number $p$ such that $\PA + \neg \sigma + \neg \pi \vdash \Prf_T^{\Sigma_n \wedge \Pi_n}(\gn{\varphi}, \num{p})$.
By Lemma \ref{Lem_fp}, we obtain 
$T \vdash \forall z \leq \num{p} \ \neg \Prf_T^{\Pi_n}(\gn{\neg \varphi})$. 
Since $T$ is $\Sigma_n$-conservative over $\PA$, we obtain $\PA \vdash \forall z \leq \num{p} \ \neg \Prf_T^{\Pi_n}(\gn{\neg \varphi})$.
Thus, $\PA + \neg \sigma + \neg \pi \vdash \neg \varphi$, and it follows that $U \vdash \sigma \vee \pi$.

\medskip

$\Gamma = \Sigma_n \wedge \Pi_n$: Let $\varphi$ be a $\Sigma_{n+1}$ sentence satisfying
\[
\PA \vdash \varphi \leftrightarrow \bigl(\PR_T^{\Pi_n}(\gn{\neg \varphi}) \vee \PR_T^{\Sigma_n}(\gn{\neg \varphi}) \bigr) \preccurlyeq \PR_T^{\Sigma_n \wedge \Pi_n}(\gn{\varphi}).
\]
We prove that $T + \varphi$ is $\Sigma_n \wedge \Pi_n$-conservative over $\PA$.
Let $\sigma \in \Sigma_n$ and $\pi \in \Pi_n$ be such that $T + \varphi \vdash \sigma \wedge \pi$. 
We have $T + (\neg \sigma \vee \neg \pi) \vdash \neg \varphi$, and hence $T + \neg \sigma  \vdash \neg \varphi$ and $T + \neg \pi \vdash \neg \varphi$. 
Then, $T + (\neg \sigma \lor \neg \pi) \vdash \varphi$ is proved in the usual way. 
So, $T \vdash \sigma \wedge \pi$. 
Since $T$ is $\Sigma_n \wedge \Pi_n$-conservative over $\PA$, we have $\PA \vdash \sigma \wedge \pi$.

We prove $\neg \varphi \in \HCons(\mathcal{B}(\Sigma_n),T)$. 
In the similar argument as we have proved Lemma \ref{Lem_fp}, it is proved that for all $p \in \omega$, $T \vdash \forall z \leq \num{p} \, (\neg \Prf_T^{\Sigma_n}\bigl(\gn{\neg \varphi}, z) \wedge \neg \Prf_T^{\Pi_n}(\gn{\neg \varphi}, z) \bigr)$.
Thus, $\neg \varphi \in \HCons(\mathcal{B}(\Sigma_n),T)$ is proved in the same way as the case $\Gamma = \Sigma_n$.

\medskip

$\Gamma = \mathcal{B}(\Sigma_n)$: 
Let $\varphi$ be a $\Sigma_{n+1}$ sentence satisfying
\[
\PA \vdash \varphi \leftrightarrow \PR_T^{\Sigma_n \wedge \Pi_n}(\gn{\neg \varphi}) \preccurlyeq \PR_T^{\Sigma_n \wedge \Pi_n}(\gn{\varphi}).
\]
It is proved that $T + \varphi$ is $\mathcal{B}(\Sigma_n)$-conservative over $\PA$ and $\neg \varphi \in \HCons(\mathcal{B}(\Sigma_n), T)$ in the same way as the case $\Gamma = \Sigma_n$.

\medskip

$(1 \Rightarrow 3)$: Suppose that $T$ is $\Gamma$-conservative over $\PA$. 
The case $\Gamma = \mathcal{B}(\Sigma_n)$ is already proved in the proof of $(1 \Rightarrow 2)$. 
We consider the following two cases.
\begin{itemize}
\item $\Gamma = \Sigma_n$: Let $\varphi$ be a $\Sigma_{n+1}$ sentence satisfying
\[
\PA \vdash \varphi \leftrightarrow \PR_T^{\Sigma_n \wedge \Pi_n}(\gn{\neg \varphi}) \preccurlyeq \PR_T^{\Pi_n}(\gn{\varphi}).
\]
\item $\Gamma = \Sigma_n \wedge \Pi_n$: Let $\varphi$ be a $\Sigma_{n+1}$ sentence satisfying
\[
\PA \vdash \varphi \leftrightarrow \PR_T^{\Sigma_n \wedge \Pi_n}(\gn{\neg \varphi}) \preccurlyeq \bigl(\PR_T^{\Pi_n}(\gn{\varphi}) \vee  \PR_T^{\Sigma_n}(\gn{\varphi}) \bigr).
\]
\end{itemize}
In the same way as our proof of $(1 \Rightarrow 2)$, it is proved that $\varphi$ is a witness of the property that $T$ is a $\Sigma_{n+1}$-$(\mathcal{B}(\Sigma_n),\Gamma)$ theory.

\medskip

$(2 \Rightarrow 4)$ and $(3 \Rightarrow 5)$: By Proposition \ref{Prop_triple}.

\medskip

$(4 \Rightarrow 6)$ and $(5 \Rightarrow 6)$: Straightforward. 

\medskip

$(6 \Rightarrow 1)$: By Proposition \ref{G_G}.
\end{proof}

\begin{table}[ht]
\centering
\scriptsize{
\begin{tabular}{|c||c|c|c|c|c|c|}
\hline
\diagbox{$\Gamma$}{$\Lambda$} & $\Delta_n$ & $\SP$ & $\Sigma_n$ & $\Pi_n$ & $\Sigma_n \land \Pi_n$ & $\mathcal{B}(\Sigma_n)$\\
\hline

\hline 
$\mathcal{B}(\Sigma_n)$ 
& \multicolumn{1}{>{\columncolor[gray]{0.95}}c|}{\begin{tabular}{c} $\SP$ \\ Thm.~\ref{DA3} \end{tabular}}
& \multicolumn{1}{>{\columncolor[gray]{0.83}}c|}{\begin{tabular}{c} $\Pi_{n+1}$-$(\downarrow, \mathcal{B})$ \\ Thm.~\ref{H*_Theta_Gamma_B} \end{tabular}}
& \multicolumn{1}{>{\columncolor[gray]{0.75}}c|}{\begin{tabular}{c} $\Sigma_n$ \\ Thm.~\ref{Sigma} \end{tabular}}
& \multicolumn{1}{>{\columncolor[gray]{0.75}}c|}{\begin{tabular}{c} $\Pi_n$ \\ Thm.~\ref{Pi} \end{tabular}}
& \multicolumn{1}{>{\columncolor[gray]{0.65}}c|}{\begin{tabular}{c} $\Sigma_n \land \Pi_n$ \\ Thm.~\ref{Sigma} \end{tabular}}
& \multicolumn{1}{>{\columncolor[gray]{0.6}}c|}{\begin{tabular}{c} $\mathcal{B}(\Sigma_n)$ \\ Thm.~\ref{Sigma} \end{tabular}} \\

\hline 
$\Sigma_n \land \Pi_n$ 
& \multicolumn{1}{>{\columncolor[gray]{0.95}}c|}{$\SP$}
& \multicolumn{1}{>{\columncolor[gray]{0.83}}c|}{\begin{tabular}{c} $\Pi_{n+1}$-$(\downarrow, \Sigma)$ \\ Thm.~\ref{H*_Gamma_DA_S} \end{tabular}}
& \multicolumn{1}{>{\columncolor[gray]{0.75}}c|}{$\Sigma_n$}
& \multicolumn{1}{>{\columncolor[gray]{0.75}}c|}{$\Pi_n$}
& \multicolumn{1}{>{\columncolor[gray]{0.65}}c|}{\begin{tabular}{c} $\Sigma_n \land \Pi_n$ \\ Thm.~\ref{Sigma} \end{tabular}}
& \multicolumn{1}{>{\columncolor[gray]{0.65}}c|}{\begin{tabular}{c} $\Sigma_n \land \Pi_n$ \\ Thm.~\ref{Sigma} \end{tabular}} \\

\hline 
$\Pi_n$ 
& \multicolumn{1}{>{\columncolor[gray]{0.95}}c|}{$\SP$}
& \multicolumn{1}{>{\columncolor[gray]{0.95}}c|}{$\SP$}
& \multicolumn{1}{>{\columncolor[gray]{0.95}}c|}{\begin{tabular}{c} $\SP$ \\ Fact \ref{DA0} \end{tabular}}
& \multicolumn{1}{>{\columncolor[gray]{0.75}}c|}{\begin{tabular}{c} $\Pi_n$ \\ Thm.~\ref{Pi} \end{tabular}}
& \multicolumn{1}{>{\columncolor[gray]{0.75}}c|}{$\Pi_n$}
& \multicolumn{1}{>{\columncolor[gray]{0.75}}c|}{\begin{tabular}{c} $\Pi_n$ \\ Thm.~\ref{Pi} \end{tabular}} \\

\hline 
$\Sigma_n$ 
& \multicolumn{1}{>{\columncolor[gray]{0.95}}c|}{$\SP$}
& \multicolumn{1}{>{\columncolor[gray]{0.95}}c|}{$\SP$}
& \multicolumn{1}{>{\columncolor[gray]{0.75}}c|}{\begin{tabular}{c} $\Sigma_n$ \\ Thm.~\ref{Sigma} \end{tabular}}
& \multicolumn{1}{>{\columncolor[gray]{0.95}}c|}{\begin{tabular}{c} $\SP$ \\ Fact \ref{DA0} \end{tabular}}
& \multicolumn{1}{>{\columncolor[gray]{0.75}}c|}{$\Sigma_n$}
& \multicolumn{1}{>{\columncolor[gray]{0.75}}c|}{\begin{tabular}{c} $\Sigma_n$ \\ Thm.~\ref{Sigma} \end{tabular}} \\

\hline 
$\SP$ & 
\multicolumn{1}{>{\columncolor[gray]{0.95}}c|}{$\SP$}
& \multicolumn{1}{>{\columncolor[gray]{0.95}}c|}{$\SP$}
& \multicolumn{1}{>{\columncolor[gray]{0.95}}c|}{$\SP$}
& \multicolumn{1}{>{\columncolor[gray]{0.95}}c|}{$\SP$}
& \multicolumn{1}{>{\columncolor[gray]{0.83}}c|}{\begin{tabular}{c} $\Sigma_{n+1}$-$(\downarrow, \Sigma)$ \\ Thm.~\ref{H*_Gamma_DA_S} \end{tabular}}
& \multicolumn{1}{>{\columncolor[gray]{0.83}}c|}{\begin{tabular}{c} $\Sigma_{n+1}$-$(\downarrow, \mathcal{B})$ \\ Thm.~\ref{H*_Theta_Gamma_B} \end{tabular}} \\

\hline 
$\Delta_n$ 
& \multicolumn{1}{>{\columncolor[gray]{0.95}}c|}{\begin{tabular}{c} $\SP$ \\ Thm.~\ref{HDelta} \end{tabular}}
& \multicolumn{1}{>{\columncolor[gray]{0.95}}c|}{$\SP$}
& \multicolumn{1}{>{\columncolor[gray]{0.95}}c|}{$\SP$}
& \multicolumn{1}{>{\columncolor[gray]{0.95}}c|}{$\SP$}
& \multicolumn{1}{>{\columncolor[gray]{0.95}}c|}{$\SP$}
& \multicolumn{1}{>{\columncolor[gray]{0.95}}c|}{\begin{tabular}{c} $\SP$ \\ Thm.~\ref{DA3} \end{tabular}}\\

\hline
\end{tabular}
}
\caption{The existence of a $\Sigma_{n+1}$ sentence that is hereditarily doubly $(\Gamma, \Lambda)$-conservative over $T$}\label{table7}
\end{table}

\subsection{Implications and non-implications between the conditions}\label{SSec_impl}

In this section, we examine implications and non-implications between the conditions we have addressed. 
Figure \ref{Fig2} sums up implications between the conditions. 
Almost all of the implications in the figure are trivial, but the following non-trivial implication holds when $n=1$.

\begin{prop}\label{S1-sound}
If $T$ is $\Sigma_1$-conservative over $\PA$, then $T$ is a $\Delta_2(\PA)$-$(\Delta_1, \mathcal{B}(\Sigma_1))^*$ theory. 
\end{prop}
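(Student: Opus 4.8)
The plan is to produce a single $\Delta_2(\PA)$ sentence $\varphi$ witnessing Definition \ref{Def_truple_Delta} for the triple $(\Delta_2(\PA),\Delta_1,\mathcal{B}(\Sigma_1))$: a $\varphi\in\Delta_2(\PA)$ that is $\Delta_1$-conservative over $T$ (so that $T+\varphi$ is $\Delta_1(U)$-conservative over every subtheory $U$) and whose negation lies in $\HCons(\mathcal{B}(\Sigma_1),T)$. First I would record the special feature of the level $n=1$: by Proposition \ref{Eq_Impl}.1, together with $\Sigma_1$-completeness of $\PA$, $\Sigma_1$-conservativity of $T$ over $\PA$ is the same as $\Sigma_1$-soundness of $T$, so every $\Sigma_1$ (hence every $\Delta_1$) sentence provable in $T$ is already provable in $\PA$. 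This is the only way the hypothesis will enter, namely through pushing $\Sigma_1$-facts down from $T$ to $\PA$.

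For the construction I would take, as in the proof of Theorem \ref{Pi}, a sentence $\varphi$ obtained from the Fixed Point Theorem with
\[
\PA\vdash\varphi\leftrightarrow\PR_T^{(\Delta_1,\Sigma_1)}(\gn{\neg\varphi})\preccurlyeq\PR_T^{\Sigma_1\wedge\Pi_1}(\gn\varphi).
\]
Since the left provability predicate is $\PA$-provably $\Sigma_1$, the same rewriting as in Theorem \ref{Pi}, using Proposition \ref{wc}, exhibits $\varphi$ as both $\Sigma_2$ and $\Pi_2$ over $\PA$, i.e.\ $\varphi\in\Delta_2(\PA)$; this settles clause 1. For clause 2, let $U$ be a subtheory of $T$ and $\delta\in\Delta_1(U)$ with $T+\varphi\vdash\delta$. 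Then $T+\neg\delta\vdash\neg\varphi$ with $\neg\delta\in\Delta_1(U)\subseteq\Delta_1(T)$, so Proposition \ref{small_rfn}.3 furnishes a standard $p$ with $T+\neg\delta\vdash\Prf_T^{(\Delta_1,\Sigma_1)}(\gn{\neg\varphi},\num p)$, while Lemma \ref{Lem_fp} gives $T\vdash\neg\Prf_T^{(\Delta_1,\Sigma_1)}(\gn{\neg\varphi},\num p)$. Hence $T+\neg\delta$ is inconsistent, so $T\vdash\delta$, whence $T\vdash\sigma$ for the $\Sigma_1$ representative $\sigma$ of $\delta$ over $U$; $\Sigma_1$-soundness upgrades this to $\PA\vdash\sigma$, and therefore $U\vdash\delta$.

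The hard part will be clause 3, that $\neg\varphi\in\HCons(\mathcal{B}(\Sigma_1),T)$. Reducing $\mathcal{B}(\Sigma_1)$-conservativity to $\Sigma_1\vee\Pi_1$ sentences, suppose $U+\neg\varphi\vdash\sigma\vee\pi$ for a subtheory $U$; then $T+(\neg\sigma\wedge\neg\pi)\vdash\varphi$, and by Proposition \ref{small_rfn}.2 (with $\Gamma=\Sigma_1\wedge\Pi_1$) we get $\PA+\neg\sigma+\neg\pi\vdash\Prf_T^{\Sigma_1\wedge\Pi_1}(\gn\varphi,\num p)$ for some $p$. By the fixed point it would suffice to show $\PA+\neg\sigma+\neg\pi\vdash\forall z\le\num p\,\neg\Prf_T^{(\Delta_1,\Sigma_1)}(\gn{\neg\varphi},z)$, since this together with the $\Sigma_1\wedge\Pi_1$-proof of $\varphi$ forces $\neg\varphi$. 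This is exactly the step where the proof of Theorem \ref{Pi} used $\Pi_1$-conservativity, which is \emph{not} available here: the displayed sentence is $\Pi_1$, and $\Sigma_1$-soundness cannot transfer $\Pi_1$-facts from $T$ to $\PA$.

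My plan to circumvent this obstacle is to route through the companion predicate $\Prf_T^{(\Delta_1,\Pi_1)}$, exploiting the interchangeability of the two $\Delta_1$-relativized proof predicates encoded in Lemma \ref{lem2} (which is precisely the device Theorem \ref{DA3} introduces to trade $\SP$-conservativity against these predicates). Concretely, Lemma \ref{Lem_fp} gives $T\vdash\neg\Prf_T^{(\Delta_1,\Sigma_1)}(\gn{\neg\varphi},\num k)$ for every $k$, and Lemma \ref{lem2}.1 converts these into $T\vdash\neg\Prf_T^{(\Delta_1,\Pi_1)}(\gn{\neg\varphi},\num k)$; collecting a uniform bound $q$ and using monotonicity of the proof predicates yields the $\Sigma_1$ sentence $T\vdash\forall z\le\num q\,\neg\Prf_T^{(\Delta_1,\Pi_1)}(\gn{\neg\varphi},z)$, which $\Sigma_1$-soundness does push down to $\PA$. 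The delicate heart of the argument — and the place I expect to have to be most careful — is to convert this $(\Delta_1,\Pi_1)$-fact back into the $(\Delta_1,\Sigma_1)$-fact needed by the fixed point while remaining inside $\PA$, since the implications supplied by Lemma \ref{lem2} are proved in $T$ rather than in $\PA$; reconciling this (whether by a genuinely $\PA$-provable form of the conversion for the finitely many standard witnesses involved, or by building the conversion into the fixed point as in Theorem \ref{DA3} while retaining the $\Delta_2(\PA)$ complexity) is the crux of the proof. Once clause 3 is secured, clauses 1–3 are exactly the defining conditions in Definition \ref{Def_truple_Delta}, so $T$ is a $\Delta_2(\PA)$-$(\Delta_1,\mathcal{B}(\Sigma_1))^*$ theory.
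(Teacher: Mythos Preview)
Your proposal contains a genuine gap at exactly the place you identify, and the paper closes it by a different, much simpler choice of fixed point that you did not consider. The paper takes
\[
\PA\vdash\varphi\leftrightarrow \PR_T(\gn{\neg\varphi})\preccurlyeq \PR_T^{\Sigma_1\wedge\Pi_1}(\gn{\varphi}),
\]
with the \emph{unrelativized} provability predicate on the left. The point is that $\Prf_T(\gn{\neg\varphi},\num k)$ is $\Delta_1(\PA)$, so its truth value is decided by $\PA$. From the fixed point one gets $T\nvdash\neg\varphi$ (else $T$ is inconsistent via Lemma~\ref{Lem_fp}), hence $\mathbb{N}\models\neg\Prf_T(\gn{\neg\varphi},\num k)$ for every $k$, and therefore $\PA\vdash\forall y\le\num p\,\neg\Prf_T(\gn{\neg\varphi},y)$ for every standard $p$. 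This is precisely the $\Pi_1$ fact you could not transfer from $T$ to $\PA$, now obtained for free because the statement is $\Delta_0$ rather than genuinely $\Pi_1$. Clause~3 then goes through exactly as in Theorem~\ref{Pi}, with no appeal to any conservativity hypothesis at that step.

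Your proposed detours do not close the gap. With your fixed point, $\neg\Prf_T^{(\Delta_1,\Sigma_1)}(\gn{\neg\varphi},\num k)$ is a genuine $\Pi_1$ sentence; for instance if $T=\PA+\Con(\PA)$ then $\neg\Con(\PA)$ is a $\Delta_1(T)$ sentence (being $T$-provably equivalent to $\bot$) from which $T$ trivially proves $\neg\varphi$, so for a suitable $k$ the sentence $\neg\Prf_T^{(\Delta_1,\Sigma_1)}(\gn{\neg\varphi},\num k)$ is $\PA$-equivalent to $\Con(\PA)$, which $\PA$ does not prove. Routing through $\Prf_T^{(\Delta_1,\Pi_1)}$ via Lemma~\ref{lem2} lets you push a $\Sigma_1$ fact down to $\PA$, but you then need to convert back to the $(\Delta_1,\Sigma_1)$ side inside $\PA$, and Lemma~\ref{lem2} only gives that conversion in $T$. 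Building both predicates into the fixed point as in Theorem~\ref{DA3} jeopardizes the $\Delta_2(\PA)$ complexity, since the $(\Delta_1,\Pi_1)$ conjunct contributes a $\Sigma_2$ part to the $\Pi_2$ rewriting. The paper's clause~2 argument is also different from yours: rather than forcing $T\vdash\delta$ via the relativized predicate, it uses $T\nvdash\neg\varphi$ to get $T\nvdash\neg\pi$, hence $\mathbb{N}\models\pi$ by $\Sigma_1$-completeness, and combines this with $\PA\vdash\sigma\vee\neg\pi$ (from $\Sigma_1$-conservativity applied to the trivially $T$-provable $\sigma\vee\neg\pi$) to obtain $\mathbb{N}\models\sigma$ and thus $U\vdash\delta$.
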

\begin{proof}
Suppose that $T$ is $\Sigma_1$-conservative over $\PA$. 
Let $\varphi$ be a $\Sigma_2$ sentence satisfying the following equivalence:
\[
    \PA \vdash \varphi \leftrightarrow \PR_T(\gn{\neg \varphi}) \preccurlyeq \PR_T^{\Sigma_1 \land \Pi_1}(\gn{\varphi}). 
\]
It is shown that $\varphi \in \Delta_2(\PA)$ and $T \nvdash \neg \varphi$. 
Since $\PA \vdash \forall y \leq \num{p}\, \neg \Prf_T(\gn{\varphi}, y)$ for all $p \in \omega$, it is proved that $\neg \varphi \in \HCons(\mathcal{B}(\Sigma_1), T)$. 

We prove $\varphi \in \HCons(\Delta_1, T)$. 
Let $U$ be any subtheory of $T$ and $\delta \in \Delta_1(U)$ be such that $U +  \varphi \vdash \delta$. 
We find $\sigma \in \Sigma_1$ and $\pi \in \Pi_1$ such that $U \vdash (\delta \leftrightarrow \sigma) \land (\delta \leftrightarrow \pi)$. 
We have $T \vdash \sigma \lor \neg \pi$ because $T \vdash \delta \lor \neg \delta$. 
By the $\Sigma_1$-conservativity of $T$ over $\PA$, we have $\PA \vdash \sigma \lor \neg \pi$. 
Then, either $\mathbb{N} \models \sigma$ or $\mathbb{N} \models \neg \pi$. 
It follows from $T \nvdash \neg \varphi$ that $T \nvdash \neg \delta$ and $T \nvdash \neg \pi$. 
Since $\neg \pi$ is $\Sigma_1$, by $\Sigma_1$-completeness, we have $\mathbb{N} \models \pi$. 
Hence $\mathbb{N} \models \sigma$. 
By $\Sigma_1$-completeness again, $U \vdash \sigma$, and thus $U \vdash \delta$. 

We have proved $\Delta_2(\PA) \cap \HDCons(\Delta_1, \mathcal{B}(\Sigma_1); T) \neq \emptyset$. 
By Theorem \ref{H*_Theta_Gamma_B_2}, we conclude that $T$ is a $\Delta_2(\PA)$-$(\Delta_1, \mathcal{B}(\Sigma_1))^*$ theory.  
\end{proof}

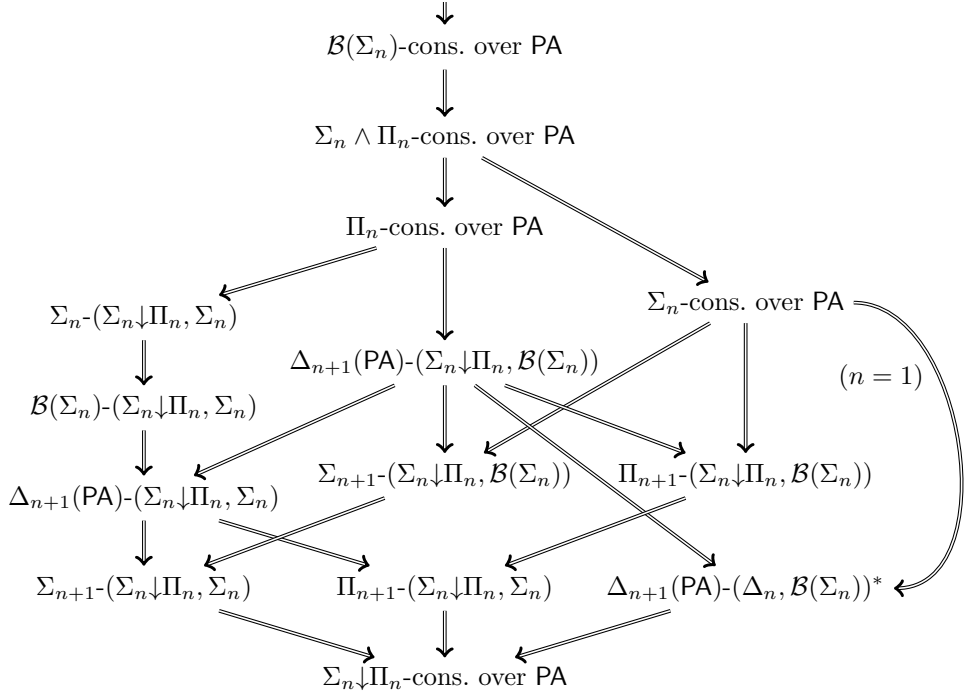
\begin{figure}[ht]
\centering
\begin{tikzpicture}

\node (B) at (0, 8.4) {$\mathcal{B}(\Sigma_n)$-cons.~over $\PA$};
\node (C) at (0, 7.2) {$\Sigma_n \land \Pi_n$-cons.~over $\PA$};

\node (Pi) at (0, 6) {$\Pi_n$-cons.~over $\PA$};
\node (Sigma) at (4, 5) {$\Sigma_n$-cons.~over $\PA$};

\node (SASn) at (-4, 4.8) {$\Sigma_n$-$(\SP, \Sigma_n)$};

\node (BAS) at (-4, 3.6) {$\mathcal{B}(\Sigma_n)$-$(\SP, \Sigma_n)$};
\node (DAB) at (0, 4.2) {$\Delta_{n+1}(\PA)$-$(\SP, \mathcal{B}(\Sigma_n))$};

\node (DAS) at (-4, 2.4) {$\Delta_{n+1}(\PA)$-$(\SP, \Sigma_n)$};
\node (SAB) at (0, 2.7) {$\Sigma_{n+1}$-$(\SP, \mathcal{B}(\Sigma_n))$};
\node (PAB) at (4, 2.7) {$\Pi_{n+1}$-$(\SP, \mathcal{B}(\Sigma_n))$};

\node (SAS) at (-4, 1.2) {$\Sigma_{n+1}$-$(\SP, \Sigma_n)$};
\node (PAS) at (0, 1.2) {$\Pi_{n+1}$-$(\SP, \Sigma_n)$};
\node (DDB) at (4, 1.2) {$\Delta_{n+1}(\PA)$-$(\Delta_n, \mathcal{B}(\Sigma_n))^*$};

\node (A) at (0,0) {$\SP$-cons.~over $\PA$};

\draw [->, double] (0, 9)--(B);
\draw [->, double] (B)--(C);
\draw [->, double] (C)--(Pi);
\draw [->, double] (C)--(Sigma);
\draw [->, double] (Sigma)--(SAB);
\draw [->, double] (Sigma)--(PAB);
\draw [->, double] (Pi)--(SASn);
\draw [->, double] (Pi)--(DAB);

\draw [->, double] (SASn)--(BAS);

\draw [->, double] (BAS)--(DAS);
\draw [->, double] (DAB)--(DAS);
\draw [->, double] (DAB)--(SAB);
\draw [->, double] (DAB)--(PAB);
\draw [->, double] (DAB)--(DDB);

\draw [->, double] (DAS)--(SAS);
\draw [->, double] (DAS)--(PAS);
\draw [->, double] (SAB)--(SAS);
\draw [->, double] (PAB)--(PAS);

\draw [->, double] (SAS)--(A);
\draw [->, double] (PAS)--(A);
\draw [->, double] (DDB)--(A);

\draw [->, double] (Sigma) to [out=0,in=0] (DDB);
\node at (5.8,4) {($n=1$)};

\end{tikzpicture}
\caption{Implications between the conditions}\label{Fig2}
\end{figure}

In the following, we show several non-implications. 

\begin{prop}\label{Prop_easy}\leavevmode
\begin{enumerate}
    \item There exists a theory $T$ such that $T$ is $\SP$-conservative over $\PA$ but is neither $\Pi_n$-conservative over $\PA$ nor $\Sigma_n$-conservative over $\PA$. 

    \item There exists a theory $T$ such that $T$ is $\Pi_n$-conservative over $\PA$ but is not $\Sigma_n$-conservative over $\PA$. 

    \item There exists a theory $T$ such that $T$ is $\Sigma_n$-conservative over $\PA$ but is not $\Pi_n$-conservative over $\PA$. 

    \item There exists a theory $T$ such that $T$ is $\Sigma_n \land \Pi_n$-conservative over $\PA$ but is not $\mathcal{B}(\Sigma_n)$-conservative over $\PA$. 
\end{enumerate}
\end{prop}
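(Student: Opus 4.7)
The plan is to obtain all four counterexamples as extensions of $\PA$ by suitable Solovay-type sentences, so that $\Gamma$-conservativity of the extension over $\PA$ reduces to the corresponding conservation property of the added sentence, while failure of $\Lambda$-conservativity just amounts to the sentence being a non-$\PA$-provable sentence of class $\Lambda$. The key point is that the Solovay-type existence theorems (Fact~\ref{Fact}.1 and Theorem~\ref{Doubly_and_SP}) apply to any consistent c.e.~extension of $\PA$, in particular to $\PA$ itself.

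For (2) and (3), I would apply Fact~\ref{Fact}.1 with the base theory $\PA$: there exist $\sigma\in\Sigma_n\cap\DCons(\Pi_n,\Sigma_n;\PA)$ and, dually, $\pi\in\Pi_n\cap\DCons(\Sigma_n,\Pi_n;\PA)$. Set $T_2:=\PA+\sigma$ and $T_3:=\PA+\pi$. Then $T_2$ is $\Pi_n$-conservative over $\PA$ by definition of $\sigma$, and $T_2\vdash\sigma$ witnesses failure of $\Sigma_n$-conservativity once we know $\PA\nvdash\sigma$, which in turn follows from $\neg\sigma\in\Cons(\Sigma_n,\PA)$ and the consistency of $\PA$. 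The argument for $T_3$ is symmetric.

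For (1), I would apply Theorem~\ref{Doubly_and_SP} with $T=\PA$. Unpacking its proof, we get $\varphi\in\Sigma_n\cap\DCons(\Pi_n,\Sigma_n;\PA)$ and $\psi\in\Pi_n\cap\DCons(\Sigma_n,\Pi_n;\PA+\varphi)$, so that $\varphi\wedge\psi\in\DCons(\SP,\Sigma_n\wedge\Pi_n;\PA)$. Let $T_1:=\PA+\varphi+\psi$. Then $T_1$ is $\SP$-conservative over $\PA$. The witness to failure of $\Sigma_n$-conservativity is $\varphi$ itself: $T_1\vdash\varphi\in\Sigma_n$, while $\PA\nvdash\varphi$ because $\neg\varphi\in\Cons(\Sigma_n,\PA)$ plus consistency of $\PA$. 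Similarly, $\psi$ is independent of $\PA+\varphi$, hence of $\PA$, and witnesses failure of $\Pi_n$-conservativity.

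For (4), I would reuse the same $\varphi$ and $\psi$: by the equivalence $\chi\in\DCons(\Gamma,\Lambda;U)\iff\neg\chi\in\DCons(\Lambda,\Gamma;U)$, the sentence $\neg(\varphi\wedge\psi)\equiv\neg\varphi\vee\neg\psi$ lies in $(\Sigma_n\vee\Pi_n)\cap\DCons(\Sigma_n\wedge\Pi_n,\SP;\PA)$, and in particular is $\Sigma_n\wedge\Pi_n$-conservative over $\PA$; note $\Sigma_n\vee\Pi_n\subseteq\mathcal{B}(\Sigma_n)$. Let $T_4:=\PA+\neg(\varphi\wedge\psi)$. Then $T_4$ is $\Sigma_n\wedge\Pi_n$-conservative over $\PA$, but $T_4\vdash\neg(\varphi\wedge\psi)$ is a $\mathcal{B}(\Sigma_n)$ sentence not provable in $\PA$ (since $\PA+\varphi+\psi$ is consistent by the construction of $\varphi$ and $\psi$), so $T_4$ is not $\mathcal{B}(\Sigma_n)$-conservative over $\PA$. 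There is no genuine obstacle; the only care needed is in extracting the independence claims for $\varphi$ and $\psi$ from Fact~\ref{Fact}.1 and the proof of Theorem~\ref{Doubly_and_SP}, which is routine from the definitions of double conservativity.
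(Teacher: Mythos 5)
Your proof is correct, but it takes a different route from the paper. The paper disposes of all four items by citation: items 2 and 3 invoke Guaspari's theorem that $\Sigma_n \cap \Cons(\Pi_n,\PA)\setminus\Cons(\Sigma_n,\PA)\neq\emptyset$ (and dually), while items 1 and 4 invoke the authors' earlier result \cite[Corollary 9.3]{KK}, which directly supplies a $\Sigma_n\land\Pi_n$ sentence in $\Cons(\SP,\PA)\setminus\bigl(\Cons(\Pi_n,\PA)\cup\Cons(\Sigma_n,\PA)\bigr)$ and a $\mathcal{B}(\Sigma_n)$ sentence in $\Cons(\Sigma_n\wedge\Pi_n,\PA)\setminus\Cons(\mathcal{B}(\Sigma_n),\PA)$. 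You instead derive everything from Fact \ref{Fact}.1 and Theorem \ref{Doubly_and_SP} applied to $T=\PA$, exploiting the doubly conservative structure twice: the first coordinate gives the positive conservation property of the extension, and the second coordinate (conservativity of the negation) combined with the consistency of $\PA$ yields unprovability of the witness, hence failure of the other conservation property. All the small verifications you gesture at do go through: $\neg\sigma\in\Cons(\Sigma_n,\PA)$ forces $\PA\nvdash\sigma$ since otherwise $\PA+\neg\sigma$ proves the false $\Sigma_n$ sentence $\exists x\,(x\neq x)$ and conservativity pushes it down to $\PA$; $\varphi\in\Cons(\Pi_n,\PA)$ gives consistency of $\PA+\varphi$, which then gives $\PA+\varphi\nvdash\psi$ and the consistency of $\PA+\varphi+\psi$ needed for item 4. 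The trade-off is that your argument is self-contained relative to this paper (it uses only results proved or restated here), whereas the paper's one-line proof leans on external references but is correspondingly shorter; neither approach loses anything in strength, since your witness for item 1 is again axiomatized by a single $\Sigma_n\land\Pi_n$ sentence and your witness for item 4 by a single $\Sigma_n\lor\Pi_n$ sentence.
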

\begin{proof}
1. The authors proved \cite[Corollary 9.3]{KK} that there exists a $\Sigma_n \land \Pi_n$ sentence $\varphi$ such that $\varphi \in \Cons(\SP, \PA) \setminus \bigl(\Cons(\Pi_n, \PA) \cup \Cons(\Sigma_n, \PA) \bigr)$. 
Then, $T : = \PA + \varphi$ satisfies the required conditions. 

\medskip

Items 2 and 3 follow from Guaspari's theorem \cite[Theorem 2.4]{Gua} stating that $\Sigma_n \cap \Cons(\Pi_n, \PA) \setminus \Cons(\Sigma_n, \PA) \neq \emptyset$ and $\Pi_n \cap \Cons(\Sigma_n, \PA) \setminus \Cons(\Pi_n, \PA) \neq \emptyset$. 

\medskip

4. This item follows from authors' result \cite[Corollary 9.3]{KK} stating $\mathcal{B}(\Sigma_n) \cap  \Cons(\Sigma_n \wedge \Pi_n, \PA) \setminus \Cons(\mathcal{B}(\Sigma_n), \PA) \neq \emptyset$. 
\end{proof}

The following theorem is a refinement of the third clause of Proposition \ref{Prop_easy}. 

\begin{thm}\label{CE_1}
There exists a theory $T$ satisfying the following conditions: 
\begin{enumerate}
    \item $T$ is a $\Sigma_n$-$(\SP, \Sigma_n)$ theory. 

    \item $T$ is a $\Delta_{n+1}(\PA)$-$(\SP, \mathcal{B}(\Sigma_n))$ theory. 

    \item $T$ is $\Sigma_n$-conservative over $\PA$. 

    \item $T$ is not $\Pi_n$-conservative over $\PA$. 
\end{enumerate}
\end{thm}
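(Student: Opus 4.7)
The plan is to take $T := \PA + \tau$, where $\tau$ is a $\Pi_n$ sentence provided by Fact~\ref{Fact}.1 applied over $\PA$, so that $\tau \in \Cons(\Sigma_n, \PA)$ and $\neg \tau \in \Cons(\Pi_n, \PA)$. With this choice, condition (3) is inherited directly from $\tau \in \Cons(\Sigma_n, \PA)$; condition (4) holds because $T \vdash \tau$ while $\PA + \neg \tau$ is consistent (by $\neg \tau \in \Cons(\Pi_n, \PA)$), so $\PA \nvdash \tau$.

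For (1), I would construct a $\Sigma_n$ sentence $\varphi_1$ by the Solovay-style fixed point
\[
    \PA \vdash \varphi_1 \leftrightarrow \PR_T^{\Sigma_n}(\gn{\neg \varphi_1}) \preccurlyeq \PR_T^{\Pi_n}(\gn{\varphi_1}).
\]
Lemma~\ref{Lem_fp} gives $T \vdash \neg \Prf_T^{\Sigma_n}(\gn{\neg \varphi_1}, \num{k})$ and $T \vdash \neg \Prf_T^{\Pi_n}(\gn{\varphi_1}, \num{k})$ for every $k \in \omega$. The standard small-reflection argument then yields $\neg \varphi_1 \in \HCons(\Sigma_n, T)$: from $U + \neg \varphi_1 \vdash \sigma$ (for a subtheory $U$ of $T$) Proposition~\ref{small_rfn}.2 produces $\PA + \neg \sigma \vdash \Prf_T^{\Pi_n}(\gn{\varphi_1}, \num{p})$ for some $p$, which against Lemma~\ref{Lem_fp} forces $T \vdash \sigma$, whence $\PA \vdash \sigma$ by (3), and so $U \vdash \sigma$.

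The main obstacle is verifying that $T + \varphi_1$ is $\SP$-conservative over $\PA$. Given $T + \varphi_1 \vdash \sigma \land \pi$, the same small-reflection argument yields $T \vdash \pi$ and $T \vdash \neg \varphi_1 \lor \sigma$, hence $T \vdash \sigma \lor \pi$ and therefore $\PA \vdash \neg \tau \lor \sigma \lor \pi$. The delicate step is dropping the $\neg \tau$ disjunct; the plan is to refine the fixed point defining $\varphi_1$ so that the right-hand comparison involves a proof predicate cooperating with $\tau \in \Cons(\Sigma_n, \PA)$ and $\neg \tau \in \Cons(\Pi_n, \PA)$, in the spirit of the mixed comparisons of $\PR_T^{\Sigma_n}$, $\PR_T^{\Pi_n}$, $\PR_T^{(\Delta_n,\Pi_n)}$, and $\PR_T^{\Sigma_n \land \Pi_n}$ used in Theorems~\ref{H*_Sigma_DA_S}, \ref{H*_Gamma_DA_S}, and~\ref{Pi}, so that the small-reflection conclusion at the $T$-level already descends to a $\PA$-provable statement strong enough to yield $\PA \vdash \sigma \lor \pi$.

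Condition (2) is treated by the same recipe with a $\Delta_{n+1}(\PA)$ sentence $\varphi_2$ defined by the Theorem~\ref{Pi}-style fixed point
\[
    \PA \vdash \varphi_2 \leftrightarrow \PR_T^{\Sigma_n}(\gn{\neg \varphi_2}) \preccurlyeq \PR_T^{\Sigma_n \land \Pi_n}(\gn{\varphi_2}).
\]
The hereditary $\mathcal{B}(\Sigma_n)$-conservativity of $\neg \varphi_2$ over $T$ and the $\SP$-conservativity of $T + \varphi_2$ over $\PA$ are established by the same pattern, with the step in Theorem~\ref{Pi} that invoked the $\Pi_n$-conservativity of $T$ (unavailable here) replaced by the combined use of (3) and the double conservativity of $\tau$, exactly as in the analysis of $\varphi_1$.
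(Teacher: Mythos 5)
Your reductions of conditions (3) and (4) to the choice $T := \PA + \tau$ are fine, and your verification that $\neg\varphi_1 \in \HCons(\Sigma_n, T)$ via Lemma \ref{Lem_fp} and small reflection is correct. But the proposal has a genuine gap exactly where you flag it: you arrive at $\PA \vdash \neg\tau \lor \sigma \lor \pi$ and cannot drop the disjunct $\neg\tau$, because the only information descending from $T$ to $\PA$ is $\Sigma_n$-conservativity, while the relevant consequence $\neg(\neg\pi \preccurlyeq \sigma)$ (equivalently $\sigma \lor \pi$) is $\Pi_n$ (resp.\ $\mathcal{B}(\Sigma_n)$). ``Refine the fixed point so that the comparison cooperates with $\tau$'' is not a proof step, and no fixed point taken \emph{relative to a pre-given} $T = \PA + \tau$ is known to close this gap: the obstruction is a property of the theory $T$ itself, not of the witnessing sentence.

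The paper's resolution is structurally different and is the idea you are missing: it does not fix $T$ first. Instead it constructs the $\Pi_n$ axiom $\psi$ of $T = \PA + \psi$ by a fixed point that is intertwined with the witnesses --- $\psi$ refers to $\PR_{\PA}^{\Pi_n}(\gn{\neg\varphi})$ and $\PR_{\PA+\rho}^{\Pi_n}(\gn{\neg\psi})$, while the $\Delta_{n+1}(\PA)$ witness $\varphi$ compares $\PR_{\PA}^{\Sigma_n}(\gn{\neg\varphi})$ against $\PR_{\PA+\psi}^{\Sigma_n\land\Pi_n}(\gn{\varphi})$, mixing provability predicates of \emph{different} theories on the two sides. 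This asymmetry yields by design the conservativity relations your approach lacks: $\psi \in \HCons(\Sigma_n, \PA+\rho)$ (so that $\PA+\psi+\rho \vdash \sigma\land\pi$ descends through $\PA+\rho$ and then through $\rho \in \Cons(\Pi_n,\PA)$ to $\PA \vdash \sigma\lor\pi$), and simultaneously $\PA+\varphi$ is $\Pi_n$-conservative over $\PA+\psi$ and $\Sigma_n$-conservative over $\PA+\neg\psi$, so that $\PA+\varphi \vdash \sigma\land\pi$ gives $\PA+\psi \vdash \pi$ and $\PA+\neg\psi \vdash \sigma$, whence $\PA \vdash \sigma\lor\pi$ by excluded middle on $\psi$. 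Without building $\psi$ self-referentially in this way, the unwanted disjunct in your $\PA \vdash \neg\tau\lor\sigma\lor\pi$ cannot be eliminated, so the proposal does not establish conditions (1) and (2).
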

\begin{proof}
By Fact \ref{Fact}, there exists a $\Sigma_n$ sentence $\rho$ such that $\rho \in \DCons(\Pi_n, \Sigma_n; T)$.
Let $\psi$ be a $\Pi_n$ sentence and $\varphi$ be a $\Delta_{n+1}(\PA)$ sentence satisfying the following equivalences: 
\[
\PA \vdash \psi \leftrightarrow \neg \Bigl(\PR_\PA (\gn{\psi}) \preccurlyeq \bigl(\PR_\PA^{\Pi_n}(\gn{\neg \varphi}) \vee \PR_{\PA + \rho}^{\Pi_n}(\gn{\neg \psi}) \bigr)\Bigr),
\]
\[
\PA \vdash \varphi \leftrightarrow \PR_{\PA}^{\Sigma_n}(\gn{\neg \varphi}) \preccurlyeq \PR_{\PA+ \psi}^{\Sigma_n \wedge \Pi_n}(\gn{\varphi}).
\]

In the following, we verify that the theory $T : = \PA + \psi$ satisfies the required conditions by proving several Claims.

\begin{cl} \label{cl1}
$\PA+ \varphi$ is $\Pi_n$-conservative over $\PA+ \psi$.
\end{cl}
\begin{proof}
Let $\pi$ be a $\Pi_n$ sentence and suppose $\PA+ \varphi \vdash \pi$. 
Since $\PA+ \neg \pi \vdash \neg \varphi$, there exists a natural number $p \in \omega$ such that $\PA + \neg \pi \vdash \Prf_{\PA}^{\Sigma_n}(\gn{\neg \varphi}, \num{p})$.
Since $\PA + \psi + \neg \varphi \vdash \forall z < \num{p} \, \neg \Prf_{\PA + \psi}^{\Sigma_n \wedge \Pi_n}(\gn{\varphi},z)$, we obtain $\PA + \psi + \neg \varphi + \neg \pi \vdash \varphi$, and hence $\PA + \psi + \neg \pi \vdash \varphi$. 
Since $\PA + \varphi \vdash \pi$, it follows that $\PA + \psi \vdash \pi$.
\end{proof}

\begin{cl}\label{cl2}
$\neg \varphi \in \HCons(\mathcal{B}(\Sigma_n), \PA+ \psi)$.
\end{cl}
\begin{proof}
Let $U$ be any subthery of $\PA + \psi$.
Let $\sigma \in \Sigma_n$ and $\pi \in \Pi_n$ be such that $U + \neg \varphi \vdash \sigma \vee \pi$. 
Since $U + \neg \sigma + \neg \pi \vdash \varphi$, there exists a natural number $p \in \omega$ such that $\PA + \neg \sigma + \neg \pi \vdash \Prf_{\PA + \psi}^{\Sigma_n \wedge \Pi_n}(\gn{\varphi}, \num{p})$. 
Since $\PA + \varphi \vdash \forall z \leq \num{p} \, \neg \Prf_{\PA}^{\Sigma_n}(\gn{\neg \varphi})$, we obtain $\PA + \neg \sigma + \neg \pi + \varphi \vdash \neg \varphi$.
Thus, $\PA+ \neg \sigma + \neg \pi \vdash \neg \varphi$ holds.
We conclude $U \vdash \sigma \vee \pi$.
\end{proof}

\begin{cl}\label{cl3}
$\PA \nvdash \psi$.
\end{cl}
\begin{proof}
Suppose, towards a contradiction, that $\PA \vdash \psi$. Then, there exists a natural number $p \in \omega$ such that $\PA \vdash \Prf_{\PA}(\gn{\psi}, \num{p})$.
Since $\PA + \rho +\varphi + \psi \vdash \forall z \leq \num{p} \ \neg \Prf_{\PA}^{\Pi_n}(\gn{\neg \varphi},z) \wedge \neg \Prf_{\PA+ \rho}^{\Pi_n}(\gn{\neg \psi},z)$, we obtain $\PA + \rho + \varphi + \psi \vdash \neg \psi$, which implies $\PA + \rho + \varphi \vdash \neg \psi$.
Since $\PA \vdash \psi$, we have $\PA + \varphi \vdash \neg \rho$.
By Claim \ref{cl1}, we obtain $\PA + \psi \vdash \neg \rho$. 
Then, $\PA \vdash \neg \rho$ because $\PA \vdash \psi$. 
Since $\rho \in \Cons(\Pi_n,\PA)$, this contradicts the consistency of $\PA$.
\end{proof}

Therefore, $\PA + \psi$ is not $\Pi_n$-conservative over $\PA$. 

\begin{cl}\label{cl4}
$\PA+ \varphi$ is $\Sigma_n$-conservative over $\PA + \neg \psi$.
\end{cl}
\begin{proof}
Let $\sigma \in \Sigma_n$ be such that $\PA + \varphi \vdash \sigma$.
Since $\PA + \neg \sigma \vdash \neg \varphi$, there exists a natural number $p \in \omega$ such that $\PA + \neg \sigma \vdash \Prf_{\PA}^{\Pi_n}(\gn{\neg \varphi}, \num{p})$.
It follows from Claim \ref{cl3} that $\PA + \neg \sigma \vdash \psi$, and equivalently, $\PA + \neg \psi \vdash \sigma$.
\end{proof}

\begin{cl}\label{cl5}
$\psi \in \HCons(\Sigma_n, \PA+ \rho)$.
\end{cl}
\begin{proof}
Let $U$ be any subtheory of $\PA + \rho$ and $\sigma \in \Sigma_n$ be such that $U + \psi \vdash \sigma$. 
Then, there exists a natural number $p \in \omega$ such that $\PA + \neg \sigma \vdash \Prf_{\PA + \rho}(\gn{\neg \psi}, \num{p})$. By Claim \ref{cl3}, we obtain $\PA + \neg \sigma \vdash  \psi$.
Thus, $U \vdash \sigma$.
\end{proof}

Thus, in particular, $\PA + \psi$ is $\Sigma_n$-conservative over $\PA$. 

\begin{cl}\label{cl6}
$\PA + \psi$ is a $\Sigma_n$-$(\SP, \Sigma_n)$ theory. 
\end{cl}
\begin{proof}
We prove that the $\Sigma_n$ sentence $\rho$ is a witness of Claim \ref{cl6}. 
By Claim \ref{cl5} and $\rho \in \Cons(\Pi_n,\PA)$, it is proved that $\PA + \psi + \rho$ is $\SP$-conservative over $\PA$ as in the proof of the implication $(1 \Rightarrow 2)$ in Theorem \ref{H*_Theta_Gamma_B}. 

We prove $\neg \rho \in \HCons(\Sigma_n, \PA + \psi)$.
Let $U$ be any subtheory of $\PA + \psi$ and $\sigma \in \Sigma_n$ be such that $U + \neg \rho \vdash \sigma$. 
Then, $\PA + \psi + \neg \rho \vdash \sigma$, and so $\PA + \psi \vdash \rho \vee \sigma$. 
Since $\rho \vee \sigma \in \Sigma_n$, by Claim \ref{cl5}, we obtain $\PA \vdash \rho \vee \sigma$.
Since $\PA + \neg \rho \vdash \sigma$ and $\neg \rho \in \Cons(\Sigma_n,\PA)$, it follows that $\PA \vdash \sigma$, and hence $U \vdash \sigma$.
\end{proof}

\begin{cl}\label{cl7}
$\PA + \psi$ is a $\Delta_{n+1}(\PA)$-$(\SP, \mathcal{B}(\Sigma_n))$ theory. 
\end{cl}
\begin{proof}
We prove that the $\Delta_{n+1}(\PA)$ sentence $\varphi$ is a witness of Claim \ref{cl7}.
At first, we prove $\PA + \varphi \vdash \psi$.
By Claim \ref{cl2} and $\PA + (\psi \vee \varphi) + \neg \varphi \vdash \psi$, we obtain $\PA + (\psi \vee \varphi) \vdash \psi$ because $\PA + (\psi \lor \varphi)$ is a subtheory of $\PA + \psi$. 
It follows that $\PA + \varphi \vdash \psi$.

Since $\neg \varphi \in \HCons(\mathcal{B}(\Sigma_n),\PA + \psi)$ holds by Claim \ref{cl2}, it suffices to prove thta $\PA + \psi + \varphi$ is $\SP$-conservative over $\PA$.
Let $\sigma \in \Sigma_n$ and $\pi \in \Pi_n$ be such that $\PA + \psi + \varphi \vdash \sigma \wedge \pi$. 
Since $\PA + \varphi \vdash \psi$, we have $\PA + \varphi \vdash \sigma \wedge \pi$. 
By Claims \ref{cl1} and \ref{cl4}, it follows that $\PA + \psi \vdash \pi$ and $\PA + \neg \psi \vdash \sigma$. 
Thus, we obtain $\PA \vdash \sigma \vee \pi$.
\end{proof}

We have completed our proof of Theorem \ref{CE_1}.
\end{proof}

We close this subsection by examining the $n=1$ case. 

\begin{prop}\label{CE_2}
For any theory $T$ with $T \vdash \PA + \Con(\PA)$, the following are equivalent: 
\begin{enumerate}
    \item $T$ is $\Sigma_1{\downarrow}\Pi_1$-conservative over $\PA$. 

    \item $T$ is $\Sigma_1$-conservative over $\PA$. 
\end{enumerate}
\end{prop}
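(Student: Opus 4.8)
The direction $(2)\Rightarrow(1)$ needs no hypothesis on $\Con(\PA)$ and is immediate from Fact~\ref{Fact_DA}.1, since $\Sigma_1$-conservativity over $\PA$ implies $\SP$-conservativity over $\PA$. So the whole content lies in $(1)\Rightarrow(2)$, which I would prove by contraposition. The first move is to rephrase $(2)$: because $\PA$ is $\Sigma_1$-complete and $\Sigma_1$-sound, a $\Sigma_1$ sentence $\sigma$ satisfies $\PA\vdash\sigma$ iff $\mathbb{N}\models\sigma$, so ``$T$ is $\Sigma_1$-conservative over $\PA$'' means exactly ``$T$ is $\Sigma_1$-sound''. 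Hence I assume $T$ is $\Sigma_1$-unsound, fix a $\Sigma_1$ sentence $\sigma_0$ with $T\vdash\sigma_0$ and $\mathbb{N}\models\neg\sigma_0$ (the falsity being automatic from $\PA\nvdash\sigma_0$), and aim to refute $(1)$ by producing $\sigma\in\Sigma_1$ and $\pi\in\Pi_1$ with $T\vdash\sigma\wedge\pi$ but $\PA\nvdash\sigma\vee\pi$.

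The natural $\Pi_1$ component is $\pi:=\Con(\PA)$, a $\Pi_1$ theorem of $T$ by assumption, so the task reduces to finding a $\Sigma_1$ theorem $\sigma$ of $T$ with $\PA+\neg\Con(\PA)\nvdash\sigma$. I would take $\sigma$ to be the witness comparison $\sigma_0\prec\neg\Con(\PA)$, which is legitimate since $\neg\Con(\PA)$ has the form $\neg\forall y\,\neg\Prf_\PA(\cdot,y)$, and which asserts that a witness of $\sigma_0$ occurs strictly before any $\PA$-proof of a contradiction. That $T\vdash\sigma$ is clean: from $T\vdash\sigma_0$ and $T\vdash\Con(\PA)$ we get $T\vdash\sigma_0\wedge\neg(\neg\Con(\PA))$, whence $T\vdash\sigma_0\prec\neg\Con(\PA)$ by Proposition~\ref{wc}.4. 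Since $\PA\vdash(\sigma_0\prec\neg\Con(\PA))\to\sigma_0$ and $\sigma_0$ is false, $\sigma$ is also false, and by construction $T\vdash\sigma\wedge\Con(\PA)$.

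The hard part is verifying $\PA+\neg\Con(\PA)\nvdash\sigma$, which is precisely where $T\vdash\Con(\PA)$ must be used essentially. The danger is genuine: for ``generic'' false $\Sigma_1$ theorems (such as $\neg\Con(\PA+\Con(\PA))$, available on theories witnessing Proposition~\ref{Prop_easy}) one has $\PA\vdash\sigma_0\vee\Con(\PA)$, because such $\sigma_0$ already follow from $\neg\Con(\PA)$ over $\PA$ by provable $\Sigma_1$-completeness, so no violation results. The comparison $\sigma_0\prec\neg\Con(\PA)$ is designed to defeat this obstruction, the intuition being that the inconsistency of $\PA$ cannot retroactively manufacture an $\alpha_0$-witness lying below the least proof of a contradiction; concretely one wants a model of $\PA+\neg\Con(\PA)$ whose least contradiction-proof precedes every witness of $\sigma_0$, i.e.\ in which $\neg\Con(\PA)\preccurlyeq\sigma_0$ (the negation of $\sigma$, by Proposition~\ref{wc}). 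Establishing this is the crux, and I expect the clean route to be to build $\sigma$ from $\sigma_0$ through a Rosser-style fixed point, in the spirit of the construction in the proof of Theorem~\ref{Doubly_B_B} and the small-reflection computations of Lemma~\ref{Lem_fp}, so that the required ordering of witnesses is forced by the fixed-point equation rather than read off from the given $\sigma_0$; should the $\Con(\PA)$-pairing prove insufficient in some configuration, the same fixed-point idea would be used to manufacture the $\Pi_1$ partner $\pi$ as well. Once $\PA\nvdash\sigma\vee\pi$ is secured, the pair $(\sigma,\pi)$ witnesses that $T$ is not $\SP$-conservative over $\PA$, completing the contrapositive.
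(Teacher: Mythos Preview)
Your contrapositive outline is reasonable and the pair $(\sigma_0 \prec \neg\Con(\PA),\, \Con(\PA))$ is a natural first guess, but the argument stops at exactly the point you yourself flag as ``the hard part'': you never prove $\PA + \neg\Con(\PA) \nvdash \sigma_0 \prec \neg\Con(\PA)$, and it is not clear this holds for an arbitrary false $T$-provable $\Sigma_1$ sentence $\sigma_0$. Unwinding via Proposition~\ref{wc}, the claim amounts to $\PA \nvdash \neg(\neg\Con(\PA) \preccurlyeq \sigma_0)$; this $\Pi_1$ sentence is true (vacuously), so soundness gives no leverage, and you offer no argument. You then say a Rosser-style fixed point ought to do the job, but you do not construct one, so as written the proof is incomplete precisely at its crux.

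The paper resolves the implication directly rather than by contraposition, and the fixed point it uses is an FGH-type sentence rather than a tweak of your witness comparison. From any $\sigma$ with $T \vdash \sigma$ it takes $\psi$ with $\PA \vdash \psi \leftrightarrow \PR_{\PA}(\gn{\neg\psi}) \preccurlyeq \sigma$ and sets $\psi^\bot := \sigma \prec \PR_{\PA}(\gn{\neg\psi})$. The hypotheses $T \vdash \sigma$ and $T \vdash \Con(\PA)$ (whence $T \vdash \PR_{\PA}(\gn{\neg\psi}) \to \neg\psi$ via $\RFN_{\Pi_1}(\PA)$) give $T \vdash \psi^\bot \land \neg\psi$; $\Sigma_1{\downarrow}\Pi_1$-conservativity then yields $\PA \vdash \psi^\bot \lor \neg\psi$, hence $\PA \vdash \neg\psi$, hence $\PA \vdash \PR_{\PA}(\gn{\neg\psi})$. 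The FGH identity $\PA \vdash \PR_{\PA}(\gn{\neg\psi}) \leftrightarrow (\sigma \lor \neg\Con(\PA))$ now gives $\PA + \Con(\PA) \vdash \sigma$, and $\Sigma_1$-conservativity of $\Con(\PA)$ over $\PA$ finishes. The fixed point is what converts $\PA \vdash \neg\psi$ into exactly the statement $\PA \vdash \sigma \lor \neg\Con(\PA)$ that your bare comparison $\sigma_0 \prec \neg\Con(\PA)$ was aiming for but could not certify on its own.
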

\begin{proof}
It suffices to show the implication $(1 \Rightarrow 2)$ holds. 
Suppose that $T \vdash \PA + \Con(\PA)$ is $\Sigma_1 {\downarrow} \Pi_1$-conservative over $\PA$. 
Let $\sigma \in \Sigma_1$ be such that $T \vdash \sigma$. 
Let $\psi$ be a $\Sigma_1$ sentence satisfying the following equivalence:
\[
    \PA \vdash \psi \leftrightarrow \PR_{\PA}(\gn{\neg \psi}) \preccurlyeq \sigma. 
\]
Let $\psi^\bot$ be the $\Sigma_1$ sentence $\sigma \prec \PR_{\PA}(\gn{\neg \psi})$. 
Then, $\PA$ proves $(\sigma \lor \neg \Con(\PA) )\leftrightarrow \PR_{\PA}(\gn{\neg \psi})$ and $\sigma \to \psi \lor \psi^\bot$ (cf.~the FGH Theorem \cite{Kur23}).
Since $\PA + \Con(\PA) \vdash \RFN_{\Pi_1}(\PA)$, we have $\PA + \Con(\PA) \vdash \PR_{\PA}(\gn{\neg \psi}) \to \neg \psi$. 
By combining these things, we obtain $T \vdash \psi^\bot \land \neg \psi$. 
By the $\Sigma_1{\downarrow}\Pi_1$-conservativity of $T$ over $\PA$, we have $\PA \vdash \psi^\bot \lor \neg \psi$. 
Since $\PA \vdash \psi^\bot \to \neg \psi$, we get $\PA \vdash \neg \psi$ and thus $\PA \vdash \PR_{\PA}(\gn{\neg \psi})$. 
We have $\PA + \Con(\PA) \vdash \sigma$. 
Since $\Con(\PA)$ is $\Sigma_1$-conservative over $\PA$, we conclude $\PA \vdash \sigma$. 
\end{proof}

\begin{prop}\label{CE_3}
Every theory $T$ containing $\PA + \Con(\PA)$ is not a $\Delta_2(\PA)$-$(\Sigma_1{\downarrow}\Pi_1, \Sigma_1)$ theory. 
\end{prop}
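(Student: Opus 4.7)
The plan is to argue by contradiction: suppose $T\supseteq\PA+\Con(\PA)$ is a $\Delta_{2}(\PA)$-$(\Sigma_1{\downarrow}\Pi_1,\Sigma_1)$ theory, witnessed by a $\Delta_{2}(\PA)$ sentence $\varphi$ with $T+\varphi$ being $\Sigma_1{\downarrow}\Pi_1$-conservative over $\PA$ and $\neg\varphi\in\HCons(\Sigma_1,T)$. I would then show that $\varphi$ is in fact doubly $(\Sigma_1,\Sigma_1)$-conservative over $\PA$, and invoke Fact \ref{Fact}.3 (applied with $T:=\PA$ and $n=1$) together with the $\Sigma_1$-soundness of $\PA$ to conclude $\Delta_{2}(\PA)\cap\DCons(\Sigma_1,\Sigma_1;\PA)=\emptyset$, giving the required contradiction.

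To extract doubly $(\Sigma_1,\Sigma_1)$-conservativity over $\PA$, the first step is to upgrade the $\Sigma_1{\downarrow}\Pi_1$-conservativity of $T+\varphi$ into $\Sigma_1$-conservativity. Here the hypothesis $T\vdash\PA+\Con(\PA)$ enters crucially: since $T+\varphi$ also contains $\PA+\Con(\PA)$, Proposition \ref{CE_2} applies to $T+\varphi$ in place of $T$ and gives that $T+\varphi$ is $\Sigma_1$-conservative over $\PA$. Restricting to the subtheory $\PA+\varphi\subseteq T+\varphi$ preserves this property, so $\varphi\in\Cons(\Sigma_1,\PA)$. For the other direction, the hereditary hypothesis on $\neg\varphi$ gives $\neg\varphi\in\Cons(\Sigma_1,U)$ for every $U$ with $T\vdash U\vdash\PA$; taking $U=\PA$ yields $\neg\varphi\in\Cons(\Sigma_1,\PA)$. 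Combining the two yields $\varphi\in\Delta_{2}(\PA)\cap\DCons(\Sigma_1,\Sigma_1;\PA)$, closing the contradiction with Fact \ref{Fact}.3.

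Since both ingredients (Proposition \ref{CE_2} and Fact \ref{Fact}.3) are already available, there is no substantive obstacle; the whole argument is the straightforward observation that $\Con(\PA)$ is exactly strong enough to collapse $\Sigma_1{\downarrow}\Pi_1$-conservativity to $\Sigma_1$-conservativity and thereby push the witness down to a doubly $(\Sigma_1,\Sigma_1)$-conservative $\Delta_{2}(\PA)$ sentence over $\PA$, which cannot exist.
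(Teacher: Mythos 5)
Your proposal is correct and follows essentially the same route as the paper: both apply Proposition \ref{CE_2} to $T+\varphi$ to upgrade $\Sigma_1{\downarrow}\Pi_1$-conservativity to $\Sigma_1$-conservativity and then derive a contradiction from the non-existence of a doubly $(\Sigma_1,\Sigma_1)$-conservative $\Delta_2(\PA)$ sentence. The only cosmetic difference is that the paper concludes $\varphi\in\HDCons(\Sigma_1,\Sigma_1;T)$ and cites Proposition \ref{HDCons_S_S}, whereas you specialize directly to $U=\PA$ and cite Fact \ref{Fact}.3 --- but Proposition \ref{HDCons_S_S} is itself proved by exactly that specialization, so the arguments coincide.
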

\begin{proof}
    Suppose, towards a contradiction, that $T \vdash \PA + \Con(\PA)$ and $T$ is a $\Delta_2(\PA)$-$(\Sigma_1{\downarrow}\Pi_1, \Sigma_1)$ theory. 
    Then, we find a $\Delta_2(\PA)$ sentence $\varphi$ such that $T + \varphi$ is $\Sigma_1{\downarrow}\Pi_1$-conservative over $\PA$ and $\neg \varphi \in \HCons(\Sigma_1, T)$. 
    Since $T + \varphi$ is an extension of $\PA + \Con(\PA)$, by Proposition \ref{CE_2}, $T + \varphi$ is $\Sigma_1$-conservative over $\PA$. 
    In particular, $\varphi$ is in $\Delta_2(\PA) \cap \HDCons(\Sigma_1, \Sigma_1; T)$. 
    This contradicts Proposition \ref{HDCons_S_S}. 
\end{proof}

The following corollary is a refinement of the second clause of Proposition \ref{Prop_easy} in the $n=1$ case. 

\begin{cor}\label{Cor_1}
There exists a theory $T$ satisfying the following conditions: 
\begin{enumerate}
    \item $T$ is $\Sigma_1$-conservative over $\PA$. 

    \item $T$ is not a $\Delta_2(\PA)$-$(\Sigma_1{\downarrow}\Pi_1, \Sigma_1)$ theory. 
\end{enumerate}
\end{cor}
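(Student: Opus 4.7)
The plan is to use Proposition~\ref{CE_3} directly, by taking $T := \PA + \Con(\PA)$ as the witness. Under the standing metamathematical assumption that $\mathbb{N} \models \PA$, the sentence $\Con(\PA)$ is true, so this $T$ is a consistent c.e.~extension of $\PA$ in the sense of the paper's convention, and moreover $\mathbb{N}$ is a model of $T$.

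Condition~(2) will be immediate: since $T$ trivially contains $\PA + \Con(\PA)$, Proposition~\ref{CE_3} at once yields that $T$ is not a $\Delta_2(\PA)$-$(\Sigma_1{\downarrow}\Pi_1, \Sigma_1)$ theory.

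For condition~(1), I would argue as follows. Because $\mathbb{N}$ is a model of $T$, $T$ is $\Sigma_1$-sound. Given any $\Sigma_1$ sentence $\sigma$ with $T \vdash \sigma$, $\Sigma_1$-soundness of $T$ gives $\mathbb{N} \models \sigma$, and $\Sigma_1$-completeness of $\PA$ then yields $\PA \vdash \sigma$. Hence $T$ is $\Sigma_1$-conservative over $\PA$.

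There is essentially no obstacle at this final step: all the substantive work has already been packaged into Proposition~\ref{CE_3}, which in turn rests on the collapse of $\Sigma_1{\downarrow}\Pi_1$-conservativity into $\Sigma_1$-conservativity over extensions of $\PA+\Con(\PA)$ (Proposition~\ref{CE_2}) together with the non-existence result Proposition~\ref{HDCons_S_S}. The only small observation to make is that $\PA + \Con(\PA)$ itself already witnesses the corollary, with no need to construct a more elaborate theory; once this witness is chosen, both properties fall out in a handful of lines.
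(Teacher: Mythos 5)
Your proposal is correct and matches the paper's own proof: the paper also takes $T := \PA + \Con(\PA)$, derives $\Sigma_1$-conservativity over $\PA$ from $\Sigma_1$-soundness (via $\Sigma_1$-completeness, exactly as you spell out), and applies Proposition \ref{CE_3} for the second condition.
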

\begin{proof}
Let $T: = \PA + \Con(\PA)$. 
Since $T$ is $\Sigma_1$-sound, it is shown that $T$ is $\Sigma_1$-conservative over $\PA$. 
By Proposition \ref{CE_3}, $T$ is not a $\Delta_2(\PA)$-$(\Sigma_1{\downarrow}\Pi_1, \Sigma_1)$ theory. 
\end{proof}

We obtain a corollary stating that the converse implication of Proposition \ref{Prop_triple} is not the case in general.

\begin{cor}\label{Cor_triple}
    There exists a theory $T$ such that $\Sigma_1 \cap \HDCons(\Sigma_1 {\downarrow} \Pi_1, \Sigma_1; T) \neq \emptyset$ but $T$ is not a $\Sigma_1$-$(\Sigma_1 {\downarrow} \Pi_1, \Sigma_1)$ theory. 
\end{cor}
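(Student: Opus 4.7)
The plan is to take $T := \PA + \Con(\PA)$, the same theory analyzed in Corollary \ref{Cor_1} and Proposition \ref{CE_3}, and verify that it witnesses both clauses of the corollary.

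For the positive clause $\Sigma_1 \cap \HDCons(\Sigma_1 {\downarrow} \Pi_1, \Sigma_1; T) \neq \emptyset$, I would first note that $\Con(\PA)$ is true in $\mathbb{N}$, so $T$ is $\Sigma_1$-sound and therefore $\Sigma_1$-conservative over $\PA$; by Fact \ref{Fact_DA}.1 this makes $T$ $\Sigma_1 {\downarrow} \Pi_1$-conservative over $\PA$. Applying Fact \ref{DA0} (direction $1 \Rightarrow 2$) then yields a $\Sigma_1$ sentence $\varphi \in \HDCons(\Pi_1, \Sigma_1; T)$. Because $\Pi_1$-conservativity implies $\Sigma_1 {\downarrow} \Pi_1$-conservativity (Fact \ref{Fact_DA}.2), the same $\varphi$ lies in $\HDCons(\Sigma_1 {\downarrow} \Pi_1, \Sigma_1; T)$, supplying the required witness.

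For the negative clause, I would invoke Proposition \ref{CE_3}, which asserts that any extension of $\PA + \Con(\PA)$ -- in particular $T$ itself -- is not a $\Delta_2(\PA)$-$(\Sigma_1 {\downarrow} \Pi_1, \Sigma_1)$ theory. Since every $\Sigma_1$ sentence is trivially $\Delta_2(\PA)$ (it is $\PA$-equivalent to itself, and also $\PA$-equivalent to a $\Pi_2$ formula obtained by prepending a dummy universal quantifier), any hypothetical $\Sigma_1$ witness to $T$ being a $\Sigma_1$-$(\Sigma_1 {\downarrow} \Pi_1, \Sigma_1)$ theory would simultaneously witness the $\Delta_2(\PA)$ version -- contradicting Proposition \ref{CE_3}. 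Hence $T$ is not a $\Sigma_1$-$(\Sigma_1 {\downarrow} \Pi_1, \Sigma_1)$ theory.

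No step of this argument poses a real obstacle: the corollary is essentially a bookkeeping consequence of Proposition \ref{CE_3} and Fact \ref{DA0}, bridged by the inclusions $\Sigma_1 \subseteq \Delta_2(\PA)$ (which transports the nonexistence result downward) and $\HCons(\Pi_1, \cdot) \subseteq \HCons(\Sigma_1 {\downarrow} \Pi_1, \cdot)$ (which transports the existence result from the stronger class $\Pi_1$-conservativity to the weaker class $\Sigma_1 {\downarrow} \Pi_1$-conservativity).
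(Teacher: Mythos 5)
Your proposal is correct and follows essentially the same route as the paper: the paper simply packages the choice $T=\PA+\Con(\PA)$ and its two properties into Corollary \ref{Cor_1} and then cites it, whereas you unfold that corollary and argue directly from $\Sigma_1$-soundness, Fact \ref{DA0}, and Proposition \ref{CE_3}. The two bridging observations you make explicit --- $\Sigma_1 \subseteq \Delta_2(\PA)$ and the passage from $\HDCons(\Pi_1,\Sigma_1;T)$ to $\HDCons(\Sigma_1{\downarrow}\Pi_1,\Sigma_1;T)$ via Fact \ref{Fact_DA} --- are exactly the ones the paper uses implicitly.
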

\begin{proof}
Let $T$ be a theory guaranteed by Corollary \ref{Cor_1}. 
Since $T$ is $\Sigma_1{\downarrow}\Pi_1$-conservative over $\PA$, by Fact \ref{DA0}, we have $\Sigma_1 \cap \HDCons(\Sigma_1 {\downarrow} \Pi_1, \Sigma_1; T) \neq \emptyset$. 
Since $\Sigma_1$-$(\Sigma_1{\downarrow}\Pi_1, \Sigma_1)$ implies $\Delta_2(\PA)$-$(\Sigma_1{\downarrow}\Pi_1, \Sigma_1)$, we have that $T$ is not a $\Sigma_1$-$(\Sigma_1{\downarrow}\Pi_1, \Sigma_1)$ theory. 
\end{proof}

Finally, we show that $\Delta_2(\PA)$-$(\Delta_1, \mathcal{B}(\Sigma_1))^*$ is strictly weaker than $\Delta_2(\PA)$-$(\Sigma_1{\downarrow}\Pi_1, \mathcal{B}(\Sigma_1))$ (cf.~Definition \ref{Def_truple_Delta}). 

\begin{cor}\label{Cor_triple2}
    There exists a theory $T$ such that $T$ is a $\Delta_2(\PA)$-$(\Delta_1, \mathcal{B}(\Sigma_1))^*$ theory but is not a $\Delta_2(\PA)$-$(\Sigma_1{\downarrow}\Pi_1, \mathcal{B}(\Sigma_1))$ theory. 
\end{cor}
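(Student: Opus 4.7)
The plan is to take exactly the same witness as in Corollary \ref{Cor_1}, namely $T := \PA + \Con(\PA)$, and argue that this theory already separates the two conditions.

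First, I would verify that $T$ is a $\Delta_2(\PA)$-$(\Delta_1, \mathcal{B}(\Sigma_1))^*$ theory. Since $\PA + \Con(\PA)$ is $\Sigma_1$-sound, it is $\Sigma_1$-conservative over $\PA$. Hence Proposition \ref{S1-sound} applies directly and yields a $\Delta_2(\PA)$ sentence $\varphi$ witnessing the $\Delta_2(\PA)$-$(\Delta_1, \mathcal{B}(\Sigma_1))^*$ property. This step is immediate from the previously established results.

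Next, I would show that $T$ is not a $\Delta_2(\PA)$-$(\Sigma_1 {\downarrow} \Pi_1, \mathcal{B}(\Sigma_1))$ theory. The key observation is that since every $\Sigma_1$ sentence is (trivially) in $\mathcal{B}(\Sigma_1)$, we have the inclusion $\HCons(\mathcal{B}(\Sigma_1), T) \subseteq \HCons(\Sigma_1, T)$ (stronger conservativity implies weaker conservativity). Consequently, any witness of the $\Delta_2(\PA)$-$(\Sigma_1 {\downarrow} \Pi_1, \mathcal{B}(\Sigma_1))$ property is also a witness of the $\Delta_2(\PA)$-$(\Sigma_1 {\downarrow} \Pi_1, \Sigma_1)$ property. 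Hence if $T$ were the former, it would also be the latter. But Proposition \ref{CE_3} tells us that no extension of $\PA + \Con(\PA)$ can be a $\Delta_2(\PA)$-$(\Sigma_1 {\downarrow} \Pi_1, \Sigma_1)$ theory, giving the desired contradiction.

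There is no real obstacle here: both halves are short deductions from previously proved statements, and the proof reduces to observing the monotonicity $\HCons(\mathcal{B}(\Sigma_1), T) \subseteq \HCons(\Sigma_1, T)$ and then invoking Propositions \ref{S1-sound} and \ref{CE_3}. The only point to be careful about is to record explicitly that the definition of $\Theta$-$(\Gamma, \Lambda)$ theory becomes \emph{weaker} as $\Lambda$ \emph{decreases} in the sense of set inclusion of sentence classes, so the implication from the $\mathcal{B}(\Sigma_1)$ version to the $\Sigma_1$ version goes in the required direction.
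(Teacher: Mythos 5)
Your proposal is correct and follows essentially the same route as the paper: the paper also takes $T = \PA + \Con(\PA)$ (via Corollary \ref{Cor_1}), applies Proposition \ref{S1-sound} for the positive half, and uses the monotonicity that $\Delta_2(\PA)$-$(\Sigma_1{\downarrow}\Pi_1, \mathcal{B}(\Sigma_1))$ implies $\Delta_2(\PA)$-$(\Sigma_1{\downarrow}\Pi_1, \Sigma_1)$ together with Proposition \ref{CE_3} for the negative half. The only cosmetic difference is that you invoke Proposition \ref{CE_3} directly where the paper routes through Corollary \ref{Cor_1}.
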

\begin{proof}
Let $T$ be a theory guaranteed by Corollary \ref{Cor_1}. 
By Proposition \ref{S1-sound}, $T$ is a $\Delta_2(\PA)$-$(\Delta_1, \mathcal{B}(\Sigma_1))^*$ theory. 
Since $\Delta_2(\PA)$-$(\Sigma_1{\downarrow}\Pi_1, \mathcal{B}(\Sigma_1))$ implies $\Delta_2(\PA)$-$(\Sigma_1{\downarrow}\Pi_1, \Sigma_1)$, we conclude that $T$ is not a $\Delta_2(\PA)$-$(\Sigma_1{\downarrow}\Pi_1, \mathcal{B}(\Sigma_1))$ theory. 
\end{proof}

\section*{Acknowledgments}

The second author was supported by JSPS KAKENHI Grant Number JP23K03200.

\bibliographystyle{plain}
\bibliography{ref}

\end{document}